\newlength\friezelen 
\newcolumntype{Q}{>{\centering}p{\friezelen}<{}}
\tikzset {->-/.style={decoration={markings, mark=at position .5 with {\arrow{latex}}}, postaction={decorate}}}
\tikzset {-->-/.style={decoration={markings, mark=at position .5 with {\arrow[scale=2]{latex}}}, postaction={decorate}}}
\newcommand{\midarrow}{\tikz \draw[-triangle 90] (0,0) -- +(.1,0);}
\newcommand{\midrevarrow}{\tikz \draw[-triangle 90] (0,0) -- +(-.1,0);}
\patchcmd{\@settitle}{\uppercasenonmath\@title}{}{}{}
\patchcmd{\@setauthors}{\MakeUppercase}{}{}{}
\patchcmd{\section}{\scshape}{}{}{}
\@date \else {\vskip2ex %vertical position of date
  \centering\footnotesize\@date\par\vskip1ex}\fi
\else \@footnotetext{\@setdate}\fi}
\theoremstyle{plain}
\newtheorem{theorem}{Theorem}[section]
\newtheorem{lemma}[theorem]{Lemma}
\newtheorem{prop}[theorem]{Proposition}
\newtheorem{conj}[theorem]{Conjecture}
\newtheorem{question}[theorem]{Question}
\newtheorem{corollary}[theorem]{Corollary}
\theoremstyle{definition}
\newtheorem{remark}[theorem]{Remark}
\newtheorem{example}[theorem]{Example}
\newtheorem{definition}[theorem]{Definition}
\DeclareMathOperator{\Hom}{Hom}
\DeclareMathOperator{\psl}{PSL}
\DeclareMathOperator{\rr}{\mathbb{R}}
\DeclareMathOperator{\wt}{wt}
\DeclareMathOperator{\twt}{twt}
\DeclareMathOperator{\crs}{cross}
\title{Double Dimer Covers on Snake Graphs from Super Cluster Expansions}
\author{Gregg Musiker}
\author{Nicholas Ovenhouse}
\author{Sylvester W. Zhang}
\thanks{School of Mathematics, University of Minnesota, Minneapolis, MN 55455, USA}
\thanks{\emph{Email}:
\href{mailto:musiker@umn.edu}{musiker@umn.edu},
\href{mailto:ovenh001@umn.edu}{ovenh001@umn.edu},
\href{mailto:swzhang@umn.edu}{swzhang@umn.edu}}
\date{}
\begin{document}
\date{October 13, 2021}

\begin{abstract}
    In a recent paper, the authors gave combinatorial formulas for the Laurent expansions
    of super $\lambda$-lengths in a marked disk, generalizing Schiffler's $T$-path formula.
    In the present paper, we give an alternate combinatorial expression for these
    super $\lambda$-lengths in terms of double dimer covers 
    on snake graphs.  This generalizes the dimer formulas of Musiker, Schiffler, and Williams.
\end{abstract}

\maketitle
\setcounter{tocdepth}{1}
\tableofcontents
\setlength{\parindent}{0em}
\setlength{\parskip}{0.618em}

\tableofcontents

\section*{Introduction}

A \emph{marked surface} is a surface $S$ with boundary, with a collection of marked points $M \subset \partial S$ (we require at least one marked point per
boundary component). Although it is common in the literature to also consider interior marked points (i.e. ``punctures''), we will only consider
unpunctured surfaces. It is well-known in the theory of cluster algebras that the \emph{decorated Teichm\"{u}ller space} of $S$ possesses 
a cluster structure \cite{gsv_05} \cite{fg_06} \cite{fst_08}. Given a fixed triangulation, with arcs terminating at marked boundary points,
the collection of \emph{$\lambda$-lengths} gives a set of cluster coordinates.
In particular, this means that the $\lambda$-length of any geodesic arc between marked points is expressible as a Laurent polynomial in these coordinates.
The ``flip'' of a triangulation corresponds to cluster mutation, realized by the hyperbolic version of Ptolemy's relation.

There have been many combinatorial ways to enumerate the terms occurring in these Laurent expansions. 
One of the earliest was the notion of \emph{$T$-paths} \cite{schiffler_08} \cite{st_09}.
A few years later, a dimer (perfect matching) interpretation was given \cite{ms10} \cite{msw_11}. 
There have been other combinatorial models (e.g. \cite{yurikusa_19}, \cite{llm_15}, \cite{claussen20}), but these two will be our main focus
in this article.

In recent years, there have been efforts to find supersymmetric generalizations of both cluster algebras and decorated Teichm\"{u}ller spaces. On the one hand,
there have been a couple different attempts to define super cluster algebras formally (see \cite{lmrs_21} and \cite{o_15, OS19}). There is also a supersymmetric version
of frieze patterns (which are related to cluster combinatorics) \cite{M-GOT15}. On the other hand, in a more geometric setting, Penner and Zeitlin have recently introduced
\emph{decorated super-Teichm\"{u}ller spaces} \cite{pz_19}.

In a recent paper, the current authors took a first step in unifying the algebraic, combinatorial, and geometric viewpoints mentioned above \cite{moz21}. 
In particular, we found an analogue of the $T$-path Laurent formula for decorated super-Teichm\"{u}ller spaces.
We also showed that the collection of super $\lambda$-lengths and $\mu$-invariants fit into a super frieze
pattern, as in \cite{M-GOT15}. Lastly, we explored the connection between our approach and the definitions of super cluster algebras given in \cite{OS19}.

The main purpose of the present paper is to give a super analogue of the dimer formulas from \cite{ms10}. Surprisingly, the terms in the super Laurent polynomials
are more naturally expressed in terms of \emph{double dimer covers} (rather than ordinary dimer covers). We also give double dimer formulas for some (but not all) of
the odd variables corresponding to triangles. Later sections of the paper give other combinatorial interpretations which can be derived from the double dimer model.
The structure of the paper is as follows. 

\vspace{-1.5em}
\begin {itemize}
    \item In Section 1, we review background on decorated super-Teichm\"{u}ller theory of \cite{pz_19}.

    \item In Section 2, we recall some definitions and conventions from our previous paper \cite{moz21} regarding
          triangulations of polygons, and how they are labelled.

    \item Section 3 defines \emph{snake graphs}, and gives examples. These graphs will be important in our main theorem, which says that any super $\lambda$-length
          can be written as a Laurent polynomial whose terms are indexed by double dimer covers on a certain snake graph.

    \item Section 4 introduces general terms and notations related to double dimer covers.

    \item Section 5 gives recurrence formulas for double dimer covers on snake graphs. The lemmas appearing in this section are the main ingredients of the inductive
          proof of the main theorem later in the paper.

    \item Section 6 presents the main theorem (Theorem \ref{thm:main}), which says that the Laurent expansions of super $\lambda$-lengths are sums over weights of double dimer covers
          on a certain snake graph.

    \item In Section 7, we define a modified version of the super $T$-paths from \cite{moz21}, and use these to give an explicit weight-preserving bijection between
          super $T$-paths and double dimer covers.

    \item In Section 8, we discuss \emph{dual snake graphs}. Looking at these dual snake graphs, we get yet another combinatorial description of the Laurent terms,
          this time in terms of lattice paths on the snake graph.

    \item In Section 9, we discuss how the terms in these Laurent polynomials form a distributive lattice, and thus can be identified with order ideals of
          a certain poset.

    \item In Section 10, we give some examples and illustrations.  

    \item Lastly, in Section 11, we consider super $\lambda$-lengths on other surfaces, including the once-punctured torus 
          and the annulus with one marked point on each boundary.  We then discuss super analogues of Fibonacci numbers and end with open problems.
\end {itemize}

\section {Decorated Super-Teichm\"{u}ller Theory} 

In this section, we survey the theory of decorated super-Teichm\"uller spaces recently developed by Penner and Zeitlin \cite{pz_19}.
We start with a brief overview of Penner's classical theory of decorated Teichm\"uller spaces. The readers are referred to \cite{penner_12} for a detailed reference.

Consider a surface $S$ with marked points on its boundary\footnote{In general, a surface can also contain marked points on its interior, known as \emph{punctures}, 
but we will not be concerned with this case, except for a discussion of the once-punctured torus in Section 11.},
such that each boundary component contains at least one marked point. 

The \emph{Teichm\"{u}ller space} of $S$, denoted $\mathcal{T}(S)$, is the space of (equivalence classes of) hyperbolic metrics on $S$ with constant negative
curvature, with cusps at the marked points. 
More formally, the Teichm\"{u}ller space of $S$ is defined to be the quotient space
\[ \mathcal{T}(S)=\Hom(\pi_1(S),\psl(2,\rr))/\psl(2,\rr). \]

The \emph{decorated Teichm\"{u}ller space} of $S$, written $\widetilde{\mathcal{T}}(S)$, is a trivial vector bundle over $\mathcal{T}(S)$,
with fiber $\Bbb{R}_{> 0}^n$. The fibers represent a choice of a positive real number associated to each marked point.
At each marked point, we draw a horocycle whose size (or \emph{height}) is determined by the corresponding positive number.
Truncating the geodesics using these horocycles, it now makes sense to talk about their lengths. If $\ell$ is the truncated
length of one of these geodesic segments, then the $\lambda$-length (or \emph{Penner coordinate}) associated to that geodesic arc
is defined to be
\[ \lambda := \exp(\ell/2) \]

Fixing a triangulation of the marked surface, the collection of $\lambda$-lengths corresponding to the arcs in the triangulation (including segments
of the boundary) form a system of coordinates for $\widetilde{\mathcal{T}}(S)$. Choosing a different triangulation results in a different
system of coordinates, but they are related by simple transformations which are a hyperbolic analogue of Ptolemy's theorem from classical Euclidean geometry.
If two triangulations differ by the flip of a single arc as in \Cref{fig:ptolemy}, 
then the $\lambda$-lengths are related by:
$ef = ac + bd$.

\begin{figure}[h!]
\centering

\begin{tikzpicture}[scale=0.7, baseline, thick]

    \draw (0,0)--(3,0)--(60:3)--cycle;
    \draw (0,0)--(3,0)--(-60:3)--cycle;

    \draw (0,0) -- (3,0);

    \draw node[above]      at (70:1.5){$a$};
    \draw node[above]      at (30:2.8){$b$};
    \draw node[below]      at (-30:2.8){$c$};
    \draw node[below=-0.1] at (-70:1.5){$d$};
    \draw node[above] at (1.5, 0){$e$};

    \draw node[left] at (0,0) {};
    \draw node[above] at (60:3) {};
    \draw node[right] at (3,0) {};
    \draw node[below] at (-60:3) {};

\end{tikzpicture}
\begin{tikzpicture}[baseline]
    \draw[->, thick](0,0)--(1,0);
    \node[above]  at (0.5,0) {};
\end{tikzpicture}
\begin{tikzpicture}[scale=0.7, baseline, thick,every node/.style={sloped,allow upside down}]
    \draw (0,0)--(60:3)--(-60:3)--cycle;
    \draw (3,0)--(60:3)--(-60:3)--cycle;

    \draw node[above]      at (70:1.5)  {$a$};
    \draw node[above]      at (30:2.8)  {$b$};
    \draw node[below]      at (-30:2.8) {$c$};
    \draw node[below=-0.1] at (-70:1.5) {$d$};
    \draw node[left]       at (1.6,0)   {$f$};

    \draw (1.5,-2) --  (1.5,2);

    \draw node[left] at (0,0) {};
    \draw node[above] at (60:3) {};
    \draw node[right] at (3,0) {};
    \draw node[below] at (-60:3) {};

\end{tikzpicture}
\caption{Ptolemy transformation}
\label{fig:ptolemy}
\end{figure}
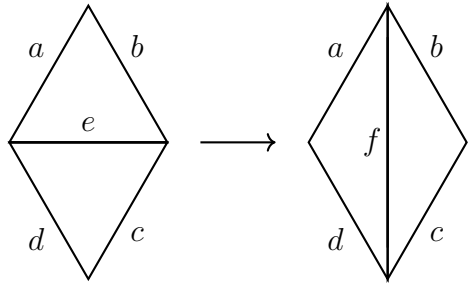

In a recent paper \cite{pz_19}, Penner and Zeitlin gave a supersymmetric analogue of the above mentioned decorated Teichm\"uller theory.
The \emph{decorated super-Teichm\"uller space} of $S$ is a superalgebra, generated by even elements corresponding to the $\lambda$-lengths 
and odd elements, called \emph{$\mu$-invariants}, associated to the triangles. 

To understand the multiplication of the anti-commutative $\mu$-invariants, we need additional combinatorial data 
called a \emph{spin structure}. Connected components of the super-Teichm\"uller space $ST(S)$ are indexed by the set of 
spin structures on $S$, which are formulated as the set of isomorphism classes of Kasteleyn orientations of a certain graph 
embedded on a deformation retract of $S$ \cite{cr_07,cr_08}. These Kasteleyn orientations correspond to equivalence classes of orientations of a fatgraph spine of
$S$. For our purpose, we use the dual of this formalism, where the \emph{spin structures} will be 
identified with the equivalence classes of orientations of edges of a triangulation, under the equivalence relation $\sim$ which we describe right now.

Given a triangulation $T$ with an orientation $\tau$ on its edges. For any triangle $t$,
consider the transformation which reverses the orientation of the three sides of $t$. We say that two orientations are 
equivalent $\tau \sim \tau'$ if they are related by a sequence of these moves. 

Now we are ready to state the super version of the Ptolemy relations.
\begin{align}
    ef      &= ac+bd+\sqrt{abcd} \, \sigma\theta \label{eqn:super_ptolemy_lambda} \\
    \sigma' &= \frac{\sigma\sqrt{bd}-\theta\sqrt{ac}}{\sqrt{ac+bd}}\label{eqn:super_ptolemy_mu_right} \\
    \theta' &= \frac{\theta\sqrt{bd}+\sigma \sqrt{ac}}{\sqrt{ac+bd}}\label{eqn:super_ptolemy_mu_left}
\end{align}

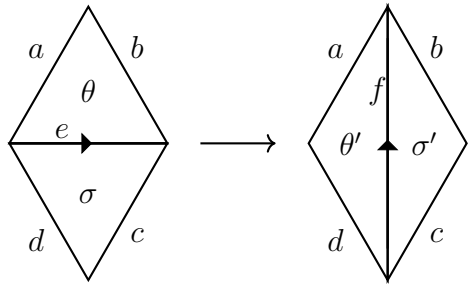
\begin{figure}[h!]
\centering
\begin{tikzpicture}[scale=0.7, baseline, thick]

    \draw (0,0)--(3,0)--(60:3)--cycle;
    \draw (0,0)--(3,0)--(-60:3)--cycle;

    \draw (0,0)--node {\midarrow} (3,0);

    \draw node[above]      at (70:1.5){$a$};
    \draw node[above]      at (30:2.8){$b$};
    \draw node[below]      at (-30:2.8){$c$};
    \draw node[below=-0.1] at (-70:1.5){$d$};
    \draw node[above] at (1,-0.12){$e$};

    \draw node[left] at (0,0) {};
    \draw node[above] at (60:3) {};
    \draw node[right] at (3,0) {};
    \draw node[below] at (-60:3) {};

    \draw node at (1.5,1){$\theta$};
    \draw node at (1.5,-1){$\sigma$};
\end{tikzpicture}
\begin{tikzpicture}[baseline]
    \draw[->, thick](0,0)--(1,0);
    \node[above]  at (0.5,0) {};
\end{tikzpicture}
\begin{tikzpicture}[scale=0.7, baseline, thick,every node/.style={sloped,allow upside down}]
    \draw (0,0)--(60:3)--(-60:3)--cycle;
    \draw (3,0)--(60:3)--(-60:3)--cycle;

    \draw node[above]      at (70:1.5)  {$a$};
    \draw node[above]      at (30:2.8)  {$b$};
    \draw node[below]      at (-30:2.8) {$c$};
    \draw node[below=-0.1] at (-70:1.5) {$d$};
    \draw node[left]       at (1.7,1)   {$f$};

    \draw (1.5,-2) --node {\midarrow} (1.5,2);

    \draw node[left] at (0,0) {};
    \draw node[above] at (60:3) {};
    \draw node[right] at (3,0) {};
    \draw node[below] at (-60:3) {};

    \draw node at (0.8,0){$\theta'$};
    \draw node at (2.2,0){$\sigma'$};
\end{tikzpicture}
\caption{super Ptolemy transformation}
\label{fig:super_ptolemy}
\end{figure}
Note that in \Cref{eqn:super_ptolemy_lambda}, the order of multiplying the two odd variables $\sigma$ and $\theta$ is determined by the orientation of the 
edge being flipped (see the arrow in \Cref{fig:super_ptolemy}).
In addition, it was shown in \cite[Proposition 6.3]{moz21} that \cref{eqn:super_ptolemy_mu_left,eqn:super_ptolemy_mu_right} can be simplified as follows.
\begin{align}
   \tag{2*} \sigma' & = \frac{\sigma\sqrt{bd}-\theta\sqrt{ac}}{\sqrt{ef}} \label{eqn:super_ptolemy_mu_right_1} \\
   \tag{3*} \theta' &= \frac{\theta\sqrt{bd}+\sigma \sqrt{ac}}{\sqrt{ef}} \label{eqn:super_ptolemy_mu_left_1}
\end{align}

In Figure \ref{fig:super_ptolemy}, the orientations of the edges around the boundary of the quadrilateral
are not indicated, but the super Ptolemy transformation 
does affect the boundary orientations, where three of the four edges are unchanged and only the orientation of the edge labelled $b$ is changed 
(see \Cref{fig:flip_spin}). 

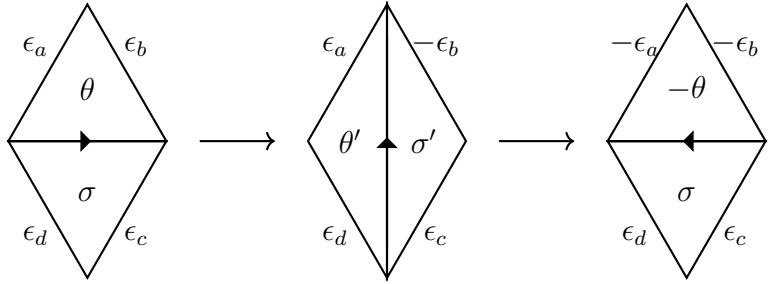
\begin{figure}[h!]

\centering

\begin{tikzpicture}[scale=0.7, baseline, thick]

\draw (0,0)--(3,0)--(60:3)--cycle;

\draw (0,0)--(3,0)--(-60:3)--cycle;

\draw (0,0)--node {\midarrow} (3,0);

\draw node[above] at (70:1.5){$\epsilon_a$};

\draw node[above] at (30:2.8){$\epsilon_b$};

\draw node[below] at (-30:2.8){$\epsilon_c$};

\draw node[below=-0.1] at (-70:1.5){$\epsilon_d$};

%\draw node[above] at (1,-0.12){$e$};

\draw node[left] at (0,0) {};

\draw node[above] at (60:3) {};

\draw node[right] at (3,0) {};

\draw node[below] at (-60:3) {};

\draw node at (1.5,1){$\theta$};

\draw node at (1.5,-1){$\sigma$};

\end{tikzpicture}
\begin{tikzpicture}[baseline]

\draw[->, thick](0,0)--(1,0);

\node[above]  at (0.5,0) {};

\end{tikzpicture}
\begin{tikzpicture}[scale=0.7, baseline, thick,every node/.style={sloped,allow upside down}]
\draw (0,0)--(60:3)--(-60:3)--cycle;
\draw (3,0)--(60:3)--(-60:3)--cycle;

\draw node[above] at (70:1.5){$\epsilon_a$};

\draw node[above] at (30:2.8){$-\epsilon_b$};

\draw node[below] at (-30:2.8){$\epsilon_c$};

\draw node[below=-0.1] at (-70:1.5){$\epsilon_d$};

\draw (1.5,-2) --node {\midarrow} (1.5,2);

%\draw node[left] at (1.7,1){$f$};

\draw node[left] at (0,0) {};

\draw node[above] at (60:3) {};

\draw node[right] at (3,0) {};

\draw node[below] at (-60:3) {};

\draw node at (0.8,0){$\theta'$};

\draw node at (2.2,0){$\sigma'$};

\end{tikzpicture}
\begin{tikzpicture}[baseline]

\draw[->, thick](0,0)--(1,0);

\node[above]  at (0.5,0) {};

\end{tikzpicture}
\begin{tikzpicture}[scale=0.7, baseline, thick]

\draw (0,0)--(3,0)--(60:3)--cycle;

\draw (0,0)--(3,0)--(-60:3)--cycle;

\draw (3,0) --node {\reflectbox{\midarrow}} (0,0);

\draw node[above] at (70:1.5){$-\epsilon_a$};

\draw node[above] at (30:2.8){$-\epsilon_b$};

\draw node[below] at (-30:2.8){$\epsilon_c$};

\draw node[below=-0.1] at (-70:1.5){$\epsilon_d$};

%\draw node[above] at (1,-0.12){$e$};

\draw node[left] at (0,0) {};

\draw node[above] at (60:3) {};

\draw node[right] at (3,0) {};

\draw node[below] at (-60:3) {};

\draw node at (1.5,1){$-\theta$};

\draw node at (1.5,-1){$\sigma$};

\end{tikzpicture}
\caption{Flip effect on spin structures. Here $\epsilon_x$ denotes the orientation of an edge $x$.}
\label{fig:flip_spin}
\end{figure}

Note the super Ptolemy relation is \emph{not} an involution. As depicted in \Cref{fig:flip_spin}, flipping an edge
twice results in reversing the orientations around the top triangle, and additionally negating the $\mu$-invariant.
Hence $\mu$-invariants are well-defined only up to sign.

In our paper, we mostly consider marked disks with a triangulation such that every triangle has a boundary (see \Cref{sec:3} for an explanation), 
in which case the orientations of the boundary edges will not affect our calculation of super $\lambda$-lengths. 
Therefore we can often ignore the boundary orientations. The following lemma from \cite{moz21} tells us that for any spin structure, 
          the corresponding equivalence class of orientations will exhaust all possible interior orientations after ignoring the boundary.

\begin{lemma}[Proposition 4.1 of \cite{moz21}]\label{lm:spin}
    Fix a triangulation of a polygon in which every triangle has at least one boundary edge. Then there is a unique spin structure after ignoring the boundary edges. 
    In particular, this means that, from any representative orientation of a fixed spin structure, one can obtain all other orientations on the interior diagonals, 
    without changing the spin structure.
\end{lemma}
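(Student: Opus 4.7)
The plan is to recast the lemma as a surjectivity statement: if one shows that for every interior diagonal $d$ there is a combination of triangle-flip moves whose net effect on the interior edges is to reverse $d$ alone (even though boundary orientations may also change), then any interior orientation is reachable from any other via flips, and so all orientations descend to a single spin structure after ignoring the boundary. So I will work directly in $\mathbb{F}_2^{I}$, where $I$ is the set of interior diagonals, and associate to each triangle $t$ the vector $v_t \in \mathbb{F}_2^I$ whose support is the set of interior edges of $t$. The goal becomes: the $v_t$'s span $\mathbb{F}_2^I$.

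The first key step is to extract structure from the hypothesis. Since every triangle has at least one boundary edge, no triangle has three interior edges, so in the dual graph of the triangulation (vertices = triangles, edges = interior diagonals) every vertex has degree at most $2$. The dual graph of a polygon triangulation is always a tree (it is connected and has one fewer edge than vertices by Euler counting), and a tree of maximum degree $2$ is a path. So I can order the triangles $T_1,T_2,\ldots,T_k$ and the interior diagonals $d_1,\ldots,d_{k-1}$ so that $d_i$ is shared by $T_i$ and $T_{i+1}$.

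Next I compute the flip vectors. The two end triangles $T_1$ and $T_k$ are ears with two boundary edges each, so $v_{T_1}=\chi_{d_1}$ and $v_{T_k}=\chi_{d_{k-1}}$. Every interior triangle $T_i$ with $1<i<k$ has exactly two interior edges $d_{i-1},d_i$, so $v_{T_i}=\chi_{d_{i-1}}+\chi_{d_i}$. The resulting list
\[ \chi_{d_1},\ \chi_{d_1}+\chi_{d_2},\ \chi_{d_2}+\chi_{d_3},\ \ldots,\ \chi_{d_{k-2}}+\chi_{d_{k-1}},\ \chi_{d_{k-1}} \]
telescopes: successively adding consecutive vectors recovers each basis vector $\chi_{d_i}$, so the flips span $\mathbb{F}_2^I$. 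This gives exactly the surjectivity required, proving both assertions of the lemma simultaneously. Honestly there is no serious obstacle; the only subtlety is the up-front observation that the hypothesis forces the triangulation to be a ``zigzag'' with a path-shaped dual tree, after which the telescoping argument is automatic.
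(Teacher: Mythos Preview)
Your proof is correct. The paper does not actually include a proof of this lemma; it is simply quoted as Proposition~4.1 of \cite{moz21}, so there is no in-paper argument to compare against. Your approach---reducing to a spanning problem in $\mathbb{F}_2^{I}$, using the hypothesis to force the dual tree to be a path, and then telescoping the flip vectors---is clean and complete. The only point worth remarking is that the reduction to $\mathbb{F}_2^{I}$ already encodes the phrase ``after ignoring the boundary edges,'' which is exactly what the lemma permits; you have made this explicit, which is good.
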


\begin{remark}[Remarks 4.2 and 4.3 of \cite{moz21}]
    Note that the equivalence relation guarantees that the result after flipping twice represents the same spin structure, 
    but algebraically it has the effect of negating the $\mu$-invariant of that triangle
    ($\theta \mapsto -\theta$ in \Cref{fig:flip_spin}). This means that the specific $\mu$-invariants are not a feature of the triangulation
    and spin structure alone, but also the choice of representative orientation. Choosing a different orientation corresponds to
    changing the sign of some of the $\mu$-invariants.
    On the other hand, the expressions of $\lambda$-lengths in terms of an initial triangulation are independent of the orientation of 
    the arc as part of a spin structure, and of the flip sequence used to obtain a triangulation containing that arc.  
    This is proven in \cite{pz_19} in the case of surfaces without boundaries. In \cite{moz21}, the authors proved the well-definedness 
    of super $\lambda$-lengths for marked disks by deriving a super Pentagon relation (see \cite[Appendix A.]{moz21}).
\end{remark}

\section {Fan Decomposition, Default Orientation, and Positive Order}\label{sec:3}

In this section, we recall some definitions and conventions used in \cite{moz21}. In particular, given a triangulation of a polygon,
and an (oriented) arc $i \to j$ not in the triangulation,
we define an orientation of the arcs in the triangulation called the \emph{default orientation}, and
a total order on the set of triangles (and odd generators of the super algebra) called the \emph{positive order}.

Let $P$ be a polygon with $n+3$ vertices, i.e. a disk with $n+3$ marked points on the boundary.
Let $a$ and $b$ be two non-adjacent vertices on the boundary and let $(a,b)$ be the arc that connects $a$ and $b$. Without loss of generality, we assume that 
$(a,b)$ crosses all internal diagonals of $T$. (For the purpose of $\lambda$-length calculation, 
triangles that do not intersect with the arc $(a,b)$ can simply be removed from the picture.)

\subsection {Decomposing a Triangulation into Fans}

A triangulation is called a \emph{fan} if all the internal diagonals share a common vertex.
We will introduce a canonical way to break any triangulation $T$ of $P$ into smaller ones that are fans. 
For this purpose, certain vertices of $P$ will be distinguished as \emph{fan centers}.

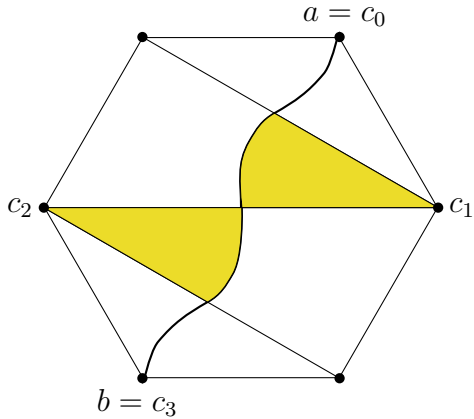
\begin{figure}[h]
	\centering
	\begin{tikzpicture}[scale=1.1]
	\tikzstyle{every path}=[draw] 
		\path
    node[
      regular polygon,
      regular polygon sides=6,
      draw,
      inner sep=1.6cm,
    ] (hexagon) {}
    %
    % Annotations
    (hexagon.corner 1) node[above] {$\ a=c_0$}
    %(hexagon.corner 2) node[above] {$2$}
    (hexagon.corner 3) node[left] {$c_2$}
    (hexagon.corner 4) node[below] {$b=c_3\ $}
    %(hexagon.corner 5) node[below] {$5$}
    (hexagon.corner 6) node[right] {$c_1$}

  ;
  \coordinate (m1) at (1,1.62);
  \coordinate (m2) at (0.1,0.8);
  \coordinate (m3) at (-0.1,-0.8);
  \coordinate (m4) at (-1,-1.62);
  
  \draw [name path = m ,thin,opacity=0] (hexagon.corner 1) to [out=90,in=50] (m1) to [out=-180+50,in=60] (m2) to [out=-180+60,in=60] (m3) to [out=-180+60,in=50] (m4) to [out=-180+50,in=-180+90](hexagon.corner 4);%
  
  \draw [name path = L1] (hexagon.corner 6) to (hexagon.corner 2);
  \draw [name path = L2] (hexagon.corner 6) to (hexagon.corner 3);
  \draw [name path = L3] (hexagon.corner 5) to (hexagon.corner 3);
  
  \path [name intersections={of= m and L1,by=X1}];
  \path [name intersections={of= m and L2,by=X2}];
  \path [name intersections={of= m and L3,by=X3}];
  
  \draw [name path = m ,thick, black] (hexagon.corner 1) to [out=90,in=50] (m1) to [out=-180+50,in=30] (X1) to [out=-180+30,in=60] (m2) to [out=-180+60,in=90] (X2) to [out=90,in=60] (m3) to [out=-180+60,in=30] (X3) to [out=-180+30,in=50] (m4) to [out=-180+50,in=-180+90](hexagon.corner 4);

  \draw [fill=yellow!90!black] (X1) to [out=-180+30,in=60] (m2) to [out=-180+60,in=90] (X2) to (hexagon.corner 6) to (X1);
  
  \draw[fill=yellow!90!black](X2) to [out=90,in=60] (m3) to [out=-180+60,in=30] (X3) to (hexagon.corner 3)--(X2);

  %\draw (m1) node [fill,circle,scale=0.3] {};
  %\draw (m2) node [fill,circle,scale=0.3] {};
  %\draw (m3) node [fill,circle,scale=0.3] {};
  %\draw (m4) node [fill,circle,scale=0.3] {};
  
  %\node[left] at (m1) {$\theta_1$};
  %\node[left] at (m2) {$\theta_2$};
  %\node[right] at (m3) {$\theta_3$}; 
  %\node[left] at (m4) {$\theta_4$};

  \draw (hexagon.corner 1) node [fill,circle,scale=0.35] {};
  \draw (hexagon.corner 2) node [fill,circle,scale=0.35] {};
  \draw (hexagon.corner 3) node [fill,circle,scale=0.35] {};
  \draw (hexagon.corner 4) node [fill,circle,scale=0.35] {};
  \draw (hexagon.corner 5) node [fill,circle,scale=0.35] {};
  \draw (hexagon.corner 6) node [fill,circle,scale=0.35] {};
  
  %\draw [] (m1)--(hexagon.corner 6);
  %\draw [] (m2)-- (hexagon.corner 6);
  %\draw [] (m3)--(hexagon.corner 3);
  %\draw [] (m4)-- (hexagon.corner 5);
  \end{tikzpicture}
		\caption{centers of fan segments.}
		\label{fig:fan_centers}
	\end{figure}

\begin{definition}
    The intersections of arc $(a,b)$ and internal diagonals of $T$ create small triangles (colored yellow in \Cref{fig:fan_centers}), 
    whose vertices in $P$ are called \emph{fan centers}.
    We set $a=c_0$ and $b=c_{N+1}$ and as a convention we will name these centers $c_1,\cdots,c_N$ such that 
    \begin{enumerate}[nosep]
        \item The edge $(c_i,c_{i+1})$ is in $T$ which crosses $(a,b)$ for $1\leq i\leq N-1$;
        \item The intersection $(c_i,c_{i+1}) \cap (a,b)$ is closer to $a$ than $(c_j,c_{j+1}) \cap (a,b)$ if $i<j$. 
    \end{enumerate} 
    Now the polygon $P$ is broken into smaller polygons by the edges $(c_i,c_{i+1})$, each of which 
    comes with a fan triangulation induced from $T$. 
    Moreover, the sub-triangulation of $T$ bounded by $c_{i-1}$, $c_{i}$ and $c_{i+1}$ is called the $i$-th \emph{fan segment} of $T$, whose \emph{center} is said to be $c_i$.
\end{definition}

\subsection{Default Orientation and Positive Order}

For a triangulation $T$ of $P$ and the (directed) arc $(a,b)$ which crosses all internal diagonals of $T$, we define a \emph{default orientation}. 
Such orientation determines an ordering of the $\mu$-invariants, which we call the \emph{positive order}, in which the super $\lambda$-length expansion has positive coefficients.
Note that we will omit the orientation of boundary edges because only the orientation of interior edges affects our calculation of super $\lambda$-lengths.

\begin{definition}
    When the triangulation is a single fan with $c_1$ being the center, every interior edge is oriented away from $c_1$. 
    When $T$ is a triangulation with $N>1$ fans, where $c_1,\cdots,c_N$ are the centers, the interior edges within each fan segment are oriented away 
    from its center. The edges where two fans meet each other are oriented as 
    $c_1\rightarrow c_2\rightarrow\cdots\rightarrow c_{N-1}\rightarrow c_N$.
    See \Cref{fig:default_spin}.
\end{definition}

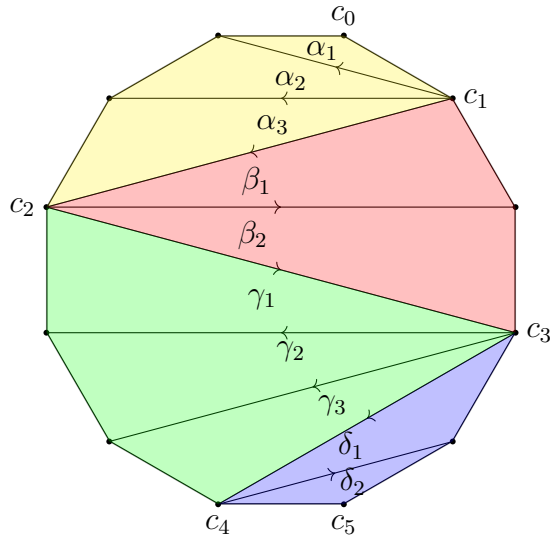
\begin{figure}[h!]
\centering
\begin{tikzpicture}[decoration={
    markings,
    mark=at position 0.5 with {\arrow{>}}},
    scale=1.2] 

\tikzstyle{every path}=[draw] 
		\path
    node[
      regular polygon,
      regular polygon sides=12,
      draw,
      inner sep=2.2cm,
    ] (hexagon) {}
    %
    % Annotations
    (hexagon.corner 1) node[above] {$c_0$}
    (hexagon.corner 2) node[above] {}
    (hexagon.corner 3) node[left] {}
    (hexagon.corner 4) node[left] {$c_2$}
    (hexagon.corner 5) node[below] {}
    (hexagon.corner 6) node[right] {}
    (hexagon.corner 7) node[below] {$c_4$}
    (hexagon.corner 8) node[below] {$c_5$}
    (hexagon.corner 9) node[left] {}
    (hexagon.corner 10) node[right] {$c_3$}
    (hexagon.corner 11) node[below] {}
    (hexagon.corner 12) node[right] {$c_1$}
;

\foreach \x in {1,2,...,12}{
\draw (hexagon.corner \x) node [fill,circle,scale=0.2] {};}

\draw[postaction={decorate}] (hexagon.corner 12)--(hexagon.corner 4);
\draw[postaction={decorate}] (hexagon.corner 12)--(hexagon.corner 2);
\draw[postaction={decorate}] (hexagon.corner 12)--(hexagon.corner 3);

\draw[postaction={decorate}](hexagon.corner 4)--(hexagon.corner 10);
\draw[postaction={decorate}](hexagon.corner 4)--(hexagon.corner 11);

\draw[postaction={decorate}](hexagon.corner 10)--(hexagon.corner 7);
\draw[postaction={decorate}](hexagon.corner 10)--(hexagon.corner 5);
\draw[postaction={decorate}](hexagon.corner 10)--(hexagon.corner 6);

\draw[postaction={decorate}](hexagon.corner 7)--(hexagon.corner 9);

\draw[fill=yellow, nearly transparent] (hexagon.corner 12)--(hexagon.corner 4)--(hexagon.corner 3)--(hexagon.corner 2)--(hexagon.corner 1)--cycle;
\draw[fill=red, nearly transparent] (hexagon.corner 12)--(hexagon.corner 4)--(hexagon.corner 10)--(hexagon.corner 11)--cycle;

\draw[fill=green, nearly transparent] (hexagon.corner 10)-- (hexagon.corner 4)--(hexagon.corner 5)--(hexagon.corner 6)--(hexagon.corner 7)--cycle;
\draw[fill=blue, nearly transparent] (hexagon.corner 10)--(hexagon.corner 7)--(hexagon.corner 8)--(hexagon.corner 9)--cycle;

\coordinate (m 1) at  ($0.38*(hexagon.corner 2)+0.38*(hexagon.corner 1)+0.24*(hexagon.corner 12)$);
\coordinate (m 2) at  ($0.28*(hexagon.corner 2)+0.28*(hexagon.corner 3)+0.44*(hexagon.corner 12)$);
\coordinate (m 3) at  ($0.24*(hexagon.corner 4)+0.24*(hexagon.corner 3)+0.52*(hexagon.corner 12)$);
\coordinate (m 4) at  ($0.52*(hexagon.corner 4)+0.24*(hexagon.corner 11)+0.24*(hexagon.corner 12)$);
\coordinate (m 5) at  ($0.56*(hexagon.corner 4)+0.22*(hexagon.corner 11)+0.22*(hexagon.corner 10)$);
\coordinate (m 6) at  ($0.27*(hexagon.corner 4)+0.27*(hexagon.corner 5)+0.46*(hexagon.corner 10)$);
\coordinate (m 7) at  ($0.23*(hexagon.corner 6)+0.23*(hexagon.corner 5)+0.44*(hexagon.corner 10)$);
\coordinate (m 8) at  ($0.26*(hexagon.corner 6)+0.26*(hexagon.corner 7)+0.48*(hexagon.corner 10)$);
\coordinate (m 9) at  ($0.25*(hexagon.corner 9)+0.5*(hexagon.corner 7)+0.25*(hexagon.corner 10)$);
\coordinate (m 10) at  ($0.36*(hexagon.corner 9)+0.24*(hexagon.corner 7)+0.4*(hexagon.corner 8)$);

\node at (m 1) {$\alpha_1$};
\node at (m 2) {$\alpha_2$};
\node at (m 3) {$\alpha_3$};
\node at (m 4) {$\beta_1$};
\node at (m 5) {$\beta_2$};
\node at (m 6) {$\gamma_1$};
\node at (m 7) {$\gamma_2$};
\node at (m 8) {$\gamma_3$};
\node at (m 9) {$\delta_1$};
\node at (m 10) {$\delta_2$};

\end{tikzpicture}
\caption{The default orientation of a generic triangulation where each fan segment is colored differently. The faces are labelled by their $\mu$-invariants.}

\label{fig:default_spin}
	
\end{figure}

\begin{remark}
    As mentioned above, the definition of default orientation depends on the choice of direction $a \to b$.
    In particular, choosing the opposite direction $b \to a$ would change the labelling so that $c_i$ becomes $c_{N-i}$.
    The effect is that the orientation of the diagonals within a fan are unchanged, but the diagonals connecting 
    two fan centers would have the reverse orientation.
\end{remark}

\begin{definition} \label{def:pos_order}
    We define the \emph{positive ordering} inductively, triangle-by-triangle, as follows:
    Let the triangles be labelled $\theta_1, \theta_2, \dots, \theta_n$ in order from $a$ to $b$.
    For each triangle $\theta_k$, we look at the edge separating $\theta_k$ and $\theta_{k+1}$: 
    \begin{enumerate}
    	\item If the edge is oriented so that $\theta_k$ is to the right,
              then we declare that $\theta_k > \theta_i$ for all $i > k$.
        \item Otherwise, if $\theta_k$ is to the left, we declare that $\theta_k < \theta_i$ for all $i > k$.
    \end{enumerate}
\end{definition}

For example, in \Cref{fig:default_spin}, the positive ordering on the faces is
\[\alpha_1>\alpha_2>\alpha_3>\gamma_1>\gamma_2>\gamma_3> \delta_2>\delta_1>\beta_2>\beta_1.\]
       
\begin{remark} \label{rem:pos_order}
    Note that inside each of the fan segments, the triangles are ordered counterclockwise around the fan center. 
    For a more detailed description of the positive order, see Section 4 of \cite{moz21}.
\end{remark}

\bigskip

From now on, we will always assume our triangulation has the default orientation on its interior diagonals, 
and state our theorems under this assumption. This is allowed because of \Cref{lm:spin}. If we start with an arbitrary orientation, 
we can first apply a sequence of equivalence relations (reversing the arrows around a triangle and negating the $\mu$-invariant) 
to get to the default orientation, and apply our theorems therein.

\bigskip

\section {Snake Graphs}

\bigskip

\begin {definition}
    A \emph{snake graph} is a planar graph consisting of a sequence of square \emph{tiles},
    each connected to either the top or right side of the previous tile. Given a snake graph $G$,
    the \emph{word} of $G$, denoted $W(G)$, is a string in the alphabet $\{\text{R,U}\}$ (standing for ``\emph{right}'' and ``\emph{up}'')
    indicating how each tile is connected to the previous. See \Cref{fig:snake_graph_examples} for examples.
\end {definition}

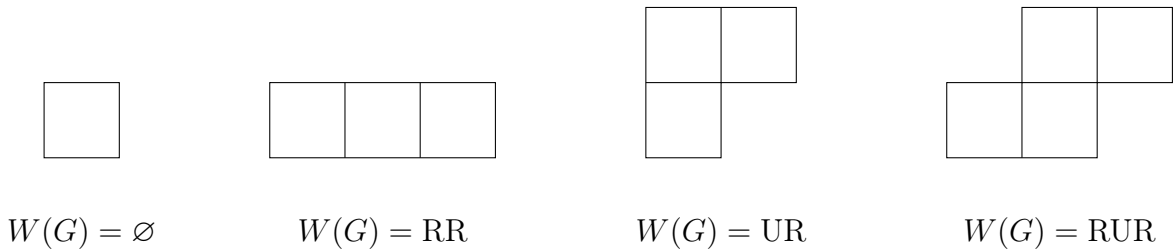
\begin {figure}[h!]
\centering
\begin {tikzpicture}
    \begin {scope}[shift={(-3,0)}]
        \draw (0,0) -- (1,0) -- (1,1) -- (0,1) -- cycle;

        \draw (0.5,-1) node {$W(G) = \varnothing$};
    \end {scope}

    \draw (0,0) -- (3,0) -- (3,1) -- (0,1) -- cycle;
    \draw (1,0) -- (1,1);
    \draw (2,0) -- (2,1);

    \draw (1.5,-1) node {$W(G) = \text{RR}$};

    \begin {scope}[shift={(5,0)}]
        \draw (0,0) -- (0,2) -- (2,2) -- (2,1) -- (1,1) -- (1,0) -- cycle;
        \draw (0,1) -- (1,1) -- (1,2);

        \draw (1,-1) node {$W(G) = \text{UR}$};
    \end {scope}

    \begin {scope}[shift={(9,0)}]
        \draw (0,0) -- (2,0) -- (2,1) -- (3,1) -- (3,2) -- (1,2) -- (1,1) -- (0,1) -- cycle;
        \draw (1,0) -- (1,1) -- (2,1) -- (2,2);

        \draw (1.5,-1) node {$W(G) = \text{RUR}$};
    \end {scope}
\end {tikzpicture}
\caption {Examples of snake graphs}
\label {fig:snake_graph_examples}
\end {figure}

We will now review a construction from \cite{ms10}, which associates to each arc of a triangulated polygon
a labelled snake graph. In the remainder of this section, we assume we are given a polygon with $n$ sides,
and a chosen triangulation. As in \cite{moz21}, we label the edges and triangular faces of the triangulation 
with the corresponding generators of the super algebra.

\bigskip

\begin {definition} \label{def:Ggamma}
    Let $\gamma$ be a diagonal of the polygon. We will construct a snake graph $G_\gamma$.
    Suppose vertices $a$ and $b$ are the endpoints of $\gamma$. If we traverse $\gamma$ from $a$ to $b$,
    let $x_1,\dots,x_k$ be the labels of the diagonals crossed by $\gamma$, in order. Without loss
    of generality, we suppose that $\gamma$ is a \emph{longest edge}, in which case $k=n-3$. We will also
    always draw our polygons so that $a$ is at the bottom and $b$ is at the top (the arc $\gamma$ is oriented bottom-to-top).

    There will be one tile of $G_\gamma$ for each diagonal $x_i$. For each $i$, the diagonal $x_i$
    is the common side of two triangles in the triangulation, and is thus the diagonal of a quadrilateral.
    We make a square tile $T_i$ whose four sides are labelled the same as the sides of this quadrilateral.
    If $i$ is odd, then the orientation of $T_i$ matches that of the triangulation, and if $i$ is even,
    then the orientation of $T_i$ is reversed. Examples are shown in \Cref{fig:snake_tiles}. By convention,
    we draw the first tile $T_1$ so that the endpoint of $\gamma$ is the bottom left corner.

    Although we do not draw the diagonals in the snake graph tiles, we still speak of $x_i$ as being ``\emph{the diagonal of tile $T_i$}''.
    These diagonals, although not drawn, should be thought of as separating the $\theta_i$'s labelling the triangles. These $\theta$
    labels are positioned in the corners of the tiles corresponding to their position in the triangulation.

    \begin {figure}[h!]
    \centering
    \begin {tikzpicture}[scale=1.1, every node/.style = {scale=0.8}]
        \draw (0:1.5) -- (60:1.5) -- (120:1.5) -- (180:1.5) -- (240:1.5) -- (300:1.5) -- cycle;
        \draw (120:1.5) --node[above]{$x_3$} (0:1.5) --node[above]{$x_2$} (180:1.5) --node[right]{$x_1$} (-60:1.5);

        \draw (210:1.5) node {$y_1$};
        \draw (270:1.5) node {$y_2$};
        \draw (330:1.5) node {$y_3$};
        \draw (30:1.5)  node {$y_4$};
        \draw (90:1.5)  node {$y_5$};
        \draw (150:1.5) node {$y_6$};

        \draw (240:1.15) node {$\theta_1$};
        \draw (310:0.65) node {$\theta_2$};
        \draw (130:0.65) node {$\theta_3$};
        \draw (60:1.2)   node {$\theta_4$};

        \draw[blue, line width=1.5] (180:1.5) -- (240:1.5) -- (300:1.5) -- (0:1.5) -- cycle;

        \begin {scope} [shift={(-0.75,-4)}]
            \draw (-1.5,0.75) node {$T_1$};
            \draw (-1,0.75) node {$\colon$};
            \draw (0,0) --node[below]{$y_2$} (1.5,0) --node[right]{$y_3$} (1.5,1.5) --node[above]{$x_2$} (0,1.5) --node[left]{$y_1$} (0,0);
            \draw (0.3,0.3) node {$\theta_1$};
            \draw (1.2,1.2) node {$\theta_2$};
        \end {scope}

        \begin {scope}[shift={(5,0)}]
            \draw (0:1.5) -- (60:1.5) -- (120:1.5) -- (180:1.5) -- (240:1.5) -- (300:1.5) -- cycle;
            \draw (120:1.5) --node[above]{$x_3$} (0:1.5) --node[above]{$x_2$} (180:1.5) --node[right]{$x_1$} (-60:1.5);

            \draw (210:1.5) node {$y_1$};
            \draw (270:1.5) node {$y_2$};
            \draw (330:1.5) node {$y_3$};
            \draw (30:1.5)  node {$y_4$};
            \draw (90:1.5)  node {$y_5$};
            \draw (150:1.5) node {$y_6$};

            \draw (240:1.15) node {$\theta_1$};
            \draw (310:0.65) node {$\theta_2$};
            \draw (130:0.65) node {$\theta_3$};
            \draw (60:1.2)   node {$\theta_4$};

            \draw[red, line width=1.5] (180:1.5) -- (-60:1.5) -- (0:1.5) -- (120:1.5) -- cycle;

            \begin {scope} [shift={(-0.75,-4)}]
                \draw (-1.5,0.75) node {$T_2$};
                \draw (-1,0.75) node {$\colon$};
                \draw (0,0) --node[below]{$x_1$} (1.5,0) --node[right]{$y_6$} (1.5,1.5) --node[above]{$x_3$} (0,1.5) --node[left]{$y_3$} (0,0);
                \draw (0.3,0.3) node {$\theta_2$};
                \draw (1.2,1.2) node {$\theta_3$};
            \end {scope}
        \end {scope}
    \end {tikzpicture}
    \caption {Tiles corresponding to diagonals in a triangulation}
    \label {fig:snake_tiles}
    \end {figure}
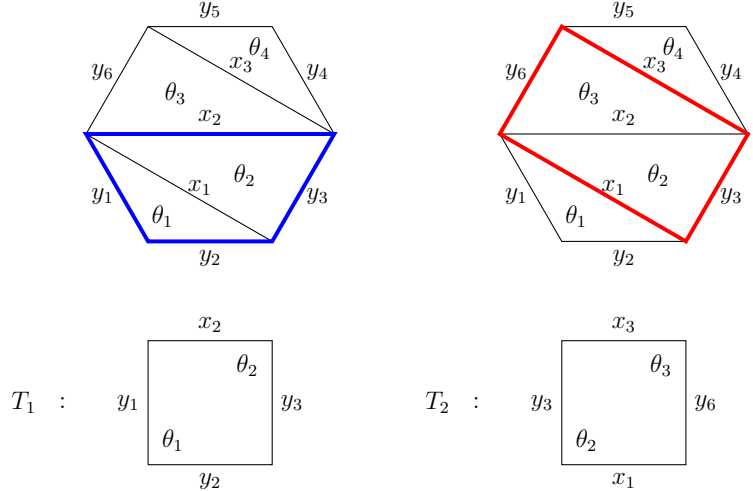

    By construction, tiles $T_i$ and $T_{i+1}$ share a triangle in common. Two of the sides of this triangle are the diagonals $x_i$ and $x_{i+1}$.
    Let $e$ denote the label on the third side of this triangle. The tiles $T_i$ and $T_{i+1}$ will both have a side labelled $e$ (which will be
    either the top or right edge of $T_i$). We will glue the tiles along the edge $e$.

    Examples of snake graphs are pictured in \Cref{fig:labelled_snake_graphs}.
\end {definition}

\begin {figure}[h!]
\centering
\begin {tikzpicture}[scale=1.1, every node/.style = {scale=0.8}]
    \draw (0:1.5) -- (60:1.5) -- (120:1.5) -- (180:1.5) -- (240:1.5) -- (300:1.5) -- cycle;
    \draw (120:1.5) --node[above]{$x_3$} (0:1.5) --node[above]{$x_2$} (180:1.5) --node[right]{$x_1$} (-60:1.5);

    \draw (210:1.5) node {$y_1$};
    \draw (270:1.5) node {$y_2$};
    \draw (330:1.5) node {$y_3$};
    \draw (30:1.5)  node {$y_4$};
    \draw (90:1.5)  node {$y_5$};
    \draw (150:1.5) node {$y_6$};

    \draw (240:1.15) node {$\theta_1$};
    \draw (310:0.65) node {$\theta_2$};
    \draw (130:0.65) node {$\theta_3$};
    \draw (60:1.2)   node {$\theta_4$};

    \draw[dashed] (240:1.5) -- (60:1.5);

    \begin {scope} [shift={(5,-0.75)}]
        \draw (-1.5,0.75) node {$G_\gamma$};
        \draw (-1,0.75) node {$\colon$};
        \draw (0,0)   --node[below]{$y_2$} (1.5,0) --node[right]{$y_3$} (1.5,1.5) --node[above]{$x_2$} (0,1.5) --node[left]{$y_1$} (0,0);
        \draw (1.5,0) --node[below]{$x_1$} (3,0)   --node[right]{$y_6$} (3,1.5)   --node[above]{$x_3$} (1.5,1.5);
        \draw (3,0)   --node[below]{$x_2$} (4.5,0) --node[right]{$y_4$} (4.5,1.5) --node[above]{$y_5$} (3,1.5);

        \draw (0.3,0.3) node {$\theta_1$};
        \draw (1.2,1.2) node {$\theta_2$};
        \draw (1.8,0.3) node {$\theta_2$};
        \draw (2.7,1.2) node {$\theta_3$};
        \draw (3.3,0.3) node {$\theta_3$};
        \draw (4.2,1.2) node {$\theta_4$};
    \end {scope}

    \begin {scope}[shift={(0,-4)}]
        \draw (0:1.5) -- (60:1.5) -- (120:1.5) -- (180:1.5) -- (240:1.5) -- (300:1.5) -- cycle;
        \draw (0:1.5) --node[above]{$x_2$} (180:1.5) --node[right]{$x_1$} (-60:1.5);
        \draw (60:1.5) --node[above]{$x_3$} (180:1.5);

        \draw (210:1.5) node {$y_1$};
        \draw (270:1.5) node {$y_2$};
        \draw (330:1.5) node {$y_3$};
        \draw (30:1.5)  node {$y_4$};
        \draw (90:1.5)  node {$y_5$};
        \draw (150:1.5) node {$y_6$};

        \draw (240:1.15) node {$\theta_1$};
        \draw (310:0.65) node {$\theta_2$};
        \draw (40:0.65)  node {$\theta_3$};
        \draw (120:1.2)  node {$\theta_4$};

        \draw[dashed] (240:1.5) -- (120:1.5);

        \begin {scope} [shift={(5,-0.75)}]
            \draw (-1.5,0.75) node {$G_\gamma$};
            \draw (-1,0.75) node {$\colon$};
            \begin {scope} [shift={(0,-0.75)}]
                \draw (0,0)   --node[below]{$y_2$} (1.5,0) --node[right]{$y_3$} (1.5,1.5) --node[above]{$x_2$} (0,1.5) --node[left]{$y_1$} (0,0);
                \draw (1.5,0) --node[below]{$x_1$} (3,0)   --node[right]{$x_3$} (3,1.5)   --node[above]{$y_4$} (1.5,1.5);
                \draw (3,1.5) --node[right]{$y_5$} (3,3)   --node[above]{$y_6$} (1.5,3)   --node[left]{$x_2$} (1.5,1.5);

                \draw (0.3,0.3) node {$\theta_1$};
                \draw (1.2,1.2) node {$\theta_2$};
                \draw (1.8,0.3) node {$\theta_2$};
                \draw (2.7,1.2) node {$\theta_3$};
                \draw (1.8,1.8) node {$\theta_3$};
                \draw (2.7,2.7) node {$\theta_4$};
            \end {scope}
        \end {scope}
    \end {scope}
\end {tikzpicture}
\caption {Snake graph corresponding to a diagonal $\gamma$ (drawn dashed)}
\label {fig:labelled_snake_graphs}
\end {figure}
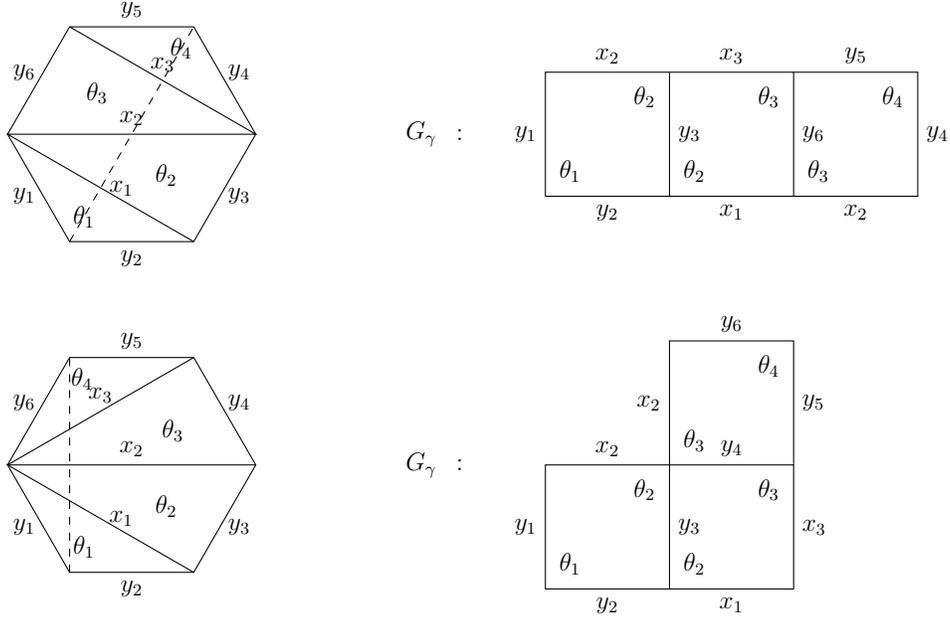

\bigskip

\begin {definition}
    Let $\gamma = (i,j)$ be a diagonal of a triangulated polygon. Define $\mathrm{cross}(\gamma) = \mathrm{cross}(i,j)$ to be
    the monomial $x_1 x_2 \cdots x_k$, the product of all diagonals which cross $\gamma$.
\end {definition}

\bigskip

\begin {theorem} \cite{ms10} \label{thm:MS10}
    Let $\mathcal{A}$ be the cluster algebra coming from a polygon, with initial seed given by a chosen triangulation,
    and let $\gamma=(i,j)$ be a diagonal of the polygon. Then
    \[ x_{ij} = \frac{1}{\mathrm{cross}(\gamma)} \sum_{M \in D(G_\gamma)} \mathrm{wt}(M) \]
\end {theorem}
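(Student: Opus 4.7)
The plan is to induct on $k$, the number of diagonals of $T$ crossed by $\gamma$, combining the hyperbolic Ptolemy relation with a matching-theoretic recurrence on the snake graph.

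\emph{Base case.} When $k = 1$, the snake graph $G_\gamma$ is a single square tile whose four edges are labelled by the sides of the quadrilateral containing the unique crossing diagonal. This tile has exactly two perfect matchings, namely the two pairs of opposite edges, so $\sum_{M} \mathrm{wt}(M) = ac + bd$. Dividing by $\mathrm{cross}(\gamma) = e$ recovers the Ptolemy identity $x_\gamma = (ac + bd)/e$, which is how $\lambda$-lengths transform under a flip.

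\emph{Inductive step.} Assume the formula holds for all arcs crossing fewer than $k$ diagonals, with $k \geq 2$. Let $\gamma$ enter its final triangle through the last crossed diagonal $x_k$, and let $v, w$ denote the other two vertices of that triangle. The quadrilateral with vertices $i, v, j, w$ has $\gamma$ and $x_k$ as its two diagonals, so Ptolemy gives
\[
x_\gamma \cdot x_k \;=\; x_{iv}\,x_{jw} \;+\; x_{iw}\,x_{vj}.
\]
Here $(j,v)$ and $(j,w)$ are edges of the triangulation (being sides of the final triangle), while $(i,v)$ and $(i,w)$ each cross strictly fewer than $k$ diagonals. By the inductive hypothesis, each of $x_{iv}$ and $x_{iw}$ is a sum of weighted dimer covers over a smaller snake graph, divided by the appropriate cross-monomial.

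\emph{Matching the combinatorics.} To close the induction, I need a weight-preserving bijection
\[
D(G_\gamma) \;\longleftrightarrow\; \bigl(D(G_{iv}) \times D(G_{jw})\bigr) \;\sqcup\; \bigl(D(G_{iw}) \times D(G_{vj})\bigr),
\]
together with the cancellation identity $\mathrm{cross}(\gamma)\cdot x_k = \mathrm{cross}(i,v)\cdot \mathrm{cross}(j,w) = \mathrm{cross}(i,w)\cdot \mathrm{cross}(v,j)$ needed to balance denominators (using that $\mathrm{cross}(j,v) = \mathrm{cross}(j,w) = 1$ since $(j,v)$ and $(j,w)$ are edges of the triangulation). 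The bijection comes from observing that the last tile $T_k$ has two distinguished boundary edges, one incident to $v$ and one incident to $w$, and every perfect matching of $G_\gamma$ must use exactly one of them. Splitting $D(G_\gamma)$ according to this choice peels off a single matched edge together with the tile $T_k$, leaving a sub-snake-graph naturally isomorphic to $G_{iv}$ or $G_{iw}$.

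\emph{Main obstacle.} The heart of the argument is the verification that the snake graph construction of \Cref{def:Ggamma} is functorial with respect to truncating $\gamma$ to $(i,v)$ or $(i,w)$: removing the last tile of $G_\gamma$ along with the chosen matched edge must produce the snake graph of the corresponding shorter arc, with all edge labels intact. This requires a case analysis depending on whether the final letter of $W(G_\gamma)$ is $\mathrm{R}$ or $\mathrm{U}$, and on the relative position of $v$ versus $w$ in the final triangle (which in turn depends on the parity of $k$ via the alternating tile orientation in \Cref{def:Ggamma}). Once this bookkeeping is in place, the weights multiply out correctly tile-by-tile, and the theorem follows.
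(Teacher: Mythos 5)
This theorem is cited in the paper from \cite{ms10} without an in-text proof, so the natural comparison is to the proof of the super analogue (Theorem \ref{thm:main}) and the lemmas in Section 5, which mirror the argument of \cite{ms10}.

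Your overall strategy — induct on the number of crossings, apply the Ptolemy relation in the quadrilateral $i,v,j,w$ containing the last crossed diagonal $x_k$, and split perfect matchings of $G_\gamma$ according to which of the two NE boundary edges of $T_k$ is used — is indeed the same strategy used both in \cite{ms10} and in the paper's proof of the super version. The base case and the Ptolemy setup are correct.

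However, there is a genuine gap in the matching-theoretic step. You assert that \emph{both} cases of the split ``peel off a single matched edge together with the tile $T_k$, leaving a sub-snake-graph naturally isomorphic to $G_{iv}$ or $G_{iw}$,'' and correspondingly you write the cancellation identity $\mathrm{cross}(\gamma)\cdot x_k = \mathrm{cross}(i,v)\cdot\mathrm{cross}(j,w) = \mathrm{cross}(i,w)\cdot\mathrm{cross}(v,j)$. Both of these are false, and for the same reason: the two arcs $(i,v)$ and $(i,w)$ are \emph{not symmetric}. Since $x_{k-1}$ and $x_k$ must share a vertex (they are two sides of the shared triangle between tiles $T_{k-1}$ and $T_k$), exactly one of $v,w$ — say $w$ — is that shared vertex. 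Then $(i,v)$ crosses $x_1,\dots,x_{k-1}$, so $G_{iv} = G^{(-1)}$ and $\mathrm{cross}(i,v) = x_1\cdots x_{k-1}$; but $(i,w)$ shares the vertex $w$ with $x_{k-1}$, and in fact with every diagonal of the last fan segment, so $(i,w)$ crosses strictly fewer diagonals and $G_{iw}$ has strictly fewer than $k-1$ tiles. In particular $\mathrm{cross}(i,v) \neq \mathrm{cross}(i,w)$ in general, so your displayed cancellation identity fails (you can check this already on the word $\mathrm{RR}$ example: $5 = F_5$ matchings split as $3 + 2$, not $3 + 3$).

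The correct bijection, which your argument is missing, is the snake-graph ``staircase'' recurrence, i.e., the classical analogues of the paper's Lemmas \ref{lem:lemma1} and \ref{lem:lemma2}. When the matching uses the edge not incident to the fan center (the $(i,v)$ case), the forced local structure removes exactly the last tile and one edge, giving $G^{(-1)} = G_{iv}$. But when the matching uses the edge on the fan-center side (the $(i,w)$ case), the choice forces a cascade of edges along the entire maximal terminal staircase of $G_\gamma$ (the tiles corresponding to the diagonals of the last fan segment); the remaining matching lives on $G^{(-s)}$ where $s$ is the staircase length, and only then is $G^{(-s)} \cong G_{iw}$. The product of the weights of the forced staircase edges is exactly the factor $x_{k'+1}\cdots x_{k-1}$ needed to reconcile $\mathrm{cross}(i,w)$ with $\mathrm{cross}(\gamma)/x_k$. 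Without this, the weight accounting in your inductive step does not close. This staircase phenomenon is precisely what the paper packages into Lemma \ref{lem:lemma2} and must be established before the induction works; ``removing the last tile'' is not functorial in the way your sketch suggests.
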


Here, $D(G_\gamma)$ denotes the collection of dimer covers on the snake graph $G_\gamma$.  
In particular, each $M$ is a subset of edges on $G_\gamma$ such that every vertex of $G_\gamma$ is incident to exactly one edge of $M$.  
The weight of $M$, denoted as $\mathrm{wt}(M)$, is the product of all edge labels for all edges in $M$.  
In the next section, we extend these definitions to the case of \emph{double} dimer covers.

\bigskip

\section {Double Dimer Covers}

\begin {definition}
    If $G$ is a planar bipartite graph, a \emph{double dimer cover} of $G$ is a multiset $M$ of edges
    such that each vertex of $G$ is incident to exactly two edges from $M$ (which are allowed to be two
    copies of the same edge). Given a double dimer cover $M$, we call each element of $M$ a \emph{dimer}.
    Dimers will be pictured as wavy orange lines, and two overlapping dimers (called a \emph{double dimer}) 
    will be pictured as a solid blue line.
\end {definition}

\bigskip

\begin {definition}
    Let $G$ be a snake graph, and let $T$ denote the last tile of $G$. Then we define:
    
    \begin {itemize}
        \item $D(G)$ is the set of all double dimer covers  
        on $G$.
        \item $D_R(G)$ is the set of $M \in D(G)$ which use two dimers on the right edge of $T$.
        \item $D_T(G)$ is the set of $M \in D(G)$ which use two dimers on the top edge of $T$.
        \item $D_{tr}(G)$ is the set of $M \in D(G)$ which use single dimers on the top and right edges of $T$.
        \item $D_r(G)$ is the set of $M \in D(G)$ which use at least one dimer on the right edge of $T$.
        \item $D_t(G)$ is the set of $M \in D(G)$ which use at least one dimer on the top edge of $T$.
    \end {itemize}
\end {definition}

\bigskip

Looking at the top-right vertex of the last tile, we see that $D(G)$ is the disjoint union 
\[ D(G) = D_R(G) \cup D_T(G) \cup D_{tr}(G) \]

\bigskip

\begin {remark}
    Note that $D_r(G) = D_R(G) \cup D_{tr}(G)$ and $D_t(G) = D_T(G) \cup D_{tr}(G)$.
\end {remark}

\bigskip

\begin {definition} \label{def:weight-M}
    If $G$ is a snake graph, and $M \in D(G)$, we will define the \emph{weight} of $M$, denoted $\mathrm{wt}(M)$,
    as a monomial of the super algebra. The weight of a dimer (an element of the multiset $M$)
    is the square root of the label of the corresponding edge of $G$. Let $c(M)$ be the set of cycles formed by the
    edges of $M$. For a cycle $C \in c(M)$, let $\theta_i$ be the odd variable corresponding to the triangle in the
    bottom-left corner of $C$, and $\theta_j$ for the top-right corner. We define the weight of the cycle to be $\mathrm{wt}(C) = \theta_i \theta_j$.
    Finally, we define the weight of $M$ as
    \[ \mathrm{wt}(M) :=\prod_{e \in M} \mathrm{wt}(e) \prod_{C \in c(M)} \mathrm{wt}(C)\]
where the products are taken under the positive order of the underlying triangulation.
\end {definition}

Note that we can map any dimer cover of $G$ to a double dimer cover of $G$ by turning each dimer edge into a double dimer, i.e. a doubled edge.  
For such $M$'s, the weight of $M$, as defined here, versus following the definition from \Cref{thm:MS10} coincide.  
Thus there is no abuse in notation for using $\mathrm{wt}(M)$ in both settings.

\bigskip

\section {Double Dimer Recurrences}

\bigskip

In this section, we give some recurrences for double dimer covers  
on snake graphs. 
In particular, we establish bijections between each of the sets $D_R(G)$, $D_T(G)$, and $D_{tr}(G)$
with certain subsets of double dimer covers  
on smaller snake graphs. We also give explicit
expressions for how the weights transform under these bijections.

\begin {remark}
    Although we usually start with a triangulation, and build a snake graph from it (as in \Cref{def:Ggamma}),
    in this section we instead start with a labelled snake graph. It is not hard to see that the construction
    of \Cref{def:Ggamma} can be reversed, and the triangulation can be reconstructed from the snake graph.
    As such, we will still speak of the ``\emph{diagonal}'' of a tile, meaning the label
    on the corresponding diagonal of the triangulation.
\end {remark}

\bigskip

\begin {definition}
    Define $G^{(-k)}$ to be the snake graph $G$ with the last $k$ tiles removed. 
\end {definition}

\bigskip

\begin {lemma} \label{lem:lemma1}
    \ \\
    \begin {itemize}
        \item[$(a)$] If $W(G)$ ends in  ``$R$'', then there is a bijection $f \colon D(G^{(-1)}) \to D_R(G)$,
        satisfying $\mathrm{wt}(f(M)) = a \cdot \mathrm{wt}(M)$, where ``$a$'' is the label on the right edge
        of the last tile. In particular,
        \[ \sum_{M \in D(G^{(-1)})} \mathrm{wt}(M) = \frac{1}{a} \sum_{M \in D_R(G)} \mathrm{wt}(M) \]
        \item[$(b)$] If $W(G)$ ends in  ``$U$'', then there is a bijection $f \colon D(G^{(-1)}) \to D_T(G)$,
        satisfying $\mathrm{wt}(f(M)) = a \cdot \mathrm{wt}(M)$, where ``$a$'' is the label on the top edge
        of the last tile. In particular,
        \[ \sum_{M \in D(G^{(-1)})} \mathrm{wt}(M) = \frac{1}{a} \sum_{M \in D_T(G)} \mathrm{wt}(M) \]
    \end {itemize}
\end {lemma}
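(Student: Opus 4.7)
The plan is to prove part (a) by constructing an explicit bijection that appends a doubled edge along the right edge of the last tile; part (b) will then follow by an entirely analogous argument with the roles of ``right'' and ``top'' (and $R$ and $U$) interchanged.

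Let $T$ denote the last tile of $G$, with corners labelled $\mathrm{BL}, \mathrm{BR}, \mathrm{TL}, \mathrm{TR}$ in the obvious positions, and let $a$ be the label of its right edge. Since $W(G)$ ends in $R$, the left edge of $T$ is shared with the penultimate tile, so $\mathrm{BL}$ and $\mathrm{TL}$ are vertices of $G^{(-1)}$ while $\mathrm{BR}$ and $\mathrm{TR}$ are not. I would define $f \colon D(G^{(-1)}) \to D_R(G)$ by letting $f(M')$ be $M'$ together with two copies of the right edge of $T$. Verifying $f(M') \in D_R(G)$ is immediate: vertices of $G^{(-1)}$ retain their two incident dimers from $M'$, while $\mathrm{BR}$ and $\mathrm{TR}$ are saturated by the two new dimers. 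For the inverse, given $M \in D_R(G)$, the saturation of $\mathrm{BR}$ and $\mathrm{TR}$ by the doubled right edge forces the top and bottom edges of $T$ to be absent from $M$; removing the two copies of the right edge then yields a double dimer cover of $G^{(-1)}$, and these two operations are mutually inverse by construction.

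For the weight identity, two observations suffice. The product of dimer weights in $f(M')$ equals that in $M'$ multiplied by $(\sqrt{a})^2 = a$; and the doubled right edge is an isolated component of $f(M')$, contributing no cycle, while the absence of the top and bottom edges of $T$ prevents any cycle of $f(M')$ from extending into $T$. Hence $c(f(M')) = c(M')$, the cycle weights (including positive-order factors) are preserved exactly, and summing the identity $\mathrm{wt}(f(M')) = a \cdot \mathrm{wt}(M')$ over the bijection yields the claim. The main obstacle is precisely this equality $c(f(M')) = c(M')$: a cycle of $M'$ passing through the shared left edge of $T$ could in principle interact with the new tile, but this is ruled out because no edge of $T$ other than the already-saturated doubled right edge and the shared left edge appears in $f(M')$. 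Part (b) follows by applying the same reasoning to the top edge of $T$.
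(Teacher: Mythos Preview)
Your proof is correct and follows exactly the same approach as the paper: the paper's argument is the one-line observation that a double dimer cover of $G^{(-1)}$ extends uniquely to an element of $D_R(G)$ (resp.\ $D_T(G)$) by adjoining two copies of the edge labelled $a$, together with a reference to a figure. You have simply spelled out in full the details the paper leaves implicit, in particular the verification that no cycles are created or altered (so the odd part of the weight is preserved) and the description of the inverse map.
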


\begin{proof}
A double dimer cover in $D(G^{(-1)})$ can be uniquely extended to a double dimer cover in $D_R(G)$ (resp. $D_T(G)$) by adjoining two copies of the edge labeled $a$.  See \Cref{fig:proof_c}.
\end{proof}

\begin {lemma} \label{lem:lemma2}
    \ \\
    \begin {itemize}
        \item[$(a)$] If $W(G)$ ends in either ``$RRURUR \cdots R$'' or ``$UURURU \cdots R$'' (of length $k \geq 2$), 
        then there is a bijection $f \colon D(G^{(-k)}) \to D_T(G)$ satisfying $\mathrm{wt}(f(M)) = b e_2 \cdots e_k \cdot \mathrm{wt}(M)$,
        where $e_1,\dots,e_k$ are the diagonals of the last $k$ tiles in reverse order,
        and $b$ is the label on the top side of the last tile. In particular,
        \[ \sum_{M \in D(G^{(-k)})} \mathrm{wt}(M) = \frac{1}{be_2 \cdots e_k} \sum_{M \in D_T(G)} \mathrm{wt}(G) \]
        \item[$(b)$] If $W(G)$ ends in either ``$UURURU \cdots U$'' or ``$RRURUR \cdots U$'' (of length $k \geq 2$), 
        then there is a bijection $f \colon D(G^{(-k)}) \to D_R(G)$ satisfying $\mathrm{wt}(f(M)) = b e_2 \cdots e_k \cdot \mathrm{wt}(M)$,
        where $e_1,\dots,e_k$ are the diagonals of the last $k$ tiles in reverse order,
        and $b$ is the label on the right side of the last tile. In particular,
        \[ \sum_{M \in D(G^{(-k)})} \mathrm{wt}(M) = \frac{1}{be_2 \cdots e_k} \sum_{M \in D_R(G)} \mathrm{wt}(G) \]
    \end {itemize}
\end {lemma}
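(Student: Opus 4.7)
The approach is a direct forcing argument analogous to the proof of \Cref{lem:lemma1}. I will focus on part (a); part (b) follows by a mirror-symmetric argument (swapping the roles of top/right and bottom/left throughout, since the two cases differ only by the orientation of the final tile's attachment). The plan is to show that the requirement of having two dimers on the top of $T_n$ propagates a uniquely determined chain of forced doubled dimers through the last $k$ tiles, and that this chain is compatible with any configuration on $G^{(-k)}$.

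The forced propagation unfolds as follows. Two dimers on the top of $T_n$ saturate its two top vertices, so the right edge of $T_n$ must have zero dimers (its top endpoint is already full), which in turn forces two dimers on the bottom edge of $T_n$ to saturate the bottom-right corner. Since $T_n$ is R-attached to $T_{n-1}$ (as $w_k = R$), by the snake-graph labeling conventions of \Cref{def:Ggamma} the bottom of $T_n$ carries the diagonal of $T_{n-1}$, i.e., the label $e_2$. Iterating this zig-zag through the alternating suffix of the word: the doubled bottom edge of $T_n$ together with the zero-dimer shared left edge propagates a new saturation into $T_{n-1}$. At each subsequent tile whose attachment letter differs from its predecessor's, a new doubled edge is forced on the opposite side of that tile, and by the labeling conventions its label is precisely $e_j$ (the diagonal of the next tile back). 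The initial ``RR'' or ``UU'' pair at the start of the pattern is exactly the configuration needed for the propagation to terminate at the boundary with $T_{n-k}$ without forcing any additional constraints on $M$.

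The bijection $f \colon D(G^{(-k)}) \to D_T(G)$ adjoins to $M$ these $k$ forced doubled dimers (together with the determined zero-dimer edges in the last $k$ tiles). One checks that for every $M \in D(G^{(-k)})$ the extension is a valid double dimer cover of $G$: the only shared boundary is the glued edge between $T_{n-k}$ and $T_{n-k+1}$, whose dimer count is inherited directly from $M$, and the vertices on this boundary remain correctly saturated because all forced edges in $T_{n-k+1}$ touching those vertices carry zero dimers. Conversely, any $M' \in D_T(G)$ necessarily displays this forced configuration on the last $k$ tiles (by the deterministic propagation argument), so removing those doubled edges recovers a cover of $G^{(-k)}$. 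The weight calculation is immediate from \Cref{def:weight-M}: each doubled edge contributes its label to the product, yielding the factor $b e_2 \cdots e_k$, and no new nontrivial cycles are introduced by the local zig-zag doublings, so the cycle weights of $f(M)$ match those of $M$.

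The main obstacle will be the careful bookkeeping required to identify the labels of the $k$ doubled edges with the diagonals $e_2, \ldots, e_k$. This rests on the observation, extracted from \Cref{def:Ggamma}, that an R-attached tile carries the diagonal of its predecessor on its bottom edge, while a U-attached tile carries it on its left edge. Checking this identification at each bend in the zig-zag requires a small case analysis based on the pattern of $W(G)$ near its end, but the two pattern types ``$RRURUR\cdots R$'' and ``$UURURU\cdots R$'' are handled by parallel arguments distinguished only by which of the two opening letters starts the alternation.
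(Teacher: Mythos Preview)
Your proposal is correct and follows essentially the same approach as the paper's proof: a forcing argument showing that a doubled top edge on the last tile propagates a uniquely determined zig-zag of doubled edges back through the staircase, terminating cleanly at the repeated letter (the ``RR'' or ``UU''), with the edge labels identified via \Cref{def:Ggamma} as $b, e_2, \ldots, e_k$. Your treatment is somewhat more explicit than the paper's (which leans heavily on a figure), particularly in spelling out why the propagation halts without constraining $G^{(-k)}$ and in noting that no new cycles are created so the odd part of the weight is unchanged.
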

\begin {proof}
    Supposing the snake graph ends in a staircase which ends with ``R'', then the last $k$ tiles of $G$ look
    like the picture in \Cref{fig:lemma2_proof_NEW}. Note that because of the snake graph, i.e. see \Cref{def:Ggamma}, if $e_i$ is the label of the diagonal of
    the $i$th tile counting from the end (e.g. $e_1$ is the label of the diagonal of the last tile), then the W edge or S edge (depending on whichever is a
    boundary edge) of the succeeding tile is labeled $e_i$.  Similarly, the N edge or E edge of the previous tile is labeled $e_i$.  It is easy to see that if the top
    edge of the last tile is occupied by two dimers, then the opposite edge (the bottom) must also have a double edge. Consequently, the left side of the
    previous tile must use a double edge. Going on, one can see that this uniquely determines a double dimer cover on the rest of the staircase
    segment where one edge of each corner is used. This is pictured in \Cref{fig:lemma2_proof_NEW} (Left). 
    It is easily seen that the weight of these dimers is the product $b e_2 \cdots e_k$.  Removing the last $k$ tiles leads to $G^{(-k)}$ as pictured in   
    \Cref{fig:lemma2_proof_NEW} (Right).
\end {proof}

   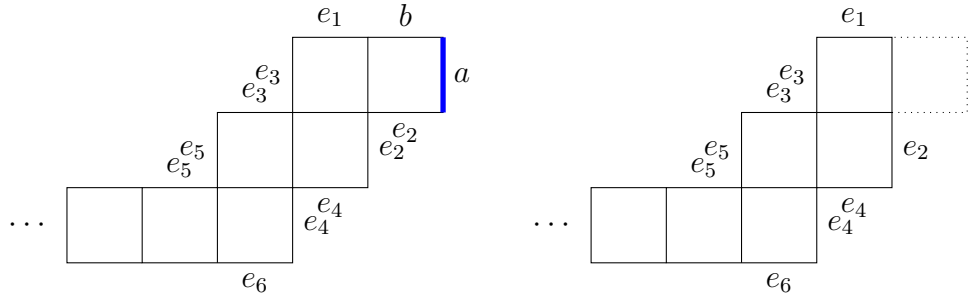
\begin{figure}[h]
   \begin {tikzpicture}[scale=1]
    \draw (0,0) -- (-1,0) -- (-1,1) -- (0,1);
    \draw (0,0) -- ++(2,0) -- ++(0,1) -- ++(1,0) -- ++(0,2) -- ++(-1,0) -- ++(0,-1) -- ++(-1,0) -- ++(0,-1) -- ++(-1,0) -- cycle;
    \draw (1,0) -- ++(0,1) -- ++(1,0) -- ++(0,1) -- ++(1,0);
    \draw (3,2) -- ++(1,0) -- ++(0,1) -- ++(-1,0);

    \draw (-1.5,0.5) node {$\cdots$};

    \draw (2.5,3) node[above] {$e_1$};
    
    \draw [blue,line width =2] (4,3)-- (4,2);
    \draw [draw=none] (4,3)--node [right] {$a$} (4,2);

    \draw (3.5,3) node[above] {$b$};
    %\draw[blue, line width = 2] (3,3) -- (4,3);

    \draw (3.5,2) node[below] {$e_2$};
    \draw (3,1.5) node[right] {$e_2$};
    %\draw[blue, line width = 2] (3,2) -- (4,2);

    \draw (1.5,2) node[above] {$e_3$};
    \draw (2,2.5) node[left]  {$e_3$};
    %\draw[blue, line width = 2] (2,2) -- (2,3);

    \draw (2.5,1) node[below] {$e_4$};
    \draw (2,0.5) node[right] {$e_4$};
    %\draw[blue, line width = 2] (2,1) -- (3,1);

    \draw (1,1.5) node[left]  {$e_5$};
    \draw (0.5,1) node[above] {$e_5$};
    %\draw[blue, line width = 2] (1,1) -- (1,2);

    \draw (1.5,0) node[below] {$e_6$};
    %\draw[blue, line width = 2] (1,0) -- (2,0);
\end {tikzpicture}
\quad
\begin {tikzpicture}[scale=1]
    \draw (0,0) -- (-1,0) -- (-1,1) -- (0,1);
    \draw (0,0) -- ++(2,0) -- ++(0,1) -- ++(1,0) -- ++(0,2) -- ++(-1,0) -- ++(0,-1) -- ++(-1,0) -- ++(0,-1) -- ++(-1,0) -- cycle;
    \draw (1,0) -- ++(0,1) -- ++(1,0) -- ++(0,1) -- ++(1,0);
    \draw [dotted] (3,2) -- ++(1,0) -- ++(0,1) -- ++(-1,0);

    \draw (-1.5,0.5) node {$\cdots$};

    \draw (2.5,3) node[above] {$e_1$};

    \draw (3,1.5) node[right] {$e_2$};

    \draw (1.5,2) node[above] {$e_3$};
    \draw (2,2.5) node[left]  {$e_3$};

    \draw (2.5,1) node[below] {$e_4$};
    \draw (2,0.5) node[right] {$e_4$};

    \draw (1,1.5) node[left]  {$e_5$};
    \draw (0.5,1) node[above] {$e_5$};

    \draw (1.5,0) node[below] {$e_6$};

\end {tikzpicture}
   \caption{Parts of double dimer covers  
   which use the edge $a$ twice}
   \label{fig:proof_c}
   \end{figure}

\begin {figure}[h]
\centering
\begin {tikzpicture}[scale=1]
    \draw (0,0) -- (-1,0) -- (-1,1) -- (0,1);
    \draw (0,0) -- ++(2,0) -- ++(0,1) -- ++(1,0) -- ++(0,2) -- ++(-1,0) -- ++(0,-1) -- ++(-1,0) -- ++(0,-1) -- ++(-1,0) -- cycle;
    \draw (1,0) -- ++(0,1) -- ++(1,0) -- ++(0,1) -- ++(1,0);
    \draw (3,2) -- ++(1,0) -- ++(0,1) -- ++(-1,0);

    \draw (-1.5,0.5) node {$\cdots$};

    \draw (3.5,3) node[above] {$b$};
    \draw[blue, line width = 2] (3,3) -- (4,3);

    \draw [draw=none] (4,3)--node [right] {$a$} (4,2);

    \draw (2.5,3) node[above] {$e_1$};

    \draw (3,1.5) node[right] {$e_2$};
    \draw (3.5,2) node[below] {$e_2$};
    \draw [blue, line width = 2] (3,2) -- (4,2);

    \draw (1.5,2) node[above] {$e_3$};
    \draw (2,2.5) node[left]  {$e_3$};
    \draw [blue, line width = 2] (2,2) -- (2,3);

    \draw (2.5,1) node[below] {$e_4$};
    \draw (2,0.5) node[right] {$e_4$};
    \draw [blue, line width = 2] (2,1) -- (3,1);

    \draw (1,1.5) node[left]  {$e_5$};
    \draw (0.5,1) node[above] {$e_5$};
    \draw[blue, line width = 2] (1,1) -- (1,2);

    \draw (1.5,0) node[below] {$e_6$};
    \draw[blue, line width = 2] (1,0) -- (2,0);    

\end {tikzpicture}
\quad
\begin {tikzpicture}[scale=1]
    \draw (0,0) -- (-1,0) -- (-1,1) -- (0,1) -- (0,0);
    \draw [dotted] (0,0) -- ++(2,0) -- ++(0,1) -- ++(1,0) -- ++(0,2) -- ++(-1,0) -- ++(0,-1) -- ++(-1,0) -- ++(0,-1) -- ++(-1,0) -- cycle;
    \draw [dotted] (1,0) -- ++(0,1) -- ++(1,0) -- ++(0,1) -- ++(1,0);
    \draw [dotted] (3,2) -- ++(1,0) -- ++(0,1) -- ++(-1,0);

    \draw (-1.5,0.5) node {$\cdots$};

    \draw (2.5,3) node[above] {$e_1$};
    
    \draw [draw=none] (4,3)--node [right] {$a$} (4,2);

    \draw (3.5,3) node[above] {$b$};

    \draw (3.5,2) node[below] {$e_2$};
    \draw (3,1.5) node[right] {$e_2$};

    \draw (1.5,2) node[above] {$e_3$};
    \draw (2,2.5) node[left]  {$e_3$};

    \draw (2.5,1) node[below] {$e_4$};
    \draw (2,0.5) node[right] {$e_4$};

    \draw (1,1.5) node[left]  {$e_5$};
    \draw (0.5,1) node[above] {$e_5$};

    \draw (1.5,0) node[below] {$e_6$};
\end {tikzpicture}

\caption {Parts of double dimer covers ending in a ``staircase'' which use the edge $b$ twice}
\label {fig:lemma2_proof_NEW}
\end {figure}
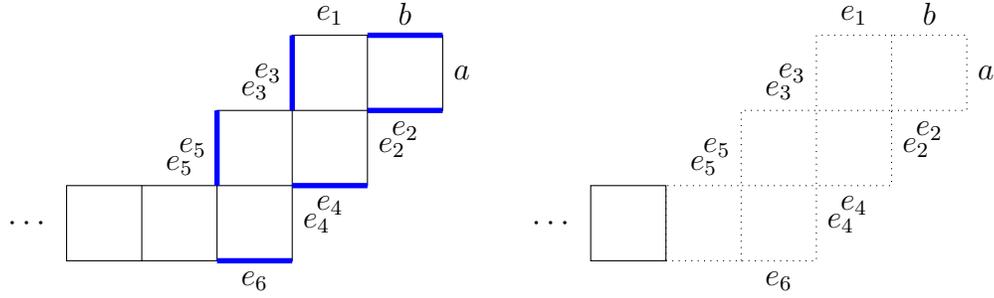

\bigskip

\begin {lemma} \label{lem:lemma3}
    \ \\
    \begin {itemize}
        \item[$(a)$] If $W(G) = (RU)^nR$ or $W(G) = (UR)^n$, 
        then $D_T(G)$ contains a unique double dimer cover  
        with weight $bc e_2 \cdots e_k$ (with $k=2n+2$ or $k=2n+1$),
        where $e_1,\dots,e_k$ are the diagonals of the last $k$ tiles in 
        reverse order, 
        $b$ is the label on the top side of the last tile, and $c$
        is the label on the left or bottom edge of the first tile.
        \item[$(b)$] If $W(G) = (UR)^nU$ or $W(G) = (RU)^n$, 
        then $D_R(G)$ contains a unique double dimer cover  
        with weight $bc e_2 \cdots e_k$ (with $k=2n+2$ or $k=2n+1$),
        where $e_1,\dots,e_k$ are the diagonals of the last $k$ tiles in 
        reverse order, 
        $b$ is the label on the right side of the last tile, and
        $c$ is the label on the left or bottom edge of the first tile.
    \end {itemize}
\end {lemma}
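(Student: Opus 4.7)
The plan is to adapt the forcing argument from the proof of \Cref{lem:lemma2} to the special case in which the entire snake graph $G$ is itself a staircase, so that the propagation does not terminate by meeting a subgraph $G^{(-k)}$ but instead runs all the way down to the first tile $T_1$.

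First I would treat case (a). Placing two dimers on the top edge (labelled $b$) of the last tile $T_k$ forces, by the same vertex-covering argument used in \Cref{lem:lemma2}, a double dimer on the opposite (bottom) edge of $T_k$, and then a uniquely determined zigzag of doubled edges marching backward through the staircase, contributing the factor $b\, e_2 \cdots e_k$ exactly as before. The new phenomenon appears at $T_1$: since there is no preceding tile to absorb the remaining constraints, the two vertices lying on the outermost side of $T_1$ (its left edge when $W(G)$ begins with $R$, its bottom edge when $W(G)$ begins with $U$) can only be covered by placing two dimers on that edge itself. That edge carries label $c$, producing the extra factor not present in \Cref{lem:lemma2}. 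A routine count confirms that every vertex of $G$ is covered exactly twice by this configuration (the staircase has $2k+2$ vertices, matched by the $2(k+1)$ dimer endpoints from the $k+1$ doubled edges), and uniqueness is automatic because no step of the propagation ever offered a choice.

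Case (b) is handled by the symmetric argument, starting from two dimers on the right edge of $T_k$ and propagating in the mirrored orientation until it closes off on the outermost edge of $T_1$. The main (and essentially only) obstacle is bookkeeping: I need to check that the labels of the successively forced edges are indeed $e_2, \ldots, e_k$, and then $c$, in each of the two subcases of each part. This reduces to tracking the parity of the staircase direction against the snake-graph labelling convention of \Cref{def:Ggamma}, and is not a substantive difficulty.
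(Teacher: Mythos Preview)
Your proposal is correct and follows essentially the same approach as the paper: the paper's proof consists of a single sentence referring back to the forcing argument of \Cref{lem:lemma2} together with a figure depicting the unique cover, and your write-up simply spells out that same propagation explicitly, including the terminal step at $T_1$ that produces the factor $c$.
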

\begin {proof}
    The proof is essentially the same as for \Cref{lem:lemma2}, using \Cref{fig:lemma2_proof} which illustrates the case of part (a).
\end {proof}

\bigskip

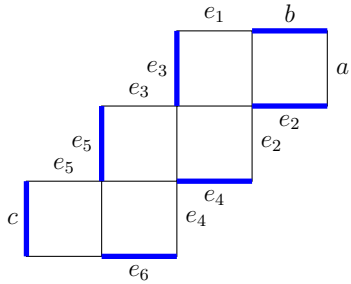
\begin {figure}[h]
\centering
\begin {tikzpicture}[scale=1.0, every node/.style = {scale=0.8}]
    \draw (0,0) -- ++(2,0) -- ++(0,1) -- ++(1,0) -- ++(0,2) -- ++(-1,0) -- ++(0,-1) -- ++(-1,0) -- ++(0,-1) -- ++(-1,0) -- cycle;
    \draw (1,0) -- ++(0,1) -- ++(1,0) -- ++(0,1) -- ++(1,0);
    \draw (3,2) -- ++(1,0) -- ++(0,1) -- ++(-1,0);

    \draw (4,2.5) node[right] {$a$};

    \draw (2.5,3) node[above] {$e_1$};

    \draw (3.5,3) node[above] {$b$};
    \draw[blue, line width = 2] (3,3) -- (4,3);

    \draw (3.5,2) node[below] {$e_2$};
    \draw (3,1.5) node[right] {$e_2$};
    \draw[blue, line width = 2] (3,2) -- (4,2);

    \draw (1.5,2) node[above] {$e_3$};
    \draw (2,2.5) node[left]  {$e_3$};
    \draw[blue, line width = 2] (2,2) -- (2,3);

    \draw (2.5,1) node[below] {$e_4$};
    \draw (2,0.5) node[right] {$e_4$};
    \draw[blue, line width = 2] (2,1) -- (3,1);

    \draw (1,1.5) node[left]  {$e_5$};
    \draw (0.5,1) node[above] {$e_5$};
    \draw[blue, line width = 2] (1,1) -- (1,2);

    \draw (1.5,0) node[below] {$e_6$};
    \draw[blue, line width = 2] (1,0) -- (2,0);

    \draw (0,0.5) node[left] {$c$};
    \draw[blue, line width = 2] (0,0) -- (0,1);
\end {tikzpicture}
\caption {The unique double dimer cover  
on a ``staircase'' using the edge $b$ twice}
\label {fig:lemma2_proof}
\end {figure}

\bigskip

We will now define an involution on the set of monomials of the super-algebra $A = \Bbb{R}[x_{ij}^{\pm 1/2} ~|~ \theta_k]$, given
by \emph{toggling} $\theta_n$.

\bigskip

\begin {definition} \label{def:star_involutions}
    Let $x \in A$ be a monomial, written in the positive order.
    If $x$ contains $\theta_n$, then $x^*$ is defined as $x$ with $\theta_n$ removed. If $x$ does not
    contain $\theta_n$, then $x^*$ is defined by inserting $\theta_n$ such that the positive order is preserved.
\end {definition}

\bigskip

\begin {example}
    Let $n=4$, and suppose the positive order is $\theta_1 > \theta_3 > \theta_4 > \theta_2$. 
    Then $(\theta_1 \theta_3 \theta_2)^* = \theta_1\theta_3 \theta_4\theta_2$, and $(\theta_1\theta_4)^* = \theta_1$.
\end {example}

\bigskip

\begin {lemma} \label{lem:lemma4}
    Suppose that $G$ contains $(n-1)$ tiles, so that $\theta_{n-1}$ (resp. $\theta_n$) labels the bottom-left (resp. top-right) triangle of the last tile.
    Let $x^\dagger$ and $x^\ast$ denote the involutions corresponding to $\theta_{n-1}$ and $\theta_n$, respectively.
    \begin {itemize}
        \item[$(a)$] If $W(G)$ ends in ``R'' , 
        then there is a bijection $f \colon D_r(G^{(-1)}) \to D_{tr}(G)$ satisfying
        $\mathrm{wt}(f(M)) = \sqrt{\frac{abc}{d}} \, \left( \mathrm{wt}(M)^\dagger \right)^\ast$,
        where ``$d$'' is the label on the left edge of the last tile, and $a,b,c$ are the labels on the other three sides.
        In particular,
        \[ \sum_{M \in D_r(G^{(-1)})} \mathrm{wt}(M) = \sqrt{\frac{d}{abc}} \sum_{M \in D_{tr}(G)} \left( \mathrm{wt}(M)^\ast \right)^\dagger \]
        \item[$(b)$] If $W(G)$ ends in ``U'' , 
        then there is a bijection $f \colon D_t(G^{(-1)}) \to D_{tr}(G)$ satisfying
        $\mathrm{wt}(f(M)) = \sqrt{\frac{abc}{d}} \, \left( \mathrm{wt}(M)^\dagger \right)^*$,
        where ``$d$'' is the label on the bottom edge of the last tile, and $a,b,c$ are the labels on the other three sides.
        In particular,
        \[ \sum_{M \in D_t(G^{(-1)})} \mathrm{wt}(M) = \sqrt{\frac{d}{abc}} \sum_{M \in D_{tr}(G)} \left( \mathrm{wt}(M)^\ast \right)^\dagger \]
    \end {itemize}
\end {lemma}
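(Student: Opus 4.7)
The plan is to define the bijection $f \colon D_r(G^{(-1)}) \to D_{tr}(G)$ by a single local modification at the last two tiles. Let $T$ denote the last tile of $G$ (so $T$ carries $\theta_{n-1}$ in its bottom-left corner and $\theta_n$ in its top-right corner) and let $T'$ denote the last tile of $G^{(-1)}$, so that the right edge of $T'$ coincides with the left edge of $T$, namely the edge labeled $d$. Given $M \in D_r(G^{(-1)})$, I would define $f(M)$ by removing exactly one copy of the dimer on $d$ and adjoining single dimers on the other three sides of $T$ (carrying labels $a$, $b$, $c$). The hypothesis $M \in D_r(G^{(-1)})$ guarantees that a copy of $d$ is available to remove, and a routine vertex-degree check in a neighborhood of $T' \cup T$ confirms that $f(M) \in D_{tr}(G)$; the reverse operation provides the inverse. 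The dimer contribution to the weight identity is immediate: three new single dimers contribute $\sqrt{abc}$ and the deleted copy of $d$ removes $\sqrt{d}$, yielding the factor $\sqrt{abc/d}$.

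To analyze the cycle contribution I would split $D_r(G^{(-1)}) = D_R(G^{(-1)}) \sqcup D_{tr}(G^{(-1)})$. If $M \in D_R(G^{(-1)})$, the doubled edge $d$ occupies both incidences at the top-right and bottom-right vertices of $T'$, so no cycle of $M$ touches these vertices and neither $\theta_{n-1}$ nor $\theta_n$ appears in $\mathrm{wt}(M)$. Applying $f$ leaves all other cycles of $M$ untouched but creates a new four-cycle around $T$ whose corners are $\theta_{n-1}$ and $\theta_n$, so the cycle weight gains a factor $\theta_{n-1}\theta_n$; this precisely matches the effect of $(\mathrm{wt}(M)^\dagger)^\ast$, which inserts both $\theta_{n-1}$ and $\theta_n$ in positive order since neither was present. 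If instead $M \in D_{tr}(G^{(-1)})$, then the single dimer on $d$ belongs to some cycle $C$ of $M$; a short case analysis on how $T'$ is attached to the preceding portion of the snake graph (first tile, or attached via its left or bottom to a previous tile) shows that the top-right corner of $C$ is always the top-right vertex of $T'$, so $\theta_{n-1}$ appears in the weight of $C$. Under $f$ the cycle $C$ is rerouted by replacing its edge $d$ with the three new single dimers around $T$, producing an enlarged cycle $C'$ whose bottom-left corner is unchanged but whose top-right corner is now the $\theta_n$ triangle at the top-right of $T$. Thus $\theta_{n-1}$ is replaced by $\theta_n$ in the cycle weight, which is again $(\cdot^\dagger)^\ast$.

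The main technical obstacle is to verify that signs arising from the anticommutativity of the odd variables are consistent under the positive order: one must check that the insertions and deletions of $\theta_{n-1}$ and $\theta_n$ performed by the involutions $\dagger$ and $\ast$ produce the same signs as the reorderings required when placing the cycle weights in positive order, which is handled using the explicit description of the positive order around the fan containing the triangles of $T$ (\Cref{rem:pos_order}). Part (b), where $W(G)$ ends in ``U'', then follows from the symmetric argument with the roles of the top and right edges of $T$ (and of the right and top edges of $T'$) interchanged.
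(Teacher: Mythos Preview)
Your proposal is correct and follows essentially the same approach as the paper: the bijection is defined by the identical local move (delete one copy of the dimer on $d$, add single dimers on $a,b,c$), and the weight analysis proceeds via the same two-case split $D_r(G^{(-1)}) = D_R(G^{(-1)}) \sqcup D_{tr}(G^{(-1)})$, creating a new cycle around $T$ in the first case and extending an existing cycle in the second. The paper handles the sign issue you flag more implicitly, simply invoking that both the weight (\Cref{def:weight-M}) and the involutions (\Cref{def:star_involutions}) are defined with respect to the positive order, so no discrepancy can arise; your appeal to \Cref{rem:pos_order} is a reasonable way to make this explicit.
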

\begin {proof}
    Suppose $W(G)$ ends in ``R''. Let $M \in D_r(G^{(-1)})$. Then we define $f(M)$ by first erasing a dimer from the right edge of the last tile of $G^{(-1)}$,
    and then add single dimers on the top, right, and bottom edges of the last tile of $G$. This is illustrated in \Cref{fig:proof_lemma4}. Clearly,
    this has the effect of multiplying the weight of $M$ by a factor $\sqrt{\frac{abc}{d}}$. But it also changes the odd variables, since the cycles at the
    end surround a different set of tiles.

    There are two cases, corresponding to the two pictures in \Cref{fig:proof_lemma4}. First consider the top picture, corresponding to $M \in D_R(G^{(-1)})$.
    Since $M$ does not contain a cycle surrounding the last tile, $\mathrm{wt}(M)$ does not include the variable $\theta_{n-1}$. Looking at the figure,
    we see that $f(M)$ contains a cycle around just the last tile, which means we must multiply by $\theta_{n-1}\theta_n$. This is the same as applying both
    the $\dagger$ and $\ast$ involutions (the $\dagger$ multiplies by $\theta_{n-1}$ and the $\ast$ multiplies by $\theta_n$).  Following \Cref{def:star_involutions}, the odd variables, including $\theta_{n-1}$ and $\theta_n$, are written in the positive order.

    In the second case, $M$ has a cycle surrounding the last tile of $G^{(-1)}$, and so $\mathrm{wt}(M)$ includes a factor of $\theta_{n-1}$.
    The effect of $f$ is to extend the cycle around the last tile of $G$. So we need to exchange $\theta_{n-1}$ for $\theta_n$. Again applying both
    involutions acheives this effect, since $\dagger$ will remove $\theta_{n-1}$, and then $\ast$ will add $\theta_n$ in the proper place relative to positive order.

    \begin {figure}[h!]
    \centering
    \begin {tikzpicture}[scale=1.4, every node/.style = {scale=0.9}]
        \tikzset {decoration={snake, amplitude=0.5mm}}

        \draw (0,0) -- (2,0) -- (2,1) -- (0,1) -- cycle;
        \draw (1,0) -- (1,1);

        \draw (-0.5,0.5) node {$\cdots$};
        \draw (1.5,1) node[above] {$a$};
        \draw (2,0.5) node[right] {$b$};
        \draw (1.5,0) node[below] {$c$};
        \draw (1,0.5) node[left]  {$d$};

        \draw (1.8,0.8) node {\tiny $\theta_n$};
        \draw (1.3,0.2) node {\tiny $\theta_{n-1}$};
        \draw (0.7,0.8) node {\tiny $\theta_{n-1}$};

        \draw[blue, line width = 2] (1,0) -- (1,1);

        \draw[-latex] (2.5,0.5) -- (5,0.5);
        \draw (3.75,0.5) node[above] {$f$};

        \begin {scope} [shift={(6,0)}]
            \draw (0,0) -- (2,0) -- (2,1) -- (0,1) -- cycle;
            \draw (1,0) -- (1,1);
            \draw (-0.5,0.5) node {$\cdots$};
            \draw (1.5,1) node[above] {$a$};
            \draw (2,0.5) node[right] {$b$};
            \draw (1.5,0) node[below] {$c$};
            \draw (1,0.5) node[left]  {$d$};

            \draw (1.8,0.8) node {\tiny $\theta_n$};
            \draw (1.3,0.2) node {\tiny $\theta_{n-1}$};
            \draw (0.7,0.8) node {\tiny $\theta_{n-1}$};

            \draw[orange, line width = 2, decorate] (1,0) -- (1,1); 
            \draw[orange, line width = 2, decorate] (1,1) -- (2,1); 
            \draw[orange, line width = 2, decorate] (2,1) -- (2,0); 
            \draw[orange, line width = 2, decorate] (2,0) -- (1,0); 
        \end {scope}

        \begin {scope} [shift={(0,-2)}]
            \draw (0,0) -- (2,0) -- (2,1) -- (0,1) -- cycle;
            \draw (1,0) -- (1,1);
            \draw (-0.5,0.5) node {$\cdots$};
            \draw (1.5,1) node[above] {$a$};
            \draw (2,0.5) node[right] {$b$};
            \draw (1.5,0) node[below] {$c$};
            \draw (1,0.5) node[left]  {$d$};

            \draw (1.8,0.8) node {\tiny $\theta_n$};
            \draw (1.3,0.2) node {\tiny $\theta_{n-1}$};
            \draw (0.7,0.8) node {\tiny $\theta_{n-1}$};

            \draw[orange, line width = 2, decorate] (0,1) -- (1,1); 
            \draw[orange, line width = 2, decorate] (1,1) -- (1,0); 
            \draw[orange, line width = 2, decorate] (1,0) -- (0,0); 

            \draw[-latex] (2.5,0.5) -- (5,0.5);
            \draw (3.75,0.5) node[above] {$f$};
        \end {scope}

        \begin {scope} [shift={(6,-2)}]
            \draw (0,0) -- (2,0) -- (2,1) -- (0,1) -- cycle;
            \draw (1,0) -- (1,1);
            \draw (-0.5,0.5) node {$\cdots$};
            \draw (1.5,1) node[above] {$a$};
            \draw (2,0.5) node[right] {$b$};
            \draw (1.5,0) node[below] {$c$};
            \draw (1,0.5) node[left]  {$d$};

            \draw (1.8,0.8) node {\tiny $\theta_n$};
            \draw (1.3,0.2) node {\tiny $\theta_{n-1}$};
            \draw (0.7,0.8) node {\tiny $\theta_{n-1}$};

            \draw[orange, line width = 2, decorate] (0,1) -- (2,1); 
            \draw[orange, line width = 2, decorate] (2,1) -- (2,0); 
            \draw[orange, line width = 2, decorate] (2,0) -- (0,0); 
        \end {scope}
    \end {tikzpicture}
    \caption {Proof of \Cref{lem:lemma4}}
    \label {fig:proof_lemma4}
    \end {figure}
\end {proof}

\bigskip

\section {The Main Formula}

\bigskip

In this section we give a double dimer interpretation for the terms of the Laurent expansion of $\lambda$-lengths and certain
types of $\mu$-invariants. The proof will induct on the number of triangles in the polygon (equivalently, the number of tiles
of the corresponding snake graph). There are several cases to consider, with subtle differences, depending on the following factors:
\begin {itemize}
    \item Whether there are an even or odd number of triangles/tiles. This affects whether the last tile of the snake graph has 
          either normal or reversed orientation.
    \item Whether the word $W(G)$ of the snake graph ends in ``R'' or ``U''. This affects which versions (parts $(a)$ or $(b)$) of
          Lemmas \ref{lem:lemma1} -- \ref{lem:lemma4} must be used in the induction. This will correspond to interchanging $D_T(G)$ and $D_R(G)$
          in the recursion formulas.
    \item Whether the top-most fan center is on the left (as in \Cref{fig:proof}(a)) or the right. This affects which edge ($c$ or $d$
          in \Cref{fig:proof}) crosses more diagonals.
\end {itemize}

Rather than go through the proof for all possible cases, we will make particular choices for the factors listed above,
so that we are in the case pictured in \Cref{fig:proof}. Namely, we assume that the top-most fan center (the vertex labelled $j$ in \Cref{fig:proof})
is on the left (adjacent to edge $a$, and not $b$), and that the triangulation has an odd number of triangles.
This actually determines the third condition (whether $W(G)$ ends with ``R'' or ``U''),
which we now explain in the following lemma.

\begin {lemma} \label{lem:parity} \
    \begin {enumerate}
         \item[$(a)$] Suppose a triangulated polygon is oriented as in \Cref{fig:proof}, so that the top-most fan center (vertex $j$) is on the left side of the polygon.
                      Then $W(G)$ ends in ``R'' if and only if there are an odd number of triangles (even number of tiles in $G$), and
                      $W(G)$ ends in ``U'' if and only if there are an even number of triangles (odd number of tiles in $G$).
         \item[$(b)$] Suppose a triangulated polygon is oriented opposite of \Cref{fig:proof}, so that the top-most fan center is on the right side of the polygon.
                      Then $W(G)$ ends in ``R'' if and only if there are an even number of triangles (odd number of tiles in $G$), and
                      $W(G)$ ends in ``U'' if and only if there are an odd number of triangles (even number of tiles in $G$).
    \end {enumerate}
\end {lemma}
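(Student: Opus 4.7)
The strategy is to read off the last letter of $W(G)$ by tracking the geometry of the last two tiles $T_{k-1}$ and $T_k$. The central tool is what I will call the \emph{corner lemma}: writing $x_i = (u,v)$ with $u$ on the left of $\gamma$ and $v$ on the right, the vertex $u$ sits at the NW corner of $T_i$ when $i$ is odd and at the SE corner when $i$ is even (with $v$ occupying the opposite corner). I would prove this by induction on $i$: the base case uses the convention that places $a$ at the SW corner of $T_1$ and reads the cyclic order of the quadrilateral around $x_1$ starting at $a$; the inductive step tracks how the alternation between ``normal'' and ``reversed'' tile orientations swaps the NW/SE placement of the diagonal's endpoints as one passes from $T_i$ to $T_{i+1}$.

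Granting the corner lemma, the next step is to identify the shared polygon vertex of $x_{k-1}$ and $x_k$. Because $c_N$ is the center of the top fan, it is shared by at least two crossed diagonals, so $x_k$ must be an interior diagonal of fan $N$ of the form $x_k = (c_N, w)$. The previous diagonal $x_{k-1}$ is then either another interior of fan $N$ or the fan boundary $(c_{N-1}, c_N)$; an easy side-of-$\gamma$ check rules out any other possibility, since a diagonal joining $c_{N-1}$ to $w$ would have both endpoints on the same side of $\gamma$ and thus could not cross $\gamma$. In either admissible case $c_N$ is an endpoint of $x_{k-1}$, so $x_{k-1}$ and $x_k$ share $c_N$. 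Letting $r$ be the other endpoint of $x_{k-1}$, the fact that $x_{k-1}$ crosses $\gamma$ forces $r$ to lie on the side of $\gamma$ opposite to $c_N$.

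Under the hypothesis of part $(a)$, $c_N$ lies on the left of $\gamma$, hence $r$ lies on the right. The corner lemma applied to $T_{k-1}$ then says $r$ sits at the SE corner when $k-1$ is odd and at the NW corner when $k-1$ is even. The tile $T_k$ is glued to $T_{k-1}$ along the third edge of $\theta_k$, which runs from the NE-apex of $T_{k-1}$ to $r$: if $r$ is at SE the glue is the right edge of $T_{k-1}$, placing $T_k$ to the right and making the last letter of $W(G)$ equal to R; if $r$ is at NW the glue is the top edge and the last letter is U. Since the number of triangles is $k+1$, the parities of $k-1$ and of the triangle count agree, and the equivalences in $(a)$ follow. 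Part $(b)$ is the mirror-image argument with left and right (equivalently R and U) swapped throughout.

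The main obstacle is the corner lemma itself. Making the induction precise requires a careful case split on which of the two ways $x_{i+1}$ can branch off the shared triangle $\theta_{i+1}$, keeping track of which endpoint of $x_i$ becomes the SW-apex of $T_{i+1}$ under the change of tile orientation, and invoking the fact that the ``new'' endpoint of $x_{i+1}$ must sit on the opposite side of $\gamma$ from its partner so that the parity flip interacts correctly with the NW/SE placement.
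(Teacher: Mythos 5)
Your approach is correct and genuinely different from the paper's. The paper inducts on the number of fan segments: it treats the single-fan case as a base case (arguing that the initial letter of $W(G)$ and the alternating staircase pattern determine the final letter), then decomposes $W(G) = W(G')W'$ where $G'$ is the snake graph of the first $n-1$ fan segments, and tracks how the parity of $|W'|$ propagates both the last letter and the tile count. You instead give a purely local argument at the last two tiles, anchored on your "corner lemma" (left endpoint of $x_i$ at NW iff $i$ odd). I checked the corner lemma and your gluing analysis against the worked hexagon examples in the paper (Figure~\ref{fig:labelled_snake_graphs}), and both hold: for instance in the second hexagon there, with $\gamma$ from the lower-left to upper-left vertex and fan center on the left, one indeed finds the left endpoint of $x_1,x_2,x_3$ at NW, SE, NW respectively, and your deduction of the last letter from the placement of $r$ gives the correct answer.

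The comparison: your argument is more concrete and self-contained --- it never needs to reason about whole fan segments, just the shared vertex $c_N$ of $x_{k-1}$ and $x_k$ and the alternating tile orientation. The paper's argument is shorter on the page but pushes the real content into the assertion about what letter $W(G)$ begins with for a single fan, which the paper leaves unjustified ("this is easy to see"). Your route produces, en route, a structural fact (the corner lemma) that is independently useful for reasoning about snake graphs; this is the same alternation-of-orientation phenomenon that underlies the sign/direction functions in the snake-graph literature, so it is worth stating explicitly.

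Two small points. First, you correctly flag that the corner lemma is the real work; as written this is a proof \emph{strategy} rather than a complete proof, and the inductive step needs the careful side-of-$\gamma$ bookkeeping you describe. It is provable, but the case analysis (which endpoint of $x_i$ survives into $x_{i+1}$, and whether $T_{i+1}$ sits to the right or above) should be written out rather than gestured at. Second, your claim that "$x_k$ must be an interior diagonal of fan $N$" is not quite forced in the degenerate case where fan $N$ contributes only one triangle; what you actually need and use is just that $x_k$ and $x_{k-1}$ both have $c_N$ as an endpoint, which does hold because every crossed diagonal lying in (or bounding) the top fan segment is incident to its center $c_N$, and the top fan segment always contains at least two triangles. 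Phrasing it this way avoids the interior-vs-boundary distinction entirely.
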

\begin {proof}
    In the case of a fan triangulation (i.e. $j$ is the only fan center), this is easy to see. By the construction outlined in
    \Cref{def:Ggamma}, the assumption that the fan center $j$ is on the left implies that $W(G)$ always begins with a ``U''.
    The result then follows for the case of a single fan segment. The argument for part $(b)$ in a single fan is the same,
    since $W(G)$ will always begin with ``R'' in that case.

    Now we will induct on the number of fan segments. Suppose parts $(a)$ and $(b)$ are true for all triangulations with $n-1$ fan segments.
    We will consider the case of $n$ fan segments. Let $G'$ be the smaller snake graph corresponding
    to the longest arc in the polygon containing only the first $n-1$ fan segments. Decompose $W(G)$ as $W(G') W'$, where $W'$ is the remainder of $W(G)$.
    Note that by the construction in \Cref{def:Ggamma}, the first letter of $W'$ is the same as the last letter of $W(G')$. This means if $W'$ has odd
    length, then $W(G)$ and $W(G')$ end in the same letter, and if $W'$ has even length, then $W(G)$ and $W(G')$ end in the opposite letter.

    Also note that if $W'$ has even length, then the parity of the numbers of tiles of $G$ and $G'$ are the same, while if $W'$ has odd length,
    the parity of the numbers of tiles in $G$ and $G'$ are opposite. Combining these observations with the induction assumption gives the result.
\end {proof}

\bigskip

\begin{figure}[h]
\centering
\begin{tikzpicture}
\tikzstyle{every path}=[draw] 
		\path
    node[
      regular polygon,
      regular polygon sides=13,
      draw=none,
      inner sep=1.8cm,
    ] (T) {}
    %
    % Annotations
    (T.corner 1) node[above] {$l$}
    (T.corner 2) node[above] {$k$}
    (T.corner 3) node[left] {$j$}
    (T.corner 4) node[left] {}
    (T.corner 5) node[below] {}
    (T.corner 6) node[below] {}
    (T.corner 7) node[below] {}
    (T.corner 8) node[below] {$i$}
    (T.corner 9) node[right] {}
    (T.corner 10) node[right] {$m$}
    (T.corner 11) node[right] {}
    (T.corner 12) node[right ]{}
    (T.corner 13) node [right] {}
;
    \begin {scope} [rotate=15]
        \foreach \t [evaluate=\t as \s using int(\t-1)] in {1,...,13} {\coordinate (\s) at (T.corner \t);}
        \draw [-] (0)--(12);
        \foreach \t [evaluate=\t as \s using int(\t+1)] in {2,...,12}{\draw [-] (T.corner \t)--(T.corner \s);}

        \draw [-] (0)--node [above] {$h$} (12);

        \draw [-] (1)--node [above] {$a$} (2);
        \draw [-] (1)--node [above] {$b$} (0);
        \draw [dashed] (2) --node [left, inner sep=0.2em] {$d$} (7) -- node [left] {$c$} (0);
        \draw [-] (2)--node [midway] {$e$} (0);
        \draw [-](2)--node [midway] {$e_2$} (12);
        \draw [dotted](2)--(11);
        \draw[-](2)--node[midway] {$e_k$}(10);
        \draw [-] (2)--(9);
        \draw [dotted] (2)--(10);
        \draw (5)--(9)--(3);
        \draw [dotted] (9)--(4);
        \draw [dotted](5)--(8)--(6);
    \end {scope}
    \draw (0,-4) node {$(a)$};

    \begin {scope} [scale=0.7, shift={(8,0)}]
        \draw[dashed] (-2,0) -- (0,-2) -- (2,0);
        \draw         (2,0) -- (0,2) -- (-2,0);
        \draw[->-]    (-2,0) -- (2,0);

        \draw (45:1.77)   node {$b$};
        \draw (135:1.77)  node {$a$};
        \draw (-135:1.77) node {$d$};
        \draw (-45:1.77)  node {$c$};
        \draw (0,0)       node[above] {$e$};

        \draw (0,1) node {$\theta$};
        \draw (0,-1) node {$\sigma$};

        \begin {scope} [shift={(6,0)}]
            \draw[dashed] (-2,0) -- (0,-2) -- (2,0);
            \draw         (2,0) -- (0,2) -- (-2,0);
            \draw[->-]    (0,-2) -- (0,2);

            \draw (45:1.77)   node {$b$};
            \draw (135:1.77)  node {$a$};
            \draw (-135:1.77) node {$d$};
            \draw (-45:1.77)  node {$c$};
            \draw (0,0)       node[right] {$f$};

            \draw (-1,0) node {$\varphi$};
        \end {scope}
    \end {scope}
    \draw (8,-4) node {$(b)$};
\end {tikzpicture}
\caption{$(a)$ Triangulation $T$ with $n$ triangles. $(b)$ Ptolemy relation in the quadrilateral $ijkl$.}
\label{fig:proof}
\end{figure}
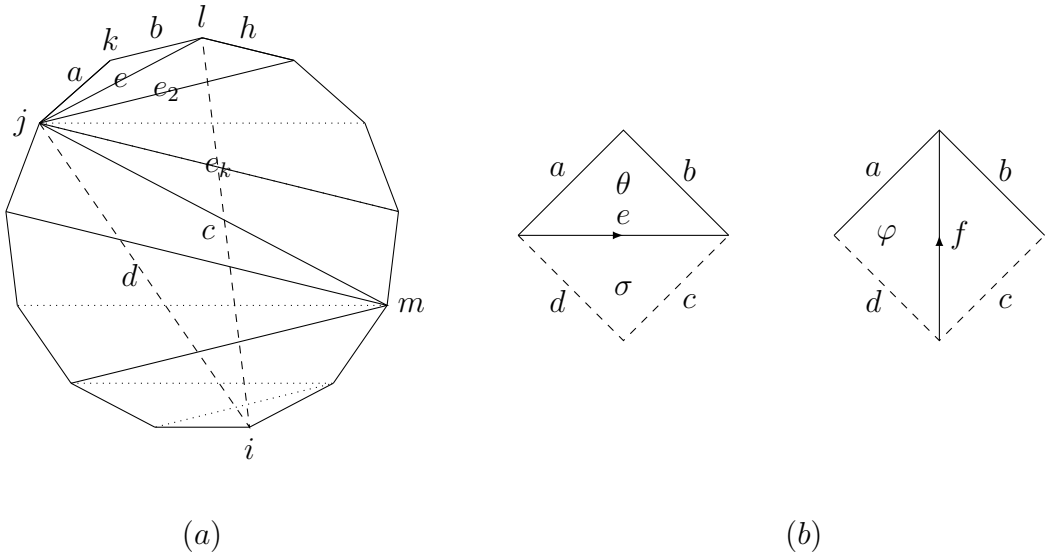
\begin {theorem} \label{thm:main}
    Consider a triangulation as pictured in \Cref{fig:proof} where $f=(i,k)$ is the longest arc, with corresponding snake graph $G$.
    In particular, we assume that $a$ and $b$ are boundary edges, the top-most fan center $j$ is on the left. Then
    \begin {enumerate}
        \item[$(a)$] $\displaystyle f = \frac{1}{\mathrm{cross}(f)} \sum_{M \in D(G)} \mathrm{wt}(M)$
        \item[$(b)$] If the polygon has an odd number of triangles\footnote{By \Cref{lem:parity}, this implies that $W(G)$ ends with ``R''.}, then 
                     \[ \sqrt{df} \; \varphi = \frac{1}{\mathrm{cross}(f)} \; \sqrt{\frac{e}{b}} \sum_{M \in D_t(G)} \mathrm{wt}(M)^* \]
                     where $x \mapsto x^*$ is the involution toggling $\theta_n$ (the top-most triangle). If there are an even number of triangles, then
                     the sum is over {$D_r(G)$} instead of {$D_t(G)$}\footnote{This is because, by \Cref{lem:parity}, $W(G)$ ends with ``U'' in this case.}.
    \end {enumerate}
\end {theorem}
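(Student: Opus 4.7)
The plan is to prove parts (a) and (b) simultaneously by induction on $n$, the number of triangles in the polygon. The base case (small $n$) is a direct verification. In the inductive step, apply the super Ptolemy relation \eqref{eqn:super_ptolemy_lambda} at the quadrilateral $ijkl$ to obtain
\[
f \;=\; \frac{ac + bd + \sqrt{abcd}\,\sigma\theta}{e},
\]
where $c$ and $d$ are the $\lambda$-lengths of the arcs $(i,l)$ and $(i,j)$, both strictly shorter than $f$. The key numerical observation is that the diagonals crossed by $f$ are exactly $\{e\}$ together with the disjoint union of those crossed by $c$ and those crossed by $d$, so that $\mathrm{cross}(f) = e\cdot\mathrm{cross}(c)\cdot\mathrm{cross}(d)$. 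The snake graphs $G_c, G_d$ have strictly fewer tiles than $G$, so the inductive hypothesis applies to them.

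For part (a), decompose $D(G) = D_R(G) \sqcup D_T(G) \sqcup D_{tr}(G)$. By \Cref{lem:parity} the parity of $n$ determines whether $W(G)$ ends in ``R'' or ``U'', and hence which variant of each recurrence lemma applies. Apply \Cref{lem:lemma1}, \Cref{lem:lemma2}, and \Cref{lem:lemma3} (as needed to peel off the staircase at the top of $G$) to rewrite $\sum_{M\in D_R(G)}\mathrm{wt}(M)$ and $\sum_{M\in D_T(G)}\mathrm{wt}(M)$ as dimer sums on $G_c$ and $G_d$; part (a) of the inductive hypothesis then identifies these two contributions with $a\cdot c\cdot\mathrm{cross}(c)\mathrm{cross}(d)$ and $b\cdot d\cdot\mathrm{cross}(c)\mathrm{cross}(d)$, matching the first two terms of Ptolemy. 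For the remaining piece $\sum_{M\in D_{tr}(G)}\mathrm{wt}(M)$, apply \Cref{lem:lemma4}, which introduces the involutions $\ast$ and $\dagger$ and reduces the sum to one of $D_t$- (or $D_r$-) type on a smaller snake graph. Part (b) of the inductive hypothesis then reproduces the mixed term $\sqrt{abcd}\,\sigma\theta\cdot\mathrm{cross}(c)\mathrm{cross}(d)$, completing the inductive step for (a). Part (b) is handled in parallel, using the super Ptolemy relations \eqref{eqn:super_ptolemy_mu_right_1} and \eqref{eqn:super_ptolemy_mu_left_1} to expand $\varphi$, decomposing $D_t(G)$ according to the last tile, and matching term by term; the structural link between parts (a) and (b) runs through \Cref{lem:lemma4}.

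The hard part will be the careful bookkeeping of signs, orderings, and involutions. The positive order of \Cref{def:pos_order} depends on the whole triangulation, so restricting to a sub-triangulation can permute the odd generators; one must verify that the $\ast$ and $\dagger$ involutions from \Cref{def:star_involutions} interact correctly with this restriction and with the positive orders of the smaller triangulations corresponding to $c$ and $d$. The parity dichotomy from \Cref{lem:parity} also forces the argument to split into several nearly identical but orientation-distinct cases. Most delicately, one must confirm that the $D_{tr}(G)$ contribution produces exactly $\sqrt{abcd}\,\sigma\theta/e$, with $\sigma$ and $\theta$ appearing in precisely the order dictated by the default orientation on the edge $e$ being flipped.
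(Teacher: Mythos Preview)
Your overall strategy---simultaneous induction on parts (a) and (b), applying the super Ptolemy relation at the quadrilateral $ijkl$, decomposing $D(G) = D_R(G) \sqcup D_T(G) \sqcup D_{tr}(G)$, and invoking \Cref{lem:lemma1}, \Cref{lem:lemma2}, \Cref{lem:lemma4}---is exactly the paper's approach, including the use of part (b) of the inductive hypothesis to handle the $D_{tr}$ contribution in part (a).

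However, your ``key numerical observation'' is wrong and would break the bookkeeping. The arcs $c = (i,l)$ and $d = (i,j)$ do \emph{not} cross disjoint sets of diagonals: $c$ crosses every diagonal of $T$ except $e$, while $d$ crosses every diagonal except the $k$ diagonals $e = e_1, e_2, \dots, e_k$ of the top fan segment centered at $j$. Thus the diagonals crossed by $d$ form a subset of those crossed by $c$, and the correct identities are
\[
\mathrm{cross}(f) = e \cdot \mathrm{cross}(c)
\qquad\text{and separately}\qquad
\mathrm{cross}(f) = e\, e_2 \cdots e_k \cdot \mathrm{cross}(d),
\]
not $\mathrm{cross}(f) = e \cdot \mathrm{cross}(c) \cdot \mathrm{cross}(d)$. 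This matters concretely: after \Cref{lem:lemma1} and the inductive hypothesis for $c$, one has $\sum_{M\in D_R(G)} \mathrm{wt}(M) = a \cdot \mathrm{cross}(c) \cdot c$, and dividing by $\mathrm{cross}(f) = e \cdot \mathrm{cross}(c)$ gives $ac/e$; with your formula an unwanted factor $\mathrm{cross}(d)$ would survive. Likewise, for the $bd/e$ term, \Cref{lem:lemma2} produces the factor $b\,e_2\cdots e_k$, which combines with $\mathrm{cross}(d)$ to give $\mathrm{cross}(f)/e$ precisely because of the second identity above. (Incidentally, \Cref{lem:lemma3} is not needed in the inductive step; \Cref{lem:lemma2} suffices.)

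Once this is corrected, the rest of your outline goes through. The positive-order check you flag in your last paragraph is indeed the delicate point, and it is resolved just as you anticipate: $\theta = \theta_n$ lies at an extreme of the positive order relative to the smaller triangulation, and every $\mathrm{wt}(M)$ contains an even number of odd variables, so it commutes with $\theta$ without sign.
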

\begin {proof}
    We will prove this by induction on the number of triangles.
    %\bigskip

    \paragraph{\textbf{Base case} }
    
    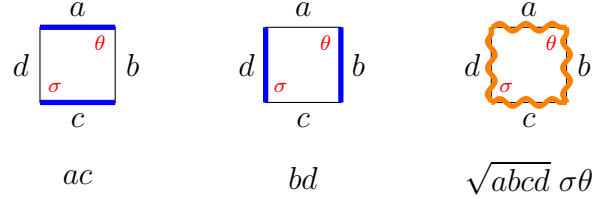
\begin {figure}[h]
    \centering
    \begin {tikzpicture}
        \tikzset {decoration={snake, amplitude=0.5mm}}

        \draw (0,0) -- (1,0) -- (1,1) -- (0,1) -- cycle;
        \draw (0.5,0) node[below] {$c$};
        \draw (0,0.5) node[left]  {$d$};
        \draw (0.5,1) node[above] {$a$};
        \draw (1,0.5) node[right] {$b$};

        \draw (0.2,0.2) node {\color{red} \tiny $\sigma$};
        \draw (0.8,0.8) node {\color{red} \tiny $\theta$};

        \draw[blue, line width = 2] (0,0) -- (1,0);
        \draw[blue, line width = 2] (0,1) -- (1,1);

        \draw (0.5,-1) node {$ac$};

        \begin {scope}[shift={(3,0)}]
            \draw (0,0) -- (1,0) -- (1,1) -- (0,1) -- cycle;
            \draw (0.5,0) node[below] {$c$};
            \draw (0,0.5) node[left]  {$d$};
            \draw (0.5,1) node[above] {$a$};
            \draw (1,0.5) node[right] {$b$};

            \draw (0.2,0.2) node {\color{red} \tiny $\sigma$};
            \draw (0.8,0.8) node {\color{red} \tiny $\theta$};

            \draw[blue, line width = 2] (0,0) -- (0,1);
            \draw[blue, line width = 2] (1,0) -- (1,1);

            \draw (0.5,-1) node {$bd$};
        \end {scope}

        \begin {scope}[shift={(6,0)}]
            \draw (0,0) -- (1,0) -- (1,1) -- (0,1) -- cycle;
            \draw (0.5,0) node[below] {$c$};
            \draw (0,0.5) node[left]  {$d$};
            \draw (0.5,1) node[above] {$a$};
            \draw (1,0.5) node[right] {$b$};

            \draw (0.2,0.2) node {\color{red} \tiny $\sigma$};
            \draw (0.8,0.8) node {\color{red} \tiny $\theta$};

            \draw[orange, line width = 2, decorate] (0,0) -- (0,1);
            \draw[orange, line width = 2, decorate] (0,0) -- (1,0);
            \draw[orange, line width = 2, decorate] (1,0) -- (1,1);
            \draw[orange, line width = 2, decorate] (0,1) -- (1,1);

            \draw (0.5,-1) node {$\sqrt{abcd} \; \sigma \theta$};
        \end {scope}
    \end {tikzpicture}
    \caption {The three double dimer covers on a single-tile snake graph}
    \label{fig:thm_main_base_case}
    \end {figure}
    
    The base case is a quadrilateral, in which case $W(G)$ is the empty word. Assume the quadrilateral is labelled as
    in \Cref{fig:super_ptolemy}. Then the snake graph $G$ (corresponding to the diagonal $f$) is just a single tile. It is easy to see that
    there are only three double dimer covers  
    on the square graph. These are pictured in \Cref{fig:thm_main_base_case}, along with their weights.
    There is only one diagonal in this triangulation (the edge $e$), and so $\mathrm{cross}(f) = e$. We see then that the weights of these
    three double dimer covers  
    (after being divided by $e$) are precisely the three terms in the super Ptolemy relation (\Cref{eqn:super_ptolemy_lambda}). 
    This confirms part $(a)$ in the base case.

    Next, we consider part $(b)$ for the base case. We are interested in what is called $\theta'$ in the super Ptolemy relation (\Cref{eqn:super_ptolemy_mu_left}).
    Recall that $D_r(G) = D_R(G) \cup D_{tr}(G)$. In the case of a single tile, $D_r(G)$ consists of the second and third double dimer covers 
    in \Cref{fig:thm_main_base_case}. In this case the $x^*$ involution corresponds to $\theta$, and we have
    \[ (bd)^* = bd \; \theta \quad \text{ and } \quad (\sqrt{abcd} \; \sigma \theta)^* = \sqrt{abcd} \; \sigma \]
    As noted above, $\mathrm{cross}(f) = e$, and so the right-hand side of part $(b)$ in the case of a single tile is
    \[ \frac{1}{\mathrm{cross}(f)} \; \sqrt{\frac{e}{b}} \sum_{M \in D_r(G)} \mathrm{wt}(M)^* = \frac{1}{\sqrt{be}} \left( bd \; \theta + \sqrt{abcd} \; \sigma \right) \]
    On the other hand, the left-hand side of part $(b)$ (using the notations of \Cref{fig:super_ptolemy}) is
    \begin {align*} 
        \sqrt{df} \; \theta' &= \frac{\sqrt{df}}{\sqrt{ef}} \left( \sqrt{bd} \; \theta + \sqrt{ac} \; \sigma \right) \\
                             &= \frac{1}{\sqrt{e}} \left( d \sqrt{b} \; \theta + \sqrt{acd} \; \sigma \right) \\
                             &= \frac{1}{\sqrt{be}} \left( bd \; \theta + \sqrt{abcd} \; \sigma \right) \\
    \end {align*}
    This confirms the base case for part $(b)$.
    
    \bigskip

    \paragraph{\textbf{Induction Assumptions}}

    Now we assume the formulas in parts $(a)$ and $(b)$ for triangulations with fewer than $n$ triangles, for some $n>2$.

    As mentioned above, we assume the triangulation to be pictured as in \Cref{fig:proof}, so that the top-most fan center $j$ is on the left.
    The argument for the other case (that the top-most fan center is on the right) is similar: we would need to use part $(b)$ of \Cref{lem:parity}
    rather than part $(a)$, and the roles of the edges labelled $c$ and $d$ in \Cref{fig:proof} would be swapped.

    We will also assume that the polygon has an odd number of triangles (equivalently, the snake graph has an even number of tiles). By \Cref{lem:parity}, this implies that the snake graph ends with ``R''.
    Because $G$ has an even number of tiles, this means the last tile of $G$ has orientation opposite of the polygon. These assumptions imply that the end of the snake graph looks as follows:
    \begin {center}
    \begin {tikzpicture}[scale=1.3, every node/.style={scale=0.9}]
        \draw (0,0) -- (2,0) -- (2,1) -- (0,1) -- cycle;
        \draw (1,0) -- (1,1);

        \draw (1,0.5) node[left]  {$h$};
        \draw (1.5,1) node[above] {$b$};
        \draw (2,0.5) node[right] {$a$};
        \draw (1.5,0) node[below] {$e_2$};
        \draw (0.5,1) node[above] {$e$};

        \draw (1.8,0.8) node {\tiny $\theta_n$};
        \draw (1.3,0.2) node {\tiny $\theta_{n-1}$};
        \draw (0.7,0.8) node {\tiny $\theta_{n-1}$};
    \end {tikzpicture}
    \end {center}
    On the other hand, if the triangulation has an even number of tiles (the snake graph has an odd number of tiles), then the snake graph ends with ``U'' by \Cref{lem:parity},
    and the end of the snake graph would instead look as follows:
    \begin {center}
    \begin {tikzpicture}[scale=1.3, every node/.style={scale=0.9}]
        \draw (0,0) -- (1,0) -- (1,2) -- (0,2) -- cycle;
        \draw (0,1) -- (1,1);

        \draw (0.5,2) node[above] {$a$};
        \draw (1,1.5) node[right] {$b$};
        \draw (0.3,1) node[below] {$h$};
        \draw (0,1.5) node[left]  {$e_2$};

        \draw (0.8,1.8) node {\tiny $\theta_n$};
        \draw (0.3,1.2) node {\tiny $\theta_{n-1}$};
        \draw (0.75,0.8) node {\tiny $\theta_{n-1}$};
    \end {tikzpicture}
    \end {center}
    We will assume the former case for the remainder of the proof. The argument for the latter case is similar: occurrences of $D_R(G)$ and $D_r(G)$ would
    need to be replaced by $D_T(G)$ and $D_t(G)$.

    \bigskip
    
    \paragraph{\textbf{Induction for part ${\bf (b)}$}}
    
    First we prove part $(b)$. Looking at \Cref{fig:proof},
    the super Ptolemy relation gives the following expression for $\varphi$:
    \[ \varphi = \frac{1}{\sqrt{ef}} \left(\sqrt{bd} \, \theta + \sqrt{ac} \, \sigma \right) \]
    Multiplying by $\sqrt{df}$ gives
    \[ \sqrt{df} \, \varphi = \sqrt{\frac{d}{e}} \left(\sqrt{bd} \, \theta + \sqrt{ac} \, \sigma \right) = d \, \sqrt{\frac{b}{e}} \, \theta + \sqrt{\frac{acd}{e}} \, \sigma \]
    Recall that {$D_t(G) = D_T(G) \cup D_{tr}(G)$}. We claim that the two terms on the right-hand side correspond to {$D_T(G)$} and $D_{tr}(G)$.
    Consider the first term, $d \, \sqrt{\frac{b}{e}} \, \theta$. The elements $b,e,\theta$ are all in the initial triangulation, but $d$ might not be.
    Since $d$ is a diagonal in a smaller triangulation, we may use induction and say that
    \[ d = \frac{1}{\mathrm{cross}(d)} \, \sum_{M \in D(G^{(-k)})} \mathrm{wt}(M), \]
    where $k$ is the number of diagonals in the top fan segment.

    Note that $G^{(-k)}$ is $G$ with the last maximal ``\emph{staircase}'' removed.
    Let $e_1,\dots,e_k$ be the last $k$ internal diagonals of the last fan segment (with $e_1 = e$). These appear on the snake graph in the
    positions indicated in \Cref{fig:lemma2_proof_NEW} (Left). Then $\mathrm{cross}(f) = e_1 \cdots e_k \cdot \mathrm{cross}(d)$.
    Substituting the expression for $d$ into the expression we had before, we get that the first term is
    \[ 
        d \, \sqrt{\frac{b}{e}} \, \theta = \frac{e_1 \cdots e_k}{\mathrm{cross}(f)} \, \sqrt{\frac{b}{e}} \, \sum_{M \in D(G^{(-k)})} \mathrm{wt}(M) \, \theta 
        = \frac{e_2 \cdots e_k}{\mathrm{cross}(f)} \, \sqrt{be} \sum_{M \in D(G^{(-k)})} \mathrm{wt}(M) \, \theta 
    \]
{Here we need to check that the multiplication puts $\theta$ at the correct place in the positive order. By the induction hypothesis, the $\mu$-invariants in the monomials $\wt(M)$ are already written in the positive order with respect to $T$. Depending on the orientation of edge $e$, the $\mu$-invariant $\theta$ can possibly be at either the beginning or end of the order (\Cref{def:pos_order}).  Because of the construction in \Cref{def:weight-M}, the monomial $\mathrm{wt}(M)$ contains an even number of odd variables for any choice of $M$.  Thus $\theta$ commutes with $\wt(M)$, and the resulting product can always be written in the correct order without changing its sign.}
   
Now we use \Cref{lem:lemma2} to conclude that
    {
    \[ \sum_{M \in D(G^{(-k)})} \mathrm{wt}(M) = \frac{1}{be_2 \cdots e_k} \sum_{M \in D_T(G)} \mathrm{wt}(M) \]
    }
    
    Making this substitution gives 
    {
    \[ d \sqrt{\frac{b}{e}} \, \theta = \frac{1}{\mathrm{cross}(f)} \, \sqrt{\frac{e}{b}} \sum_{M \in D_T(G)} \, \mathrm{wt}(M) \, \theta \]
    }
    
    Note that since elements of {$D_T(G)$} cannot have a cycle around the last tile, $\mathrm{wt}(M) \, \theta = \mathrm{wt}(M)^*$. 
    So we get that the first term is the contribution from {$D_T(G)$}, as claimed. 

    Next, we examine the second term, $\sqrt{\frac{acd}{e}} \, \sigma$. 
    Recall that we assume $c$ is the second-longest edge (crosses all but the last diagonal), as in \Cref{fig:proof}.
    Also let $h$ be the boundary side of the polygon adjacent to the endpoint of $c$, as in the figure.
    Note that $\sqrt{cd} \, \sigma$ is precisely the left-hand side of the expression in part $(b)$ for the triangle $\sigma$ (in the role of $\varphi$).
    Let $x \mapsto x^\dagger$ denote the toggle involution for $\theta_{n-1}$. Then by induction, since $\sigma$ is in a smaller polygon, 
    we will assume the formula from part $(b)$. 
    {
        Since there is one less triangle in this smaller polygon, the sum is over $D_r$ rather than $D_t$.
    }
    So we get
    \begin {align*} 
        \sqrt{\frac{acd}{e}} \, \sigma &= \sqrt{\frac{a}{e}} \, \frac{1}{\mathrm{cross}(c)} \, \sqrt{\frac{e_2}{h}} \sum_{M \in D_r(G^{(-1)})} \mathrm{wt}(M)^\dagger \\
                                        &= \frac{\sqrt{ae}}{\mathrm{cross}(f)} \, \sqrt{\frac{e_2}{h}} \sum_{M \in D_r(G^{(-1)})} \mathrm{wt}(M)^\dagger
    \end {align*}
    {where the products are already in the correct positive order by induction.}
    Finally, we use \Cref{lem:lemma4} to substitute the summation over $D_r(G^{(-1)})$ for a summation over $D_{tr}(G)$:
    \[ \sum_{M \in D_r(G^{(-1)})} \mathrm{wt}(M)^\dagger = \sum_{M \in D_{tr}(G)} \sqrt{\frac{h}{abe_2}} \mathrm{wt}(M)^* \]
    Making this substitution cancels the $\sqrt{\frac{ae_2}{h}}$ factors, and gives the result:
    \[ \sqrt{\frac{acd}{e}} \, \sigma = \frac{1}{\mathrm{cross}(f)} \, \sqrt{\frac{e}{b}} \sum_{M \in D_{tr}(G)} \mathrm{wt}(M)^* \]

    \paragraph{\textbf{Induction for part ${\bf (a)}$}} 
       
    Consider the Ptolemy relation \Cref{eqn:super_ptolemy_lambda} on the quadrilateral $(i,j,k,l)$:
    \begin{equation} \label{eq:proof_flip}
        f = {ac \over e} + {bd \over e} + {\sqrt{abcd} \over e} \sigma \theta
    \end{equation}
   
    First we examine the first term in \Cref{eq:proof_flip}: $ac/e$. 
    Since $c$ is the longest arc in the smaller polygon with one less triangle, we have by induction that
    \begin{equation}\label{eq:c}
        c = \frac{1}{\crs(c)} \sum_{M\in D(G^{(-1)})} \wt(M)
    \end{equation}
    where $G^{(-1)}$ is the snake graph for $c$, which is obtained by removing the last tile from $G$. 
    Now multiply \Cref{eq:c} by $a/e$, noting that $\crs(f)=e\cdot\crs(c)$, to get
    \[{ac\over e}={1\over \crs(f)}\sum_{M\in D(G^{(-1)})}\wt(M)a\]
  	
    By \Cref{lem:lemma1}, double dimer covers 
     in $G^{(-1)}$ are exactly those in $D$ which have two dimers on edge $a$ of the last tile (see 
    \Cref{fig:lemma2_proof_NEW}). 
    Hence we have:
    \begin{equation}
    \label{eq:ac_e}{ac\over e}=\frac{1}{\crs(f)}\sum_{M\in D_R(G)}\wt(M)
    \end{equation}
  	
    Next we examine the term $bd/e$ in \Cref{eq:proof_flip}.
  
    Recall that by induction, $d$ is given by
    \begin{equation}\label{eq:d}
        d={1\over \crs(d)}\sum_{M\in D(G^{(-k)})}\wt(M)
    \end{equation}
    Multiply \Cref{eq:d} by $b/e$ to get
    \[{bd\over e}={1\over \crs(d)}\cdot{b\over e} \sum_{M\in D(G^{(-k)})}\wt(M)\]
    Since $\crs(f)=\crs(d)ee_2e_3\cdots e_k$, we have
    \[{bd\over e}={1\over \crs(f)}\sum_{M\in D(G^{(-k)})}\wt(M)be_2e_3\cdots e_k\]
    Now by \Cref{lem:lemma2}, double dimer covers  
    in $D(G^{(-k)})$ are in bijection with those in $D_T(G)$ up to multiplying by the weight 
    of edges in the first $k$ tiles $ee_2e_3\cdots e_k$ (see \Cref{fig:lemma2_proof_NEW}). Hence we have:
    \begin{equation} \label{eq:bd_e}
        {bd \over e} = {1\over \crs(f)} \sum_{M\in D_T(G)}\wt(M)
    \end{equation}
    
    {Note that in \Cref{eq:ac_e,eq:bd_e}, the products of $\mu$-invariants are taken under the positive order by induction.}

    Lastly we examine the term ${\sqrt{abcd}\over e}\sigma\theta$ in \Cref{eq:proof_flip}. By induction hypothesis of part (b) we have
    \begin{equation} \label{eq:mu_induction}
        \sqrt{cd}\sigma={1\over\crs(c)}\sqrt{e_2\over h}\sum_{M\in D_r (G^{(-1)})}\wt(M)^\dagger 
    \end{equation}
    {Here $G^{(-1)}$ has an even number of tiles, therefore the sum is over $D_r(G^{(-1))}$ rather than $D_t(G^{(-1)})$.}
    Right multiply \Cref{eq:mu_induction} by ${\sqrt{ab}\over e}\theta$ we get
    \begin{align*}
        {\sqrt{abcd}\over e}\sigma \theta &={1\over\crs(c)}\cdot{\sqrt{ab}\over e}\cdot\sqrt{e_2\over h}\sum_{M\in D_r (G^{(-1)})}\wt(M)^\dagger\theta \\
                                          &={1\over\crs(f)}\sqrt{abe_2\over h}\sum_{M\in D_r (G^{(-1)})}\wt(M)^\dagger\theta
    \end{align*}
    
    Since we assumed that the direction of edge $e$ is from left to right as depicted in \Cref{fig:proof}, $\theta$ will appear at the end of the positive order of $T$, hence right-multiplication of $\theta$ will result in the correct order. In the other situation where the edge $e$ goes from right to left, the product in \Cref{eq:proof_flip} would be $\theta\sigma$ instead, giving the positive order in that case.

    Note that for $M\in D_r (G^{(-1)})$, we have $\wt(M)^\dagger\theta=(\wt(M)^\dagger)^*$. Hence applying the bijection in \Cref{lem:lemma4}, we get
    \begin{equation} \label{eq:mu_term}
        {\sqrt{abcd}\over e}\sigma\theta={1\over\crs(f)}\sum_{M\in D_{tr}( G)}\wt(M).
    \end{equation}

    Finally, combining \Cref{eq:ac_e,eq:bd_e,eq:mu_term} gives us
    \[ f = \frac{1}{\mathrm{cross}(f)} \sum_{M \in D(G)} \mathrm{wt}(M),\]
    which completes the proof.
\end {proof}

\bigskip

\section{Relationship Between Different Combinatorial Models for Super $\lambda$-Lengths}

In this section, we relate the combinatorial interpretation of super $\lambda$-lengths given by double dimer covers of snake graphs, 
as in Theorem \ref{thm:main} (a), to our previous combinatorial interpretation given in \cite{moz21} by super $T$-paths.
As a by-product of our efforts towards this comparison, we introduce an additional family of combinatorial objects, which we call \emph{twisted} super $T$-paths, 
that yield the same elements of the super algebra $A = \Bbb{R}[x_{ij}^{\pm 1/2} ~|~ \theta_k]$.

\subsection{Twisted Super $T$-Paths}

Recall from Section 4 of \cite{moz21} that we distinguished certain vertices of a triangulation $T$ to be fan centers and consequently defined an auxiliary graph from this data.
To match our notation to that of \cite{moz21}, we fix a choice of arc $(a,b)$, noting this is the arc whose super $\lambda$-length we wish to compute, i.e. $\lambda_{ab}$.  
Then recall by restricting to a sub-triangulation $T(a,b) \subset T$ if necessary, we assume $(a,b)$ is the longest arc in $T(a,b)$.
	
For a triangulation $T$ and a pair of vertices $a$ and $b$, the auxiliary graph is the graph of the triangulation $T$ with some additional vertices and edges. 
\begin{enumerate}[]
    \item For each face of the triangulation $T$, we place an internal vertex, which lies on the arc $(a,b)$. 
          We denote the internal vertices using $\theta_1,\cdots,\theta_{n+1}$, such that $\theta_i$ is closer to $a$ than $\theta_j$ if and only if $i<j$. 
    
    \item For each face of $T$, we add an edge $\sigma_i := (\theta_i, c_j)$ connecting the internal vertex $\theta_i$ to the center of the fan segment which contains $\theta_i$. 
          We denote by $\sigma$ the set of all such edges.
    
    \item For each $\theta_i$ and $\theta_j$ with $i<j$, we add an edge connecting $\theta_i$ and $\theta_j$. 
          We denote the collection of these edges as $\tau = \{\tau_{ij}:i<j\}$. For simplicity the $\tau$-edges are drawn to be overlapping.
\end{enumerate}

See \Cref{fig:auxiliary_graph} for an example.  Note that we continue to follow the convention of \Cref{def:Ggamma} and assume that the 
longest edge $(a,b)$ traverses the triangulation $T$ from bottom-to-top.

\begin{figure}[h]
	\centering
	\begin{tikzpicture}[]
	\tikzstyle{every path}=[draw] 
		\path
    node[
      regular polygon,
      regular polygon sides=6,
      draw,
      inner sep=1.6cm,
    ] (hexagon) {}
    %
    % Annotations
    (hexagon.corner 1) node[above] {$\ \ b$}
    (hexagon.corner 2) node[above] {$y\ \ $}
    (hexagon.corner 3) node[left ] {$c_1$}
    (hexagon.corner 4) node[below] {$a\ \ $}
    (hexagon.corner 5) node[below] {$\ \ x$}
    (hexagon.corner 6) node[right] {$c_2$}

  ;
  \coordinate (m1) at (1,1.62);
  \coordinate (m2) at (0.1,0.8);
  \coordinate (m3) at (-0.1,-0.8);
  \coordinate (m4) at (-1,-1.62);
  
  \draw [name path = m ,opacity=1] (m1) to [out=-180+50,in=60] (m2) to [out=-180+60,in=60] (m3) to [out=-180+60,in=50] (m4);%
  
  \draw [name path = L1] (hexagon.corner 6) to (hexagon.corner 2);
  \draw [name path = L2] (hexagon.corner 6) to (hexagon.corner 3);
  \draw [name path = L3] (hexagon.corner 5) to (hexagon.corner 3);

  \draw (m1) node [fill,circle,scale=0.3] {};
  \draw (m2) node [fill,circle,scale=0.3] {};
  \draw (m3) node [fill,circle,scale=0.3] {};
  \draw (m4) node [fill,circle,scale=0.3] {};
  
  \node[left] at (m1) {$\theta_4$};
  \node[left] at (m2) {$\theta_3$};
  \node[right] at (m3) {$\theta_2$}; 
  \node[left] at (m4) {$\theta_1$};
  
  \node[right] at (1.2,1.4) {$\sigma_4$};
  \node[right] at (0.4,0.4) {$\sigma_3$};
  \node[right] at (-1,-0.4) {$\sigma_2$};
  \node[right] at (-1.7,-1) {$\sigma_1$};

  \draw (hexagon.corner 1) node [fill,circle,scale=0.35] {};
  \draw (hexagon.corner 2) node [fill,circle,scale=0.35] {};
  \draw (hexagon.corner 3) node [fill,circle,scale=0.35] {};
  \draw (hexagon.corner 4) node [fill,circle,scale=0.35] {};
  \draw (hexagon.corner 5) node [fill,circle,scale=0.35] {};
  \draw (hexagon.corner 6) node [fill,circle,scale=0.35] {};
  
  \draw [] (m1)--(hexagon.corner 6);
  \draw [] (m2)-- (hexagon.corner 6);
  \draw [] (m3)--(hexagon.corner 3);
  \draw [] (m4)-- (hexagon.corner 3);

	\end{tikzpicture}
		\caption{The auxiliary graph as defined in \cite{moz21}.}
		\label{fig:auxiliary_graph}
	\end{figure}
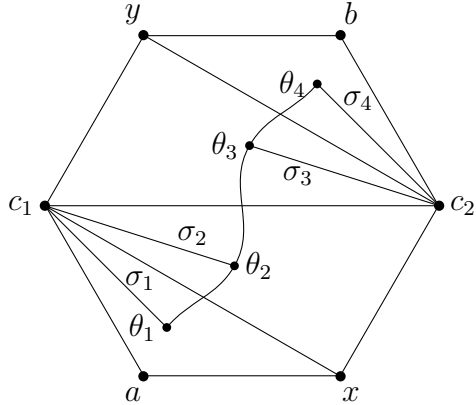

We now define a twisted auxiliary graph $\Gamma_T^{a,b}$ that we use to define twisted super $T$-paths.  
For our alternative twisted definition we introduce new $\sigma$-edges by using the complements of the fan centers.  
More precisely, given a triangulation $T$ admitting longest edge $(a,b)$ as above, for every triangle we associate two $\sigma$-edges 
(rather than one) and note that one of these two $\sigma$-edges is closer to the starting vertex $a$ while the other is closer to the ending vertex $b$.  
We color the former in  {\bf\textcolor{blue}{thick blue}} and call it a $\sigma^A$-edge and color the latter in {\textcolor{cyan}{cyan} 
and call it a $\sigma^B$-edge.  For each $\theta_i$ and $\theta_j$ with $i<j$, $\Gamma_{T}^{a,b}$ contains the edge $\tau_{ij}:i<j$ just as before.
See \Cref{fig:auxiliary_graph2} for an example.

Just as super $T$-paths follow edges of the auxiliary graph, as defined in \cite{moz21}, we will define twisted super $T$-paths to 
follow edges of this newly defined twisted auxiliary graph $\Gamma_T^{a,b}$.

\begin{figure}[h]
	\centering
	\begin{tikzpicture}[]
	\tikzstyle{every path}=[draw] 
		\path
    node[
      regular polygon,
      regular polygon sides=6,
      draw,
      inner sep=1.6cm,
    ] (hexagon) {}
    %
    % Annotations
    (hexagon.corner 1) node[above] {$\ \ b$}
    (hexagon.corner 2) node[above] {$y\ \ $}
    (hexagon.corner 3) node[left ] {$c_1$}
    (hexagon.corner 4) node[below] {$a\ \ $}
    (hexagon.corner 5) node[below] {$\ \ x$}
    (hexagon.corner 6) node[right] {$c_2$}

  ;
  \coordinate (m1) at (1,1.62);
  \coordinate (m2) at (0.1,0.8);
  \coordinate (m3) at (-0.1,-0.8);
  \coordinate (m4) at (-1,-1.62);
  
  \draw [name path = m ,opacity=1] (m1) to [out=-180+50,in=60] (m2) to [out=-180+60,in=60] (m3) to [out=-180+60,in=50] (m4);%
  
  \draw [name path = L1] (hexagon.corner 6) to (hexagon.corner 2);
  \draw [name path = L2] (hexagon.corner 6) to (hexagon.corner 3);
  \draw [name path = L3] (hexagon.corner 5) to (hexagon.corner 3);

  \draw (m1) node [fill,circle,scale=0.3] {};
  \draw (m2) node [fill,circle,scale=0.3] {};
  \draw (m3) node [fill,circle,scale=0.3] {};
  \draw (m4) node [fill,circle,scale=0.3] {};
  
  \node[right] at (1.2,1.4) {$\theta_4$};
  \node[right] at (0.4,0.4)  {$\theta_3$};
  \node[right] at (-1,-0.4)  {$\theta_2$}; 
  \node[left] at (-1,-1.22) {$\theta_1$};
  
  \node[left] at (1.1,1.4) {$\sigma^A_4$};
  \node[left] at (1.3,2.0) {$\sigma^B_4$};
  \node[right] at (-1.0, 0.4) {$\sigma^A_3$};
   \node[right] at (-1.2, 1.2) {$\sigma^B_3$}; 
  \node[right] at (0.3, -1.2) {$\sigma^A_2$};
  \node[right] at (0.2,-0.3) {$\sigma^B_2$};  
  \node[right] at (-1.1,-2.0) {$\sigma^A_1$};
  \node[right] at (-0.8,-1.5) {$\sigma^B_1$};

  \draw (hexagon.corner 1) node [fill,circle,scale=0.35] {};
  \draw (hexagon.corner 2) node [fill,circle,scale=0.35] {};
  \draw (hexagon.corner 3) node [fill,circle,scale=0.35] {};
  \draw (hexagon.corner 4) node [fill,circle,scale=0.35] {};
  \draw (hexagon.corner 5) node [fill,circle,scale=0.35] {};
  \draw (hexagon.corner 6) node [fill,circle,scale=0.35] {};
  
  \draw [cyan,thick] (m1)--(hexagon.corner 1);
  \draw [blue,ultra thick] (m1)--(hexagon.corner 2);  

  \draw [cyan,thick] (m2)--(hexagon.corner 2);
  \draw [blue,ultra thick] (m2)--(hexagon.corner 3);  
  
  \draw [cyan,thick] (m3)--(hexagon.corner 6);
  \draw [blue,ultra thick] (m3)--(hexagon.corner 5);  
  
  \draw [cyan,thick] (m4)--(hexagon.corner 5);
  \draw [blue,ultra thick] (m4)--(hexagon.corner 4);  
  
  %\draw [] (m1)--(hexagon.corner 6);
  %\draw [] (m2)-- (hexagon.corner 6);
  %\draw [] (m3)--(hexagon.corner 3);
  %\draw [] (m4)-- (hexagon.corner 3);

	\end{tikzpicture}
		\caption{The twisted auxiliary graph $\Gamma_T^{a,b}$.}
		\label{fig:auxiliary_graph2}
	\end{figure}
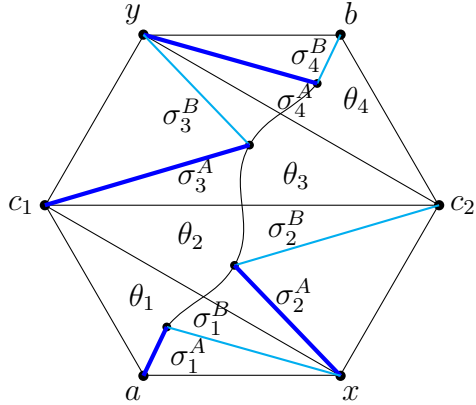

We define twisted super $T$-paths from $a$ to $b$  axiomatically as follows.

\begin{definition} \label{def:super-T} 
    A \emph{twisted super $T$-path} $t$ from $a$ to $b$ is a sequence
    \[t=(a_0,a_1,\cdots,a_{\ell(t)} | t_1,t_2,\cdots,t_{\ell(t)})\]
    such that
    \begin{enumerate}
        \item[(T1)] $a=a_0,a_1,\cdots,a_{\ell(t)}=b$ are vertices on $\Gamma_T^{a,b}$.
        \item[(T2)] For each $1\leq i\leq \ell(t)$, $t_i$ is an edge in $\Gamma_T^{a,b}$ connecting $a_{i-1}$ and $a_i$.
        \item[(T3)] $t_i\neq t_j$ if $i\neq j$.
        \item[(T4)] $\ell(t)$ is odd.
        \item[(T5')] $t_i$ crosses $(a,b)$ if $i$ is even. The $\tau$-edges are considered to cross $(a,b)$, and 
                     any step along a $\tau$-edge must end further from endpoint $a$ and closer to endpoint $b$.
        \item[(T6')] $t_i\in\sigma$ only if $i$ is odd, $t_i\in\tau$ only if $i$ is even.
        \item[(T7)] If $i<j$ and both $t_i$ and $t_j$ cross the arc $(a,b)$, then the intersection $t_i\cap (a,b)$ 
                    is closer to the vertex $a$ than the intersection $t_j\cap (a,b)$.
    \end{enumerate}

     We define $\mathcal T_{a,b}$ to be the set of twisted super $T$-paths from $a$ to $b$. 
\end{definition}

\begin{remark}
This is a slight variant of Definition 4.2 of \cite{moz21} with the only difference being that we replaced the previous axioms 
\begin{enumerate}
    \item[(T5)] $t_i$ crosses $(a,b)$ if $i$ is even. The $\sigma$-edges are considered to cross $(a,b)$.            
    \item[(T6)] $t_i\in\sigma$ only if $i$ is even, $t_i\in\tau$ only if $i$ is odd
\end{enumerate}
with the axioms (T5') and (T6'), thereby switching the parity of $\sigma$-steps and $\tau$-steps.
\end{remark}

We also define new weights for such twisted super $T$-paths.  (Note that the weights of $\sigma$-edges differ from 
Definition 4.8 of \cite{moz21} since $\sigma$-edges are now assumed to be odd steps rather than even steps.)

\begin{definition} \label{def:twisted_weight}
    Let $t \in \mathcal{T}_{ab}$ be a twisted super $T$-path which uses edges $t_1,t_2,\dots$ in the twisted auxiliary graph $\Gamma^T_{a,b}$.
    We will assign to each edge $t_i$ a twisted weight, which will be an element in the super algebra
    $\mathbb R[x_1^{\pm\frac{1}{2}},\cdots,x_{2n+3}^{\pm\frac{1}{2}} ~|~ \theta_1,\cdots,\theta_{n+1}]$ (where $\theta_i$'s are the odd generators) as follows.  
    For the parity of edges $t_i \in \sigma$ or $\tau$, we recall axiom (T6') of \Cref{def:super-T}.

    \[
        \twt(t_i) := \begin{cases}  
            x_j      &\text{ if } t_i \in T \text{, with }\lambda\text{-length }x_j \text{, and }i \text{ is odd} \\
            x_j^{-1} &\text{ if } t_i \in T \text{, with }\lambda\text{-length }x_j \text{, and }i \text{ is even} \\
            \sqrt{\frac{x_kx_l}{x_j}} \, \theta_s &\text{ if } t_i \in\sigma \text{ and the face containing }t_i \text{ is as pictured below} ~ (i \text{ must be odd}) \\
            1&\text{ if }t_i\in\tau~ (i \text{ must be even})
    \end{cases}  \]
    
    \begin{center}
        \begin{tikzpicture}[scale=1.3]
                \coordinate (1) at (1,0);
                \coordinate (2) at (-1,0);
                \coordinate (3) at (0,1.618);
                \coordinate (4) at (0, 0.618);
                \draw (1)--(2)--(3)--cycle;
                \draw [thick](3)--(4);
                \node at (-0.2, 0.518) {$\theta_s$};
                \node at (0.1,1.118) {$t_i$};
                \node at (0.65,0.9) {$x_l$};
                \node at (-0.65,0.9) {$x_k$};
                \node at (0,-0.15) {$x_j$};
        \end{tikzpicture}
    \end{center}
    Here, $\theta_s$ is the $\mu$-invariant associated to the face containing $t_i$.
	Finally, we define the (twisted) weight of a twisted super $T$-path to be the product of the (twisted) weights of its edges
    \[\twt(t)= \prod_{t_i\in t}\twt(t_i) \]
    where the product of $\mu$-invariants is taken under the positive order.
\end{definition}

We use this to arrive at an analogue of Theorem 4.9 of \cite{moz21}.

\begin{theorem}\label{thm:main_twisted} Under the default orientation,
    the $\lambda$-length of $(a,b)$ is given by
    $$\lambda_{a,b} = \sum_{t\in\mathcal T_{a,b}} \twt(t)$$
    where we use twisted super $T$-paths on the twisted auxiliary graph, and with twisted weights of each of the steps.
\end{theorem}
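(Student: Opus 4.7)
The plan is to establish this theorem by constructing a weight-preserving bijection
$\Phi \colon \mathcal{T}_{a,b} \to D(G)$
between twisted super $T$-paths and double dimer covers on the snake graph $G = G_{(a,b)}$, such that
$\twt(t) = \frac{1}{\crs(f)} \wt(\Phi(t))$
for every $t \in \mathcal{T}_{a,b}$, where $f = (a,b)$. Combined with \Cref{thm:main}(a), this immediately yields
\[
\sum_{t \in \mathcal{T}_{a,b}} \twt(t) \;=\; \frac{1}{\crs(f)} \sum_{M \in D(G)} \wt(M) \;=\; \lambda_{a,b},
\]
which is the claimed identity.

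To construct $\Phi$, I would first handle the non-super case: twisted super $T$-paths that use no $\sigma$-edges. These alternate between odd steps in $T$ and even steps that are either edges of $T$ crossing $(a,b)$ or $\tau$-edges, so after a reindexing they correspond bijectively to classical $T$-paths, and the Musiker--Schiffler bijection of \cite{ms10} identifies them with (non-doubled) dimer covers of $G$. A direct computation then shows that multiplying by $\crs(f)$ converts the product of $\twt$ factors into $\wt(M)$.

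For the genuinely super case, observe that axioms (T4) and (T6') force the number of $\sigma$-edges in any $t \in \mathcal{T}_{a,b}$ to be even. Each pair of $\sigma$-edges records a \emph{detour} in the $T$-path that bypasses a block of consecutive diagonals by entering two triangles $\theta_{s_1}, \theta_{s_2}$, contributing $\theta_{s_1}\theta_{s_2}$ together with the rational factors $\sqrt{x_k x_l / x_j}$ of \Cref{def:twisted_weight}. On the dimer side, each double dimer cover carries a collection of cycles, each contributing $\theta_i \theta_j$ from its bottom-left and top-right corner triangles. I would match $\sigma$-pairs to cycles block-by-block: a cycle enclosing tiles $T_i,\dots,T_j$ of $G$ corresponds to a detour that skips $x_i,\dots,x_j$ and enters the corner triangles via the $\sigma^A/\sigma^B$-edges of the twisted auxiliary graph.

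The main obstacle will be bookkeeping around \Cref{def:pos_order} to ensure that the product of $\theta$-variables coming from $\sigma$-pairs matches the product coming from dimer cycles in the positive order, with no extraneous sign. Since the positive order is built inductively across fan segments, the cleanest route is to prove the bijection by induction on the number of tiles, peeling off the last tile or the last ``staircase'' using the recurrences of Lemmas \ref{lem:lemma1}--\ref{lem:lemma4} exactly as in the proof of \Cref{thm:main}. At the last tile, parity constraints from axioms (T4), (T6'), and (T7) leave only a small number of local possibilities, which can be matched case-by-case against the sets $D_R(G)$, $D_T(G)$, $D_{tr}(G)$ on the dimer side; the $\dagger$ and $*$ involutions of \Cref{def:star_involutions} track exactly how $\theta_{n-1}$ and $\theta_n$ are inserted into the positive order. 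A secondary route, if this direct bijection proves unwieldy, is to construct a weight-preserving bijection between $\mathcal{T}_{a,b}$ and the original super $T$-paths of \cite{moz21} that realigns the parity of $\sigma$- and $\tau$-steps, and then invoke \cite[Theorem 4.9]{moz21} directly.
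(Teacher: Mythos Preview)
Your secondary route is precisely the paper's proof: it establishes a weight-preserving bijection between the original super $T$-paths of \cite{moz21} and the twisted super $T$-paths (this is \Cref{lem:twisted_bijection}), then invokes \cite[Theorem~4.9]{moz21} directly. That bijection is a purely local four-case rewriting at each $\sigma$-step (see \Cref{fig:old_new_bijection} and \Cref{table:weights}), so the whole proof fits in a page.

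Your primary route---bijecting twisted super $T$-paths directly with double dimer covers and then citing \Cref{thm:main}(a)---is also correct, and in fact the paper carries out exactly that bijection in the subsection immediately following this theorem. But the paper does so for a different purpose (to unify the various combinatorial models), not as the proof of \Cref{thm:main_twisted}. The trade-off is clear: the twisted-to-original bijection is almost trivial, while the twisted-to-dimer bijection requires decomposing a path into ordinary segments and twisted super steps, matching the latter to cycles on sub-snake-graphs, and verifying the weight identity via the observation labelled $(*)$ about boundary-edge products on $G_{\gamma_{AB}}$. Your inductive plan using Lemmas~\ref{lem:lemma1}--\ref{lem:lemma4} would get there, but it duplicates the effort already invested in proving \Cref{thm:main}. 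So: promote your secondary route to primary.
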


Before proving this theorem, we construct a weight-preserving bijection between super $T$-paths, as defined in \cite{moz21}, and twisted super $T$-paths, 
as introduced in \Cref{def:super-T}.  For this bijection, it will also be useful to define \emph{twisted super steps} 
in analogy to super steps in \cite{moz21}.

\begin{definition} \label{def:twisted_super_steps}
    A triple of steps along a twisted super $T$-path which consist of a $\sigma$-edge, followed by a $\tau$-edge, 
    and then a second $\sigma$-edge will be called a \emph{twisted super step}.
\end{definition}

\begin{lemma} \label{lem:twisted_bijection}
    Given a triangulation $T$ of a polygon such that $(a,b)$ is its unique longest arc, there is a weight-preserving bijection $\psi$ 
    that maps every super $T$-path $t = (t_1,t_2,\dots, t_{\ell(t)})$ to a \emph{twisted} super $T$-path
    $\psi(t) = (t_1',t_2',\dots, t_{\ell(t)'}')$ through the auxiliary graph $\Gamma_T^{a,b}$.
    Furthermore, for each super $T$-path $t$, the weight of $t$, i.e. $wt(t) = \prod_{i=1}^{\ell(t)} wt(t_i)$, and the twisted 
    weight of $\psi(t) = t'$, i.e.  $twt(t') = \prod_{i=1}^{\ell(t')} twt(t_i')$, coincide.
\end{lemma}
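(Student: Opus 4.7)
The plan is to construct $\psi$ by a local transformation at each super step of a super $T$-path. Recall from \cite{moz21} that a super step is a triple $(\sigma_i, \tau_{ij}, \sigma_j)$ of consecutive edges. In the original framework such a triple occupies positions $(2k, 2k+1, 2k+2)$ with $\sigma$-edges at even positions, while axiom (T6') of \Cref{def:super-T} requires twisted super steps to have $\sigma^{A/B}$-edges at odd positions, so the twisted triple must sit at positions $(2k-1, 2k, 2k+1)$. The bijection $\psi$ accomplishes this shift by absorbing the $T$-edges flanking each super step into the new twisted $\sigma$-edges.

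More precisely, each super step in $t$ is preceded by a $T$-edge from some polygon vertex $v_0$ to the fan center $c$ of the triangle containing $\theta_i$, and followed by a $T$-edge from the fan center $c'$ (possibly $c' = c$) to a polygon vertex $v_1$. I would define $\psi$ so that the five-edge block $(T_{\text{before}}, \sigma_i, \tau_{ij}, \sigma_j, T_{\text{after}})$ is replaced by the three-edge block $(\sigma^?_i, \tau_{ij}, \sigma^?_j)$, where $\sigma^?_i$ runs from $v_0$ to $\theta_i$ and $\sigma^?_j$ runs from $\theta_j$ to $v_1$; the superscripts $A$ or $B$ are chosen according to which side of the triangle ($a$-side or $b$-side) contains $v_0$, respectively $v_1$. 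Ordinary $T$-edges lying outside of super-step neighborhoods are left untouched. At the start or end of the path, we set $v_0 = a$ or $v_1 = b$ as needed.

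The verification proceeds in three parts. First, to see that $\psi(t)$ is a valid twisted super $T$-path, I would check each axiom (T1)--(T7) of \Cref{def:super-T}; the parity condition (T6') and the crossing-order condition (T7) are the key points, and both follow from the fact that the underlying geometric path in the triangulation is unchanged. Second, weight preservation reduces to the local identity
\[ \wt(T_{\text{before}}) \cdot \wt(\sigma_i) = \twt(\sigma^?_i) \]
and its analogue for $\sigma_j$, which is verified directly by comparing the weight formulas of \cite[Definition 4.8]{moz21} with \Cref{def:twisted_weight}, observing that the $\lambda$-length of $T_{\text{before}}$ combines with the square-root factors in $\wt(\sigma_i)$ to yield exactly $\sqrt{x_k x_l / x_j}\,\theta_i$ after cancellations inside the shared triangle. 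Third, the inverse of $\psi$ reinserts the flanking $T$-edges by reading off the unique edges of the triangulation from each $v_0, v_1$ to the corresponding fan center.

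The main obstacle will be verifying that this local rule is well-defined and globally consistent; in particular that the flanking vertex $v_0$ always lies in the triangle of $\theta_i$, so that a $\sigma^{A/B}$-edge from $v_0$ to $\theta_i$ indeed exists as an edge of $\Gamma_T^{a,b}$. This requires a careful case analysis of how super $T$-paths traverse the fan decomposition of $T$, and of adjacent super steps whose flanking $T$-edges might overlap. A subsidiary but necessary check is that the positive ordering of $\mu$-invariants is preserved, which holds because each super step contributes the same product $\theta_i \theta_j$ in the same relative position within the reorganized path.
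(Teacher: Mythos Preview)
Your construction has a genuine gap, and it sits precisely where you flagged your ``main obstacle.'' You propose replacing every five-edge block $(T_{\text{before}}, \sigma_i, \tau_{ij}, \sigma_j, T_{\text{after}})$ by a three-edge block, which requires the flanking vertex $v_0$ (the start of $T_{\text{before}}$) to be one of the two non-fan-center vertices of the triangle containing $\theta_i$. But this is not always true: in a fan segment with center $c$, the odd $T$-edge preceding $\sigma_i$ can be any edge of $T$ ending at $c$, and its other endpoint $v_0$ need not lie in the particular triangle of $\theta_i$. For a concrete instance, take a single fan with triangles $\theta_1,\theta_2,\theta_3$ and consider the super $T$-path $a \to c_1 \to \theta_2 \to \theta_3 \to c_1 \to b$; here $v_0=a$ is not a vertex of the triangle containing $\theta_2$, so no edge $\sigma^?_2$ from $a$ to $\theta_2$ exists in $\Gamma_T^{a,b}$, and your replacement rule is undefined.

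The paper's proof resolves this by a four-case local analysis rather than a single $5\!\to\!3$ rule. In two of the cases (the analogue of your construction) the flanking $T$-edge is absorbed and the twisted path is locally shorter; but in the other two cases the transformation goes the opposite way and \emph{inserts} an additional ordinary $T$-edge, making the twisted path locally longer. Concretely, when $v_0$ is outside the triangle of $\theta_i$, one replaces the single edge $\sigma_i$ by a two-edge detour: first a new even $T$-edge along a side of the triangle (contributing a reciprocal $\lambda$-length factor), then the odd $\sigma^A_i$ or $\sigma^B_i$-edge. The weight check then has four cases rather than one, but each is a one-line identity of the same flavor as your $\wt(T_{\text{before}})\cdot\wt(\sigma_i)=\twt(\sigma^?_i)$. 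So your overall strategy of a local, block-by-block transformation is correct, but the rule must be bidirectional: sometimes contracting two edges to one, sometimes expanding one edge to two.
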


\begin{proof}
    We construct a bijection between super $T$-paths and twisted super $T$-paths via the following:

    Recall that a non-ordinary super $T$-path, as defined in \cite{moz21}, involves super steps, which consists of the three-step combination of 
    $\sigma_i \tau_{ij} \sigma_j$ where $\sigma_i$ is the incoming edge to the face center corresponding to $\theta_i$, $\tau_{ij}$ is the teleportation 
    between the face centers for $\theta_i$ and $\theta_j$, and
    $\sigma_j$ is the outgoing edge to the face center corresponding to $\theta_j$.  Further, we assume that $\sigma_i$ and $\sigma_j$ 
    are even steps while $\tau_{ij}$ is an odd step.

    In our bijection, we either replace two out of three steps of a super step with either a single step or a three step combination.  
    The consequence of this map switches the parity of $\sigma$-steps as needed while adding (or deleting) an ordinary step.  
    The parity of the $\tau_{ij}$ steps is similarly switched from odd to even.  These switches are also local in the sense that the 
    remainder of the super $T$-paths may be left alone.  Locally, there are four possible cases, and they are each illustrated 
    in \Cref{fig:old_new_bijection}.  Traversing the twisted auxiliary graph $\Gamma_T^{a,b}$ from bottom-to-top, in cases (i) and (ii), 
    we illustrate the two ways a super $T$-path (resp. super $T$-path) may enter the triangle with face center $\theta_i$ followed by a $\tau_{ij}$-step.  
    (Blue or cyan steps correspond to odd-indexed steps while red steps correspond to even-indexed steps.)  Cases (iii) and (iv) illustrate 
    the analogous options for exiting the triangle with face center $\theta_j$ proceeding after a $\tau_{ij}$-step.

    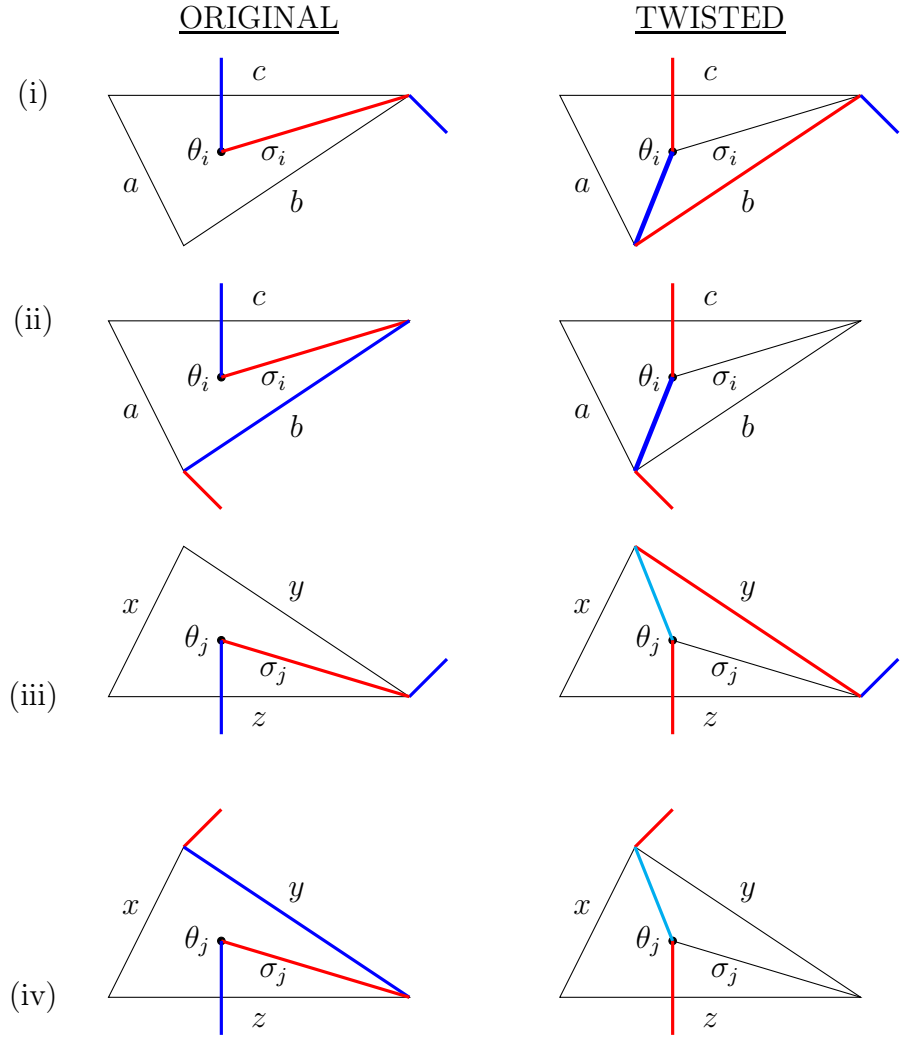
\begin {figure}[h]
    \centering
    \begin {tikzpicture}
    \draw (2,3) node {\underline{ORIGINAL}};
    \draw (8,3) node {\underline{TWISTED}};

    \begin {scope} [shift={(0,-6)}]
    	\draw (-1,-0) node {(iii)};
    	\draw (0,0) -- (4,0) -- (1,2) -- cycle;
    	\draw (4,0) -- (1.5,0.75);
    	\draw[fill=black] (1.5,0.75) circle (0.05);

    	\draw (0.3,1.2) node {$x$};
    	\draw (2.5,1.4) node {$y$};
    	\draw (2, -0.3) node {$z$};
    	\draw (2.2,0.3) node {$\sigma_j$};
    	\draw (1.5,0.75) node[left] {$\theta_j$};

    	\draw [blue, line width = 1.2] (4.5,0.5) -- (4,0);
    	\draw [red,  line width = 1.2] (4,0) -- (1.5,0.75);
    	\draw [blue, line width = 1.2] (1.5,0.75) -- (1.5,-0.5);
    \end {scope}

    \begin {scope} [shift={(6,-6)}]
        \draw (0,0) -- (4,0) -- (1,2) -- cycle;
        \draw (4,0) -- (1.5,0.75);
        \draw[fill=black] (1.5,0.75) circle (0.05);

        \draw (0.3,1.2) node {$x$};
        \draw (2.5,1.4) node {$y$};
        \draw (2, -0.3) node {$z$};
        \draw (2.2,0.3) node {$\sigma_j$};
        \draw (1.5,0.75) node[left] {$\theta_j$};

        \draw [blue, line width = 1.2] (4.5,0.5) -- (4,0);
        \draw [red,  line width = 1.2] (4,0) -- (1,2);
        \draw [cyan, line width = 1.2] (1,2) -- (1.5,0.75);
        \draw [red,  line width = 1.2] (1.5,0.75) -- (1.5,-0.5);
    \end {scope}

    \begin {scope} [shift={(0,-10)}]
    	\draw (-1,-0) node {(iv)};
        \draw (0,0) -- (4,0) -- (1,2) -- cycle;
        \draw (4,0) -- (1.5,0.75);
        \draw[fill=black] (1.5,0.75) circle (0.05);

        \draw (0.3,1.2) node {$x$};
        \draw (2.5,1.4) node {$y$};
        \draw (2, -0.3) node {$z$};
        \draw (2.2,0.3) node {$\sigma_j$};
        \draw (1.5,0.75) node[left] {$\theta_j$};

        \draw [red,  line width = 1.2] (1.5,2.5) -- (1,2);
        \draw [blue, line width = 1.2] (1,2) -- (4,0);
        \draw [red,  line width = 1.2] (4,0) -- (1.5,0.75);
        \draw [blue, line width = 1.2] (1.5,0.75) -- (1.5,-0.5);
    \end {scope}

    \begin {scope} [shift={(6,-10)}]
        \draw (0,0) -- (4,0) -- (1,2) -- cycle;
        \draw (4,0) -- (1.5,0.75);
        \draw[fill=black] (1.5,0.75) circle (0.05);

        \draw (0.3,1.2) node {$x$};
        \draw (2.5,1.4) node {$y$};
        \draw (2, -0.3) node {$z$};
        \draw (2.2,0.3) node {$\sigma_j$};
        \draw (1.5,0.75) node[left] {$\theta_j$};

        \draw [red,  line width = 1.2] (1.5,2.5) -- (1,2);
        \draw [cyan, line width = 1.2] (1,2) -- (1.5,0.75);
        \draw [red,  line width = 1.2] (1.5,0.75) -- (1.5,-0.5);
    \end {scope}

    \begin {scope} [shift={(0,2)}]
        \draw (-1,-0) node {(i)};
        \draw (0,0) -- (4,0) -- (1,-2) -- cycle;
        \draw (4,0) -- (1.5,-0.75);
        \draw[fill=black] (1.5,-0.75) circle (0.05);

        \draw (0.3,-1.2) node {$a$};
        \draw (2.5,-1.4) node {$b$};
        \draw (2, 0.3)   node {$c$};
        \draw (2.2,-0.8) node {$\sigma_i$};
        \draw (1.5,-0.75) node[left] {$\theta_i$};

        \draw [blue, line width = 1.2] (1.5,0.5) -- (1.5,-0.75);
        \draw [red,  line width = 1.2] (1.5,-0.75) -- (4,0);
        \draw [blue, line width = 1.2] (4,0) -- (4.5,-0.5);
    \end {scope}

    \begin {scope} [shift={(6,2)}]
        \draw (0,0) -- (4,0) -- (1,-2) -- cycle;
        \draw (4,0) -- (1.5,-0.75);
        \draw[fill=black] (1.5,-0.75) circle (0.05);

        \draw (0.3,-1.2) node {$a$};
        \draw (2.5,-1.4) node {$b$};
        \draw (2, 0.3)   node {$c$};
        \draw (2.2,-0.8) node {$\sigma_i$};
        \draw (1.5,-0.75) node[left] {$\theta_i$};

        \draw [red,  line width = 1.2] (1.5,0.5) -- (1.5,-0.75);
        \draw [blue, line width = 1.8] (1.5,-0.75) -- (1,-2);
        \draw [red,  line width = 1.2] (1,-2) -- (4,0);
        \draw [blue, line width = 1.2] (4,0) -- (4.5,-0.5);
    \end {scope}

    \begin {scope} [shift={(0,-1)}]
        \draw (-1,-0) node {(ii)};
        \draw (0,0) -- (4,0) -- (1,-2) -- cycle;
        \draw (4,0) -- (1.5,-0.75);
        \draw[fill=black] (1.5,-0.75) circle (0.05);

        \draw (0.3,-1.2) node {$a$};
        \draw (2.5,-1.4) node {$b$};
        \draw (2, 0.3)   node {$c$};
        \draw (2.2,-0.8) node {$\sigma_i$};
        \draw (1.5,-0.75) node[left] {$\theta_i$};

        \draw [blue, line width = 1.2] (1.5,0.5) -- (1.5,-0.75);
        \draw [red,  line width = 1.2] (1.5,-0.75) -- (4,0);
        \draw [blue, line width = 1.2] (4,0) -- (1,-2);
        \draw [red,  line width = 1.2] (1,-2) -- (1.5,-2.5);
    \end {scope}

    \begin {scope} [shift={(6,-1)}]
        \draw (0,0) -- (4,0) -- (1,-2) -- cycle;
        \draw (4,0) -- (1.5,-0.75);
        \draw[fill=black] (1.5,-0.75) circle (0.05);

        \draw (0.3,-1.2) node {$a$};
        \draw (2.5,-1.4) node {$b$};
        \draw (2, 0.3)   node {$c$};
        \draw (2.2,-0.8) node {$\sigma_i$};
        \draw (1.5,-0.75) node[left] {$\theta_i$};

        \draw [red,  line width = 1.2] (1.5,0.5) -- (1.5,-0.75);
        \draw [blue, line width = 1.8] (1.5,-0.75) -- (1,-2);
        \draw [red,  line width = 1.2] (1,-2) -- (1.5,-2.5);
    \end {scope}
\end {tikzpicture}
    \caption {Illustration of the bijection between super $T$-paths and twisted super $T$-paths; here the vertical steps through edge $c$ (or respectively $z$) indicate the beginning (resp. ending) of a $\tau$-step, i.e. teleportation}
    \label {fig:old_new_bijection}
    \end {figure}

    Furthermore, comparing weights to twisted weights, this transformation is weight-preserving.  
    We verify this case-by-case following Figure \ref{fig:old_new_bijection}.  In particular, we see the weights of the $\sigma$-step 
    (or corresponding two steps) on the left-hand-side (respectively right-hand-side) are as in \Cref{table:weights}.

    {\centering
    \begin{table}
    \begin{tabular}{lccccccc}
    & \underline{ORIGINAL}\quad\quad &~&~&~&~&~& \underline{TWISTED}  \\
    &~&~&~&~&~&~&~ \\
    (i)&$\displaystyle \sqrt{\frac{a}{bc}}\theta_i$&~&~&~&~& ~ &$\displaystyle \frac{1}{b} \cdot \sqrt{\frac{ab}{c}} \theta_i$\\
    (ii)& $\displaystyle b \cdot \sqrt{\frac{a}{bc}}\theta_i$ &~&~&~&~& ~ & $\displaystyle \sqrt{\frac{ab}{c}} \theta_i$\\
    (iii) & $\displaystyle \sqrt{\frac{x}{yz}}\theta_j$ &~&~&~&~&~& $\displaystyle \frac{1}{y} \cdot \sqrt{\frac{xy}{z}} \theta_j$\\
    (iv)&$\displaystyle y \cdot \sqrt{\frac{x}{yz}}\theta_j$&~&~&~&~& ~ &$\displaystyle \sqrt{\frac{xy}{z}} \theta_j$ \\
    &~&~&~&~&~&~&~
    \end{tabular}
    \caption{Weights of pieces of a super $T$-path (Left) or twisted super $T$-path (Right), respectively}
    \label{table:weights}
    \end{table}
    }

    \vspace{1em}

    Comparing these expressions, we see that these weights on the left- and right-hand sides indeed agree.  
    Since $\tau$-steps are unweighted, notice that changing the parity of the $\tau$-step does not affect the weight of the super 
    or twisted super $T$-path.  We thus first apply this transformation to a $\sigma$-step in the triangle corresponding to the $\mu$-invariant 
    $\theta_i$, as illustrated in case (i) or (ii).  Then second, we switch the parity of the $\tau$-step $\tau_{ij}$ without changing the weight.  
    Finally, third, we apply the same transformation to a $\sigma$-step in the triangle corresponding to the $\mu$-invariant $\theta_j$, as illustrated 
    in case (iii) or (iv).  Thus we have replaced a super step with a twisted super step, or vice-versa.

    These transformations can be applied either from left-to-right (original to twisted super $T$-paths) or instead 
    from right-to-left, thus providing the inverse map for this bijection.  
\end{proof}

As a consequence of this bijection, we obtain important constraints on $\sigma$-steps of a twisted super $T$-paths that go beyond the axioms listed above.

\begin{corollary} \label{Cor:super_steps} 
    In addition to axioms (T1), (T2), (T3), (T4), (T5'), (T6'), and (T7), we can deduce that in any twisted super $T$-path, we have the following two properties:
    \begin {itemize}
        \item[$(i)$]  we only allow edges moving from a boundary vertex to a face center along a {\bf\textcolor{blue}{$\sigma^A$-edge}} 
                      or allow edges moving from a face center to a boundary vertex along a \textcolor{cyan}{$\sigma^B$-edge}. 
        \item[$(ii)$] any $\tau$-edge, e.g. $\tau_{ij}$ between face center $\theta_i$ and face center $\theta_j$, must be immediately 
                      preceded by the edge $\sigma^A_i$ and immediately followed by the edge $\sigma^B_j$.  
    \end {itemize}
\end{corollary}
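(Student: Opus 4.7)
The plan is to leverage the weight-preserving bijection $\psi$ from \Cref{lem:twisted_bijection} together with the structural constraints on original super $T$-paths established in \cite{moz21}. Recall that in the original framework, every $\sigma$-edge and every $\tau$-edge of a super $T$-path occurs only as part of a ``super step'' of the form $\sigma_i\,\tau_{ij}\,\sigma_j$, where $\sigma_i$ is incident to $\theta_i$, $\tau_{ij}$ teleports between $\theta_i$ and $\theta_j$, and $\sigma_j$ is incident to $\theta_j$. In particular, no $\sigma$- or $\tau$-step can appear in isolation: the three steps come as a packaged triple.

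Given a twisted super $T$-path $t'\in\mathcal{T}_{a,b}$, apply $\psi^{-1}$ to obtain a super $T$-path $t$ in the original sense. By the property above, every $\tau_{ij}$-step in $t$ is flanked by $\sigma_i$ and $\sigma_j$. The local transformations depicted in \Cref{fig:old_new_bijection} describe precisely how $\psi$ converts the super step $\sigma_i\,\tau_{ij}\,\sigma_j$ of $t$ into a twisted super step of $t'$. A case-by-case inspection of (i)--(iv) shows that, regardless of which of the four local configurations applies, the image of the super step in $t'$ consists of a $\sigma^A_i$-edge traversed from a boundary vertex into $\theta_i$, followed by the $\tau_{ij}$-step, followed by a $\sigma^B_j$-edge traversed from $\theta_j$ back out to a boundary vertex. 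This yields property (ii), and property (i) then follows immediately: since every $\sigma$-edge of $t'$ arises from some $\sigma_i$ of $t$ via $\psi$, the only directions of traversal that occur are boundary-to-$\theta_i$ along $\sigma^A_i$ and $\theta_i$-to-boundary along $\sigma^B_i$.

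The main subtlety in executing this plan is to verify, in each of cases (i)--(iv), that the twisted side indeed uses $\sigma^A_i$ (rather than $\sigma^B_i$) immediately before the $\tau$-step and $\sigma^B_j$ (rather than $\sigma^A_j$) immediately after it. This reduces to tracking which of the two non-fan-center vertices of each triangle the bijection routes through: when the path is traversing the twisted auxiliary graph from bottom-to-top (from $a$ toward $b$), entering the triangle containing $\theta_i$ from the $a$-side of $(a,b)$ forces the bijection to route through the vertex closer to $a$, which by definition is the endpoint of $\sigma^A_i$; a symmetric argument, applied at the triangle containing $\theta_j$ for the exit, forces $\sigma^B_j$. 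This is exactly the local geometric content packaged in \Cref{fig:old_new_bijection}, so the corollary follows once each of the four cases is matched up with its twisted image.
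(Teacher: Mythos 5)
Your proof is correct and takes essentially the same approach the paper intends: the corollary is stated as a direct consequence of the bijection of \Cref{lem:twisted_bijection}, and your argument simply makes that dependence explicit by pulling back along $\psi^{-1}$, invoking the fact that original super $T$-paths package every $\sigma$- and $\tau$-step into a triple $\sigma_i\,\tau_{ij}\,\sigma_j$, and then reading off from the four local configurations of \Cref{fig:old_new_bijection} that the twisted image is always $\sigma^A_i\,\tau_{ij}\,\sigma^B_j$ with the claimed orientations. Your remark that the $\sigma^A/\sigma^B$ distinction is forced by which non-fan-center vertex of each triangle the bijection routes through (closer to $a$ on entry, closer to $b$ on exit) is exactly the geometric content the figure encodes.
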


As a consequence of this corollary, twisted super steps must specifically have the form of the edge $\sigma^A_i$ followed by $\tau_{ij}$ and then $\sigma^B_j$.  
Thus we may be more precise than in \Cref{def:twisted_super_steps}.

\begin{proof} [Proof of \Cref{thm:main_twisted}]
This is now an immediate consequence of combining \Cref{lem:twisted_bijection} and Theorem 4.9 of \cite{moz21}.
\end{proof}

\subsection{Bijection between twisted super $T$-paths and double dimer covers}

In this subsection we use the new combinatorial objects presented in the previous subsection to give a weight-preserving bijection between double dimer covers  
on snake graphs and super $T$-paths.  In light of the bijection from \Cref{lem:twisted_bijection}, it is sufficient to provide a weight-preserving 
bijection between \emph{twisted} super $T$-paths and double dimer covers.  

\begin{theorem}
    Given the hypotheses of Lemma \ref{lem:twisted_bijection}, i.e. a triangulation $T$ with longest arc $(a,b)$, we build the corresponding 
    snake graph $G$ following the construction in \Cref{def:Ggamma}.  Then we have a weight-preserving bijection between 
    twisted super $T$-paths and double dimer covers of $G$.
\end{theorem}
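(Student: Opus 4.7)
The plan is to extend the classical MSW bijection (\Cref{thm:MS10}) between ordinary $T$-paths and dimer covers of the snake graph $G$, combining it with the weight-preserving bijection of \Cref{lem:twisted_bijection}. My starting observation is that a twisted super $T$-path with \emph{no} twisted super steps is essentially an ordinary (non-super) $T$-path from $a$ to $b$ in $T$, since by axiom (T6') all $\sigma$-steps and $\tau$-steps appear only inside twisted super steps. Such a path corresponds under MSW to an ordinary dimer cover $M$ of $G$; promoting every edge of $M$ to a doubled edge gives a cycle-free double dimer cover, and one checks directly that the twisted weight (a product of $x_j$'s and $x_j^{-1}$'s) matches the weight of the resulting doubled dimer cover under \Cref{def:weight-M}.

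For twisted super $T$-paths with $k \ge 1$ twisted super steps, the plan is to show that each twisted super step $\sigma_p^A \, \tau_{pq} \, \sigma_q^B$ (with $p < q$) determines a cycle in the double dimer cover enclosing exactly the consecutive tiles of $G$ between the tile whose bottom-left triangle is $\theta_p$ and the tile whose top-right triangle is $\theta_q$. The constraints of \Cref{Cor:super_steps} force the $\sigma^A$ and $\sigma^B$ edges to be traversed in a direction compatible with this enclosure, and the intervening $\tau_{pq}$ step precisely corresponds to ``skipping over'' the interior diagonals that would otherwise be crossed. The segments of the twisted super $T$-path lying outside of twisted super steps are still governed by the MSW bijection and become doubled dimer regions that fill out the rest of $G$; thus the cycles of the resulting double dimer cover are in natural bijection with the twisted super steps of the $T$-path.

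The weight verification proceeds locally. For each twisted super step, the combined twisted weight
\[
\sqrt{\tfrac{x_k x_l}{x_j}}\,\theta_p \cdot 1 \cdot \sqrt{\tfrac{x_{k'} x_{l'}}{x_{j'}}}\,\theta_q
\]
must equal the product of $\sqrt{\text{label}}$'s along the boundary of the corresponding cycle in $G$, multiplied by $\theta_p\theta_q$; the inverse square roots $1/\sqrt{x_j}, 1/\sqrt{x_{j'}}$ arising from the $\sigma$-edges will combine with the adjacent ordinary $T$-edge contributions (in particular, with the doubled dimers on the tiles immediately before and after the cycle, which share the diagonals $x_j$ and $x_{j'}$) to yield exactly the right cycle boundary. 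The main obstacle is verifying this weight matching uniformly when multiple twisted super steps occur and the corresponding cycles sit adjacently in $G$: I will need to carefully track how the $x^{-1}$ factors coming from even $T$-edge steps inside the super region cancel against the absent dimers on the interior of the cycle, and confirm that the positive order on the $\mu$-invariants (used in both \Cref{def:weight-M} and \Cref{def:twisted_weight}) is preserved by the bijection. Once this local analysis is done in each of the geometric cases (cycles that span a single fan segment versus cycles that cross a fan-center boundary), the global bijection follows by concatenation.
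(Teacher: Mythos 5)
Your proposal takes essentially the same route as the paper: decompose a twisted super $T$-path into ordinary-$T$-path segments and twisted super steps, send the ordinary segments through the MSW bijection to doubled-dimer regions, send each twisted super step to a cycle enclosing the sub-snake graph from the $\theta_p$-tile to the $\theta_q$-tile, and verify weights locally. The one detail you gloss over is what happens to the red (even-indexed) steps that immediately precede or follow a twisted super step: the paper resolves this by noting that such a red step along diagonal $x_k$ corresponds to a ``blank'' tile of $G$ (the unique tile with that diagonal), and this is precisely what makes the crossing monomial come out right and what lets the cycle boundary square-roots telescope (the paper's observation~(*) about N/E and S/W labels matching across tiles); once you make that explicit, the ``main obstacle'' you flag about multiple adjacent cycles and the $x^{-1}$ cancellations is handled by the same local bookkeeping.
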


\begin{proof}
    As indicated in Corollary \ref{Cor:super_steps}, every teleportation step, i.e. $\tau$-edge, of a twisted super $T$-path is 
    preceded by a $\sigma^A$-edge and followed by a $\sigma^B$-edge.  
    We thus are able to decompose any twisted super $T$-path into building blocks of three different possible types:
    \begin {itemize}
        \item[$(i)$] Odd-indexed edges $t_i$ that travel along either a boundary edge or internal diagonal of triangulation $T$.  
                     We will refer to such edges as {blue steps}.
        \item[$(ii)$] Even indexed edges $t_i$ that must travel along an internal diagonal of triangulation $T$.  We will refer to such edges as red steps.
        \item[$(iii)$] Twisted super steps, which are triples $(\sigma^A_i, \tau_{ij}, \sigma^B_j)$ such that $\sigma^A_i$ (resp. $\sigma^B_j$) 
                       is an entering (resp. exiting) $\sigma$-step, and $\tau_{ij}$ is a teleportation step between 
                       the two associated face centers $\theta_i$ and $\theta_j$.  By Axiom (T6') in \Cref{def:super-T}, 
                       the steps $\sigma^A_i$ and $\sigma^B_j$ are odd-indexed while $\tau_{ij}$ is even-indexed.
    \end {itemize}

    We map twisted super $T$-paths to double dimer covers in $G$, the snake graph associated to arc $\gamma$ and triangulation $T$, via these building blocks.  
    First off, we map twisted super steps to pieces in double dimer covers that correspond to a cycle around a connected sub-snake graph.  
    More precisely, consider a twisted super step $(\sigma^A_i, \tau_{ij}, \sigma^B_j)$ such that $\tau_{ij}$ teleports from the face center 
    associated to $\theta_i$ to the face center associated to $\theta_j$.  Then there is a unique tile of the snake graph $G$ that contains a 
    triangle in the lower left corner labeled by $\theta_i$ and a unique tile of $G$ that contains a triangle in the upper right corner labeled 
    by $\theta_j$.  Further, this second tile is to the northeast of the first tile since the $j$th face center is closer to the end of $\gamma$ than 
    the $i$th face center, due to axiom (T5').  The cycle circumscribes the connected sub-snake graph beginning and ending at these two tiles.  

    Since a twisted super step must begin and end with an odd-indexed step, a twisted super $T$-path cannot contain two twisted super steps 
    immediately following one another.  Instead, unless they lie at the beginning or end of the twisted super $T$-path, they must be immediately preceded 
    and followed by a red edge.  Excluding twisted super steps and these accompanying red edges, the remaining connected pieces of a twisted super $T$-path 
    all consist entirely of blue and red edges, and are each of odd-length.  Due to the axioms of \Cref{def:super-T}, these connected pieces are ordinary $T$-paths 
    of a subtriangulation $S$ of $T$ with longest diagonal $\gamma'$.  It thus suffices to apply the bijection from \cite{ms10} to obtain dimer covers of sub-snake 
    graphs associated to such ordinary $T$-paths on subtriangulation $S$ and longest diagonal $\gamma'$.  If $\gamma'$ starts on the bottom of the triangle with face 
    center $\theta_i$ and ends at the top of the triangle with face center $\theta_j$, we get a dimer cover on the connected sub-snake graph between the tile with lower-left 
    corner $\theta_i$ and upper-right corner $\theta_j$.  We duplicate each of the edges in this dimer cover to obtain a double dimer cover.

    Lastly, we note that the red edges immediately preceding or following a twisted super step correspond to blank tiles in the snake graph $G$ such that 
    if the red edge is on the diagonal $\tau_k$, then the blank tile in $G$ is the unique one with diagonal $x_k$.   

    Every diagonal of $T$ is either crossed by one of these ordinary $T$-paths, by one of the red edges preceding or following a twisted super step, 
    or by the $\tau$-step as part of a twisted super step.  Hence every tile of $G$ is respectively either covered by doubled edges, a blank tile, 
    or a cycle of single edges.  The inverse map combines the inverse map in \cite{ms10} to get ordinary $T$-paths from double dimer covers on a 
    connected sub-snake graph consisting exclusively of doubled edges and sending cycles to the unique twisted super step defined by the face centers associated 
    to the beginning and ending tiles.  

    With this combinatorial bijection defined, we wish to also verify that the weights of twisted super $T$-paths agree with weights of the associated double dimer covers.  
    Again, we utilize the decomposition of a twisted super $T$-path into blue edges, red edges and twisted super steps.  The weights of blue edges correspond to the weights 
    of the associated dimer in $G$, using the fact that we map blue edges in a twisted super $T$-path to a doubled-edge in $G$ but then weight it by a square-root.

    Similarly, the weights of red edges do not contribute anything to the numerator but either correspond to a piece of an ordinary $T$-path or to a blank 
    tile associated to the edge preceding or following a twisted super step.  Either way, the weights of such red edges agree with the contributions to the weight of the 
    double dimer cover given by the crossing monomial in the denominator.  

    Finally, a twisted super step consists of a $\sigma^A_i$-step, an unweighted $\tau$-step, and a $\sigma^B_j$-step.  As illustrated in the right-hand-side of 
    Figure \ref{fig:old_new_bijection}, in cases (ii) and (iv), the weight of the $\sigma^A_i$-edge is $\sqrt{\frac{ab}{c}} \theta_i$ while the weight of 
    the $\sigma^B_j$-edge is $\sqrt{\frac{xy}{z}}\theta_j$.  

    Letting $\gamma_{AB}$ denote the arc starting from the source of the $\sigma^A_i$-edge and ending at the target of the $\sigma^B_j$-edge, 
    let $G_{\gamma_{AB}}$ be the corresponding snake graph, as defined in Definition \ref{def:Ggamma}.  By construction, $a$ and $b$ are the weights of the 
    SW edges of the first tile in $G_{\gamma_{AB}}$ while $x$ and $y$ are the weights of the NE edges of the last tile.

    Following Theorem \ref{thm:MS10}, we consider the denominator cross$(\gamma_{AB})$ which is the product of the weights of all diagonals 
    in $G_{\gamma_{AB}}$.  However, the snake graph and its edge-weighting is also constructed so that the following is true:

    \vspace{0.5em}

    (*) For every\footnote{Exluding the diagonal edges corresponding to the first and last tiles of $G_{\gamma_{AB}}$.} 
    diagonal edge in $G_{\gamma_{AB}}$, the N or E edge of the previous tile has the same weight, as does the S or W edge of succeeding tile.  
    In both cases, we pick the one edge out of the two which is a boundary (as opposed to internal) edge of the snake graph.  

    \vspace{0.5em}

    \noindent As a consequence, the product of the square root of edge weights of all boundary edges of $G_{\gamma_{AB}}$ exactly 
    equals $\sqrt{ab \cdot \frac{\mathrm{cross}(\gamma_{AB})^2}{cz} \cdot xy}$ where $c$ (respectively $z$) denotes the diagonal of the first (resp. last) 
    tile of $G_{\gamma_{AB}}$.  We include $\theta_i$ and $\theta_j$ in this product, corresponding to the beginning and end of the twisted super step; or the 
    first and last triangles of the corresponding snake graph.

    In conclusion, the weight contributed to a twisted super $T$-path by a twisted super step consisting of ($\sigma^A_i, \tau_{ij}, \sigma^B_j$) agrees 
    with the weight contributed to a double dimer cover via a cycle of single edges around the associated sub-snake graph.
\end{proof}

\bigskip

\section {Lattice Paths and Dual Snake Graphs}

\bigskip

\subsection {Duality of Snake Graphs}

In this section, we describe an involution on the set of all snake graphs which was described in \cite{propp05}, and also discussed
extensively in \cite{claussen20}. Under this involution, dimer covers are taken to lattice paths. By a \emph{lattice path} on a snake graph,
we mean a path from the bottom-left corner of the first tile to the top-right corner of the last tile, such that every step goes
either right or up.

\bigskip

\begin {definition}
    Define an involution $x \mapsto \overline{x}$ on the set $\{\text{R},\text{U}\}$ given by $\overline{\text{R}} = \text{U}$ and 
    $\overline{\text{U}} = \text{R}$.
\end {definition}

\bigskip

\begin {definition}
    Define an involution $w \mapsto \overline{w}$ on the set of words in the alphabet $\{\text{R},\text{U}\}$ as follows.
    If $w = w_1 w_2 w_3 \cdots w_{2n}$ or $w = w_1 w_2 w_3 \cdots w_{2n+1}$, then define
    $\overline{w} = \overline{w}_1 w_2 \overline{w}_3 \cdots w_{2n}$ or $\overline{w}_1 w_2 \overline{w}_3 \cdots w_{2n} \overline{w}_{2n+1}$.
    In other words, the involution toggles the odd-numbered letters.
\end {definition}

\bigskip

\begin {definition}
    The involution on snake graphs is defined by applying the involution to the word $W(G)$. In other words, for a snake graph $G$, 
    we define the \emph{dual snake graph} $\overline{G}$ so that $W(\overline{G}) = \overline{W(G)}$.
\end {definition}

\bigskip

If the snake graph $G$ is labelled, then $\overline{G}$ inherets a labelling in the following way. Recall the tiles $T_i$ of $G$
alternate in orientation, so that $T_{2k+1}$ has usual orientation, and $T_{2k}$ has reversed orientation. For the dual snake graph,
if $i$ is odd, then the bottom and left sides of $\overline{T}_i$ are labelled the same as $T_i$, but the top and right labels are swapped.
If $i$ is even, then the top and right sides of $\overline{T}_i$ are the same as $T_i$, and the bottom and left are swapped. This is illustrated
in \Cref{fig:dual_tiles}. An example of two dual snake graphs is pictured in \Cref{fig:dual_snakes}.

\begin {figure}[h]
\centering
\begin {tikzpicture}
    \draw (-1.5,0.5) node {$T_{2k+1}$};
    \draw (0,0) --node[below]{$b$} (1,0) --node[right]{$c$} (1,1) --node[above]{$d$} (0,1) --node[left]{$a$} (0,0);

    \begin {scope} [shift={(5,0)}]
        \draw (-1.5,0.5) node {$\overline{T}_{2k+1}$};
        \draw (0,0) --node[below]{$b$} (1,0) --node[right]{$d$} (1,1) --node[above]{$c$} (0,1) --node[left]{$a$} (0,0);
    \end {scope}

    \begin {scope} [shift={(0,-3)}]
        \draw (-1.5,0.5) node {$T_{2k}$};
        \draw (0,0) --node[below]{$b$} (1,0) --node[right]{$c$} (1,1) --node[above]{$d$} (0,1) --node[left]{$a$} (0,0);

        \begin {scope} [shift={(5,0)}]
            \draw (-1.5,0.5) node {$\overline{T}_{2k}$};
            \draw (0,0) --node[below]{$a$} (1,0) --node[right]{$c$} (1,1) --node[above]{$d$} (0,1) --node[left]{$b$} (0,0);
        \end {scope}
    \end {scope}
\end {tikzpicture}
\caption {Relation between tile labeling in $G$ and $\overline{G}$}
\label {fig:dual_tiles}
\end {figure}
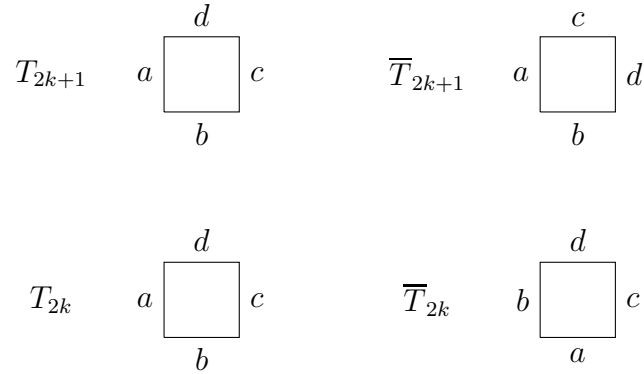

\begin {figure}[h]
\centering
\begin {tikzpicture}
    \draw (0,1) --node[left]{$a$}  (0,0) --node[below]{$b$} (1,0) --node[right]{$x$} (1,1) --node[above]{$j$} (0,1);
    \draw (1,0) --node[below]{$c$} (2,0) --node[right]{$y$} (2,1) --node[above]{$i$} (1,1);
    \draw (2,0) --node[below]{$d$} (3,0) --node[right]{$z$} (3,1) --node[above]{$h$} (2,1);
    \draw (3,0) --node[below]{$e$} (4,0) --node[right]{$f$} (4,1) --node[above]{$g$} (3,1);

    \draw (2,-1.5) node {$W(G) = \text{RRR}$};

    \begin {scope} [shift={(6,0)}]
        \draw (0,1) --node[left]{$a$}  (0,0) --node[below]{$b$} (1,0) --node[right]{$j$} (1,1) --node[above]{$x$} (0,1);
        \draw (1,1) --node[right]{$y$} (1,2) --node[above]{$i$} (0,2) --node[left]{$c$} (0,1);
        \draw (1,1) --node[below]{$d$} (2,1) --node[right]{$h$} (2,2) --node[above]{$z$} (1,2);
        \draw (2,2) --node[right]{$f$} (2,3) --node[above]{$g$} (1,3) --node[left]{$e$} (1,2);

        \draw (1,-1.5) node {$W(G) = \text{URU}$};
    \end {scope}
\end {tikzpicture}
\caption {Two dual snake graphs}
\label {fig:dual_snakes}
\end {figure}
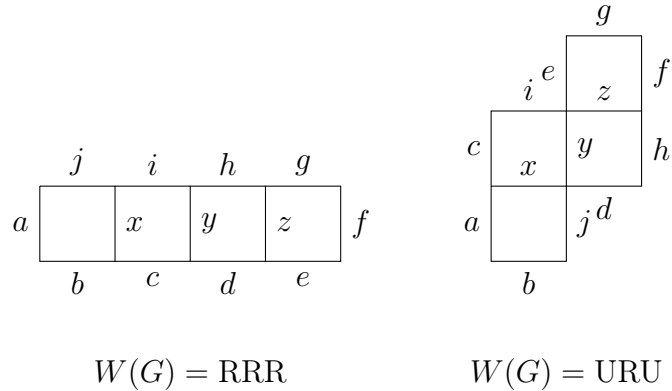

There is a bijection, described in \cite{propp05} and \cite{claussen20}, between perfect matchings (dimer covers) of $G$ and lattice paths in $\overline{G}$
going from the bottom left to the top right corner. With the labelling convention described above for $\overline{G}$, this bijection is weight-preserving,
where the weight of a lattice path is the product of the weights of the edges in the path. An example of this correspondence is pictured in \Cref{fig:dimers_lattice_paths}.

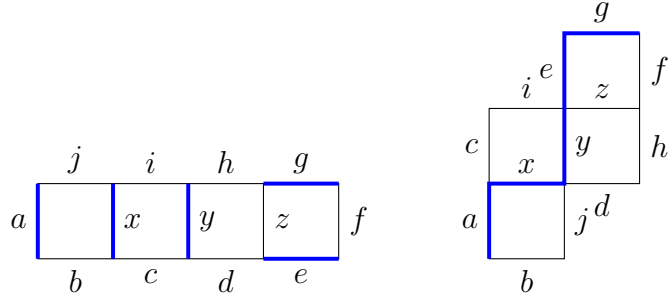
\begin {figure}[h]
\centering
\begin {tikzpicture}
    \draw (0,1) --node[left]{$a$}  (0,0) --node[below]{$b$} (1,0) --node[right]{$x$} (1,1) --node[above]{$j$} (0,1);
    \draw (1,0) --node[below]{$c$} (2,0) --node[right]{$y$} (2,1) --node[above]{$i$} (1,1);
    \draw (2,0) --node[below]{$d$} (3,0) --node[right]{$z$} (3,1) --node[above]{$h$} (2,1);
    \draw (3,0) --node[below]{$e$} (4,0) --node[right]{$f$} (4,1) --node[above]{$g$} (3,1);

    \draw[blue, line width = 1.5] (0,0) -- (0,1);
    \draw[blue, line width = 1.5] (1,0) -- (1,1);
    \draw[blue, line width = 1.5] (2,0) -- (2,1);
    \draw[blue, line width = 1.5] (3,0) -- (4,0);
    \draw[blue, line width = 1.5] (3,1) -- (4,1);

    \begin {scope} [shift={(6,0)}]
        \draw (0,1) --node[left]{$a$}  (0,0) --node[below]{$b$} (1,0) --node[right]{$j$} (1,1) --node[above]{$x$} (0,1);
        \draw (1,1) --node[right]{$y$} (1,2) --node[above]{$i$} (0,2) --node[left]{$c$} (0,1);
        \draw (1,1) --node[below]{$d$} (2,1) --node[right]{$h$} (2,2) --node[above]{$z$} (1,2);
        \draw (2,2) --node[right]{$f$} (2,3) --node[above]{$g$} (1,3) --node[left]{$e$} (1,2);

        \draw[blue, line width = 1.5] (0,0) -- (0,1) -- (1,1) -- (1,3) -- (2,3);
    \end {scope}
\end {tikzpicture}
\caption {Correspondence between dimer covers of $G$ and lattice paths in $\overline{G}$}
\label {fig:dimers_lattice_paths}
\end {figure}

\subsection {Double Lattice Paths}

Recall that a double dimer cover is a multiset of edges such that each vertex is incident to exactly two edges from the multiset.
Another way of describing it is as follows. Let $C(G)$ be the set of dimer covers of a snake graph $G$. There is a natural map
$\pi \colon C(G) \times C(G) \to D(G)$. For each pair of dimer covers, $\pi$ maps the pair to the corresponding multiset. This map
is not in general injective, and different pairs can result in the same double dimer cover. Despite this non-uniqueness, it is often
convenient to think of double dimer covers as simply pairs of dimer covers.

In the previous section, we described a bijection which takes dimer covers on $G$ to lattice paths on $\overline{G}$. Let $\ell(\overline{G})$ be the
set of lattice paths on $\overline{G}$, and let $f \colon C(G) \to \ell(\overline{G})$ be this bijection. We then naturally
get a map $f \times f \colon C(G) \times C(G) \to \ell(\overline{G}) \times \ell(\overline{G})$. As before, we have a map
$\pi'$ from $\ell(\overline{G}) \times \ell(\overline{G})$ to the set of multisets of edges on $\overline{G}$. 

\bigskip

\begin {definition}
    Given a snake graph $G$, define the set $L(\overline{G})$ to be the image of the map $\pi'$.
    Its elements are called \emph{double lattice paths} on $\overline{G}$.
\end {definition}

\bigskip

In other words, we want to think of a double
lattice path as the superposition of two lattice paths (despite the fact that, as with double dimer covers, this is not always unique).

As with double dimer covers, we draw edges of a lattice path as wavy orange lines, where a solid blue line indicates two edges superimposed.

The following is a convenient way of specifying a double lattice path.

\bigskip

\begin {definition}
    Given a snake graph $G$, consider all ways to label each tile with one of the numbers $0$, $1$, or $2$.
    Let $a_i$ be the label of tile $T_i$. Let $X(G)$ be the subset of labelings such that:
    \begin {itemize}
        \item If $T_{i+1}$ is to the right of $T_i$, then $a_i \leq a_{i+1}$
        \item If $T_{i+1}$ is above $T_i$, then $a_i \geq a_{i+1}$
    \end {itemize}
\end {definition}

\bigskip

\begin {theorem}
    There is a bijection between $X(G)$ and $L(G)$.
\end {theorem}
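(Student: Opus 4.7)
The plan is to give an explicit bijection $\Phi \colon L(G) \to X(G)$ based on the following tile-by-tile ``height function.'' Given any single lattice path $p$ on $G$ from the bottom-left corner of $T_1$ to the top-right corner of $T_n$, each tile $T_i$ lies entirely on one side of $p$; define $b_i(p) := 0$ if $T_i$ is above-left of $p$, and $b_i(p) := 1$ if $T_i$ is below-right. For a double lattice path $P \in L(G)$, realized as the superposition of an ordered pair $(p_1, p_2)$, set
\[ \Phi(P)_i := b_i(p_1) + b_i(p_2) \in \{0,1,2\}. \]
The map $\Phi$ is well-defined on multisets: any two ordered decompositions of $P$ differ by interchanging portions of $p_1$ and $p_2$ between commonly visited vertices, and each such interchange permutes the pair $\bigl(b_i(p_1), b_i(p_2)\bigr)$ at each tile while preserving their sum.

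To see that $\Phi(P) \in X(G)$, I would analyze how $b_i(p)$ changes from tile to tile for a single path $p$. If $T_{i+1}$ is to the right of $T_i$ and $p$ does not use the vertical edge shared by $T_i$ and $T_{i+1}$, then both tiles lie on the same side of $p$, so $b_i(p) = b_{i+1}(p)$. If $p$ does use this shared edge, then $T_i$ is above-left of $p$ and $T_{i+1}$ is below-right, giving $b_i(p) = 0$ and $b_{i+1}(p) = 1$. In either case $b_i(p) \le b_{i+1}(p)$, and summing over $p \in \{p_1, p_2\}$ gives $a_i \le a_{i+1}$. The dual argument when $T_{i+1}$ is above $T_i$ yields $a_i \ge a_{i+1}$, so $\Phi(P) \in X(G)$ as required.

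For the inverse $\Psi \colon X(G) \to L(G)$, I would build the multiset of edges directly from $(a_i)$ using formulas that fall out of the above analysis: a non-shared edge on the below-right side of $T_i$ (bottom or right) is used with multiplicity $2 - a_i$; a non-shared above-left edge (top or left) with multiplicity $a_i$; an edge shared by $T_i$ and $T_{i+1}$ with multiplicity $|a_{i+1} - a_i|$. The monotonicity axioms ensure these formulas are non-negative and consistent on every shared edge (the prescriptions from $T_i$ and $T_{i+1}$ agree). The main obstacle is to verify that this multiset actually decomposes as the superposition of two honest monotone lattice paths from the start to the end of $G$. I would handle this by induction on the number of tiles: peeling off the last tile reduces the problem to a smaller snake graph, and within any maximal run of consecutive tiles with $a_i = 1$, the monotonicity constraints force a unique compatible assignment of $b$-values (up to the global swap $p_1 \leftrightarrow p_2$, which leaves the multiset unchanged). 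Once this local-to-global compatibility is established, $\Psi \circ \Phi = \mathrm{id}$ and $\Phi \circ \Psi = \mathrm{id}$ follow immediately by matching the edge-multiplicity formulas to the defining data of $\Phi$.
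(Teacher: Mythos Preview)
Your proposal is correct and follows essentially the same approach as the paper: both define the map $L(G)\to X(G)$ by labeling each tile with the number of the two constituent lattice paths that pass above it. The paper's proof is considerably more terse---it simply asserts well-definedness, the monotonicity inequalities, and unique reconstructibility---whereas you spell out the well-definedness under change of decomposition and give a tile-by-tile case analysis for the inequalities; your inverse via edge multiplicities and induction is more elaborate than necessary (one can instead split $(a_i)$ into the two $\{0,1\}$-valued sequences $c_i=[a_i\ge 1]$ and $d_i=[a_i\ge 2]$, each of which inherits the monotonicity and hence determines a single lattice path), but the plan is sound.
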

\begin {proof}
    Given a double lattice path, choose a pre-image of $\pi'$ in $\ell(G) \times \ell(G)$. In other words, choose a pair of lattice paths
    which represents the double lattice path. Label each tile according to how many of the two lattice paths go above this tile. Because
    the lattice paths can only go up and right, this ensures the inequalities, so this gives an element of $X(G)$. Conversely, given a
    choice of labels $0$, $1$, or $2$ for each tile, the double lattice path can uniquely be reconstructed.
\end {proof}

\bigskip

\begin {definition}
    The weight of a double lattice path $P$ is the product of the square roots of all edge labels in $P$, times the product of odd variables corresponding to the
    beginning and end of each cycle in $P$. The product of odd variables is taken in the positive order.
\end {definition}

\bigskip

The following is immediate from the weight-preserving bijection mentioned in the previous section.

\bigskip

\begin {theorem}
    Let $G$ be the snake graph of a diagonal $\gamma=(i,j)$ in a triangulated polygon. Then
    \[ x_{ij} = \frac{1}{\mathrm{cross}(i,j)} \sum_{P \in L(\overline{G})} \mathrm{wt}(P) \]
\end {theorem}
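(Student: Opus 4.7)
The plan is to leverage \Cref{thm:main}(a), which already gives $x_{ij} = \frac{1}{\mathrm{cross}(i,j)} \sum_{M \in D(G)} \mathrm{wt}(M)$, and transport this identity from double dimer covers on $G$ to double lattice paths on $\overline{G}$ via the snake-graph duality. The authors have essentially done all the setup: the bijection $f \colon C(G) \to \ell(\overline{G})$ between dimer covers and lattice paths (from \cite{propp05,claussen20}) is weight-preserving with respect to edge weights, and the bijection extends to $f \times f \colon C(G) \times C(G) \to \ell(\overline{G}) \times \ell(\overline{G})$. So the heart of the matter is checking that the induced map descends cleanly to a weight-preserving bijection on the quotients $D(G) \to L(\overline{G})$.

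First I would verify that the diagram
\[
\begin{array}{ccc}
C(G) \times C(G) & \xrightarrow{f \times f} & \ell(\overline{G}) \times \ell(\overline{G}) \\
\downarrow \pi & & \downarrow \pi' \\
D(G) & & L(\overline{G})
\end{array}
\]
commutes in the sense that two pairs of dimer covers map to the same double dimer cover if and only if their images under $f \times f$ map to the same double lattice path. Since $\pi'$ is defined as the map to multisets of edges and $L(\overline{G})$ is \emph{defined} as its image, this essentially follows from $f$ being a bijection edge-for-edge. Thus there is an induced bijection $\overline{f} \colon D(G) \to L(\overline{G})$.

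Next I would check weight preservation. The edge-weight part is immediate from the classical bijection together with the labeling convention for $\overline{G}$ given in \Cref{fig:dual_tiles}: a dimer in $G$ is a single edge, weighted as the square root of its label (per \Cref{def:weight-M}), and the corresponding edge in the lattice path on $\overline{G}$ carries the same label by the duality convention. Doubled edges in $D(G)$ correspond to doubled edges in $L(\overline{G})$, giving matching contributions. The remaining content is the cycle weights: each cycle $C$ in a double dimer cover surrounds a connected sub-snake graph, and contributes $\theta_i \theta_j$ where $\theta_i, \theta_j$ label the bottom-left and top-right triangles of $C$. Under the duality, a cycle in a double dimer cover on $G$ corresponds to a region in $\overline{G}$ where the two superimposed lattice paths enclose the same sub-snake graph, so the cycles in $L(\overline{G})$ (as defined) identify the same pair of corner triangles, giving the same $\theta_i\theta_j$ factor in the same positive order.

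The main obstacle, and the only place requiring care, is checking that the notion of ``bottom-left and top-right corner triangles'' of a cycle is respected under the duality, because the labeling of $\overline{G}$ swaps certain edges of the tiles (depending on parity). However, the triangle labels $\theta_k$ in the corners of tiles are determined by the underlying triangulation and the position of the tile in the snake graph, not by which edges of the tile are labeled where. Since the duality preserves the sequence of tiles and the beginning/ending tiles of any cycle, the corner triangles match. Once this is verified, substituting the bijection into \Cref{thm:main}(a) yields the stated formula immediately.
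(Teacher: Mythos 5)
Your proposal is correct and follows essentially the same route as the paper, which states the result as ``immediate from the weight-preserving bijection mentioned in the previous section''; you have simply spelled out the details (the commuting square with $\pi$, $\pi'$, the edge-weight matching, and the cycle-corner verification) that the paper leaves implicit. The one point worth noting explicitly, which you do address, is that the $\theta$-labels of tile corners are intrinsic to the underlying triangulation and the tile's position, so they are unaffected by the edge-label swaps in the dual snake graph.
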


\bigskip

\section {Order Ideals and Distributive Lattices}

\bigskip

It was shown in \cite{propp02} that the set of dimer covers of a planar graph forms a distributive lattice,
whose cover relations are local moves called \emph{twists}. By Birkhoff's theorem, every distributive lattice
is isomorphic to the lattice of order ideals of some poset. For a poset $P$, let $J(P)$ denote the lattice of
order ideals in $P$ (ordered by inclusion). For the example of the lattice of dimer covers of
a snake graph $G$, a description of this poset was given in \cite{msw13} (and studied also in \cite{claussen20} and \cite{mss21}).

Using the bijection described earlier between dimer covers of $G$ and lattice paths of $\overline{G}$, it will be
easier to use the latter description. We will describe a bijection between lattice paths of $\overline{G}$ 
and lower order ideals of a certain poset $P(\overline{G})$. The Hasse diagram of $P(\overline{G})$ is constructed as follows. Put a vertex
inside each tile of $\overline{G}$, and connect two vertices with an edge (a cover relation in $P(\overline{G})$) if the tiles share a side.
Finally, rotate this picture clockwise $45^\circ$ to get the Hasse diagram of $P(\overline{G})$. This is pictured in \Cref{fig:hasse_diagram}.

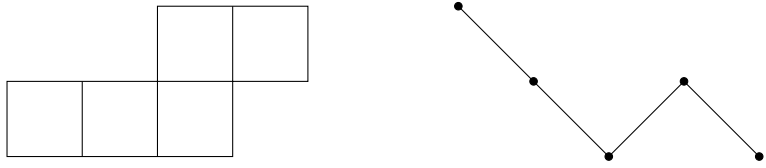
\begin {figure}[h]
\centering
\begin {tikzpicture}
    \draw (0,0) -- ++(3,0) -- ++(0,1) -- ++(1,0) -- ++(0,1) -- ++(-2,0) -- ++(0,-1) -- ++(-2,0) -- cycle;
    \draw (1,0) -- ++(0,1);
    \draw (2,0) -- ++(0,1) -- ++(1,0) -- ++(0,1);

    \begin {scope} [shift={(6,0)}]
        \draw (0,2) -- ++(2,-2) -- ++(1,1) -- ++(1,-1);
        \draw[fill=black] (0,2) circle (0.05);
        \draw[fill=black] (1,1) circle (0.05);
        \draw[fill=black] (2,0) circle (0.05);
        \draw[fill=black] (3,1) circle (0.05);
        \draw[fill=black] (4,0) circle (0.05);
    \end {scope}
\end {tikzpicture}
\caption {The Hasse diagram of the poset $P(\overline{G})$}
\label {fig:hasse_diagram}
\end {figure}

We define an order relation on the set of lattice paths in $\overline{G}$ as follows. The minimal lattice path
is the one which follows the bottom-right edges of the boundary of $\overline{G}$, and the maximal path is the one wich
uses the top-left edges of the boundary. If a lattice path uses both the bottom and right edge of some tile, then this path
is covered by the path which swaps those for the top and left edges of the same tile.

The bijection between lattice paths and order ideals is as follows. As described above, the elements of $P(\overline{G})$ correspond to
the tiles of $\overline{G}$. For each lattice path, there is some subset of tiles which are underneath the path. The corresponding
subset of vertices of the Hasse diagram of $P(\overline{G})$ is an order ideal. The minimal lattice path has no tiles underneath it,
and so corresponds to the empty set. An example is pictured in \Cref{fig:order_ideals}.

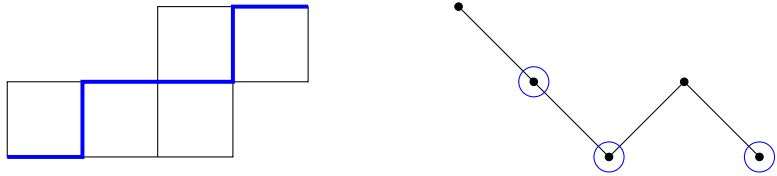
\begin {figure}[h]
\centering
\begin {tikzpicture}
    \draw (0,0) -- ++(3,0) -- ++(0,1) -- ++(1,0) -- ++(0,1) -- ++(-2,0) -- ++(0,-1) -- ++(-2,0) -- cycle;
    \draw (1,0) -- ++(0,1);
    \draw (2,0) -- ++(0,1) -- ++(1,0) -- ++(0,1);

    \draw[blue, line width = 1.5] (0,0) -- (1,0) -- (1,1) -- (3,1) -- (3,2) -- (4,2);

    \begin {scope} [shift={(6,0)}]
        \draw (0,2) -- ++(2,-2) -- ++(1,1) -- ++(1,-1);
        \draw[fill=black] (0,2) circle (0.05);
        \draw[fill=black] (1,1) circle (0.05);
        \draw[fill=black] (2,0) circle (0.05);
        \draw[fill=black] (3,1) circle (0.05);
        \draw[fill=black] (4,0) circle (0.05);

        \draw[blue, fill=none] (1,1) circle (0.2);
        \draw[blue, fill=none] (2,0) circle (0.2);
        \draw[blue, fill=none] (4,0) circle (0.2);
    \end {scope}
\end {tikzpicture}
\caption {A lattice path in $\overline{G}$ and its corresponding order ideal in $P(\overline{G})$}
\label {fig:order_ideals}
\end {figure}

We can extend this idea to the current situation of double dimer covers and double lattice paths.
Let $\Bbb{P}(\overline{G})$ be the product of $P(\overline{G})$ with a chain of length 2 (with elements $0 < 1$).
The following is the double dimer analogue of the situation described above.

\bigskip

\begin {theorem} \label{thm:poset_isomorphism}
    There is a poset isomorphism $L(G) \cong J(\Bbb{P}(G))$, between the set of double lattice paths in $G$ and the
    lattice of lower order ideals in $\Bbb{P}(G)$.
\end {theorem}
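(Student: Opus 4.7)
The plan is to factor the desired poset isomorphism through the intermediate set $X(G)$ of tile labelings in $\{0,1,2\}$, which the preceding theorem already places in bijection with double lattice paths. Specifically, I will first upgrade the established bijection $L(G) \leftrightarrow X(G)$ to a poset isomorphism, then construct a poset isomorphism $X(G) \cong J(\Bbb{P}(G))$, and compose the two.

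For the first step, I endow $X(G)$ with the pointwise order on labelings and $L(G)$ with the order transported from $X(G)$ (equivalently, the componentwise order on pairs $(P_1, P_2)$ of lattice paths representing a double path). Under the bijection sending a double path to the labeling $a(T) = \#\{\text{paths passing above } T\}$, raising either component of the pair clearly raises $a$ on the affected tiles, so the bijection becomes an isomorphism of posets tautologically.

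The heart of the argument is the construction and verification of the map
\[ \Phi \colon X(G) \to J(\Bbb{P}(G)), \qquad \Phi(a) := \{(T, 0) : a(T) \geq 1\} \cup \{(T, 1) : a(T) \geq 2\}. \]
The main step is to check that $\Phi(a)$ is indeed a lower order ideal in $\Bbb{P}(G) = P(G) \times \{0 < 1\}$. Downward closure in the chain factor is automatic: $(T, 1) \in \Phi(a)$ forces $a(T) \geq 2 \geq 1$, hence $(T, 0) \in \Phi(a)$. Downward closure in the $P(G)$ factor reduces to adjacent cover relations. By the construction of the Hasse diagram of $P(G)$ via the $45^\circ$ clockwise rotation of the tile-adjacency graph, an R-step between tiles $T_i$ and $T_{i+1}$ yields the cover $T_{i+1} < T_i$ in $P(G)$, while a U-step yields $T_{i+1} > T_i$. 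The defining inequalities of $X(G)$ (namely $a(T_i) \leq a(T_{i+1})$ for R-steps and $a(T_i) \geq a(T_{i+1})$ for U-steps) then translate exactly into the assertion that $a$ is order-reversing as a function $P(G) \to \{0, 1, 2\}$, so that each level set $\{T : a(T) \geq k\}$ is an order ideal of $P(G)$. This gives the required downward closure of $\Phi(a)$ in the product poset.

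Next I would exhibit the inverse $\Psi \colon I \mapsto a_I$ with $a_I(T) := |\{j \in \{0,1\} : (T, j) \in I\}|$, verify that $a_I \in X(G)$ by reversing the cover-relation analysis above, and confirm that $\Phi$ and $\Psi$ are mutual inverses. Order compatibility is then immediate: $a \leq a'$ pointwise if and only if $\Phi(a) \subseteq \Phi(a')$, since $a(T) \geq k$ is logically equivalent to $(T, k-1) \in \Phi(a)$ for $k \in \{1, 2\}$. The main point requiring care is purely bookkeeping---correctly aligning the R/U adjacency convention on the snake graph with the up/down convention in the Hasse diagram of $P(G)$---but once this translation is pinned down the rest of the proof is a direct unpacking of definitions. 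The resulting bijection is automatically a lattice isomorphism, since $J(\Bbb{P}(G))$ is distributive by Birkhoff's theorem and the pointwise-ordered $X(G)$ is a sublattice of the product of chains $\{0, 1, 2\}^n$, where $n$ is the number of tiles of $G$.
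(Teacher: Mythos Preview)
Your proposal is correct and follows essentially the same approach as the paper: both factor through $X(G)$ and use the identical map sending a labeling $a$ to the order ideal $\{(T,0): a(T)\geq 1\}\cup\{(T,1): a(T)\geq 2\}$, then verify order-preservation via cover relations. You supply more detail than the paper does in checking that the image is an order ideal (the paper simply asserts that the defining inequalities of $X(G)$ guarantee this), and your translation of the R/U adjacency convention into the Hasse-diagram orientation of $P(G)$ is accurate.
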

\begin {proof}
    Since $\Bbb{P}(G) = P(G) \times \{0,1\}$,
    we will write elements as $(x,0)$ or $(x,1)$, where $x \in P(G)$. 

    We already have a bijection between $L(G)$ and $X(G)$. So we will give a bijection between $X(G)$
    and $J(\Bbb{P}(G))$. The tiles in the snake graph correspond to the elements of the poset $P(G)$.
    Let $p_i$ be the element of the poset corresponding to the tile $T_i$.
    Given a labelling from $X(G)$, we will construct an order ideal. 
    If $T_i$ is labelled $0$, then neither $(p_i,0)$ nor $(p_i,1)$ are in the order ideal.
    If $T_i$ is labelled $1$, then $(p_i,0)$ is in the order ideal, but $(p_i,1)$ is not.
    If $T_i$ is labelled $2$, then both $(p_i,0)$ and $(p_i,1)$ are in the order ideal.
    The inequalities defining $X(G)$ guarantee that this is an order ideal.
    The inverse to this map is obvious, and it is clearly a bijection.

    To see that this bijection is order-preserving, note that the cover relations in terms of $X(G)$ are simply changing the label
    of a tile by increasing its value by $1$. Under the bijection described above, this corresponds to adding a single element (either $(p_i,0)$ or $(p_i,1)$)
    to the order ideal, which is a cover relation in the lattice of order ideals.
\end {proof}

\bigskip

\begin {example}
    An example of the poset isomorphism from \Cref{thm:poset_isomorphism} is shown in \Cref{fig:poset_isomorphism}.

    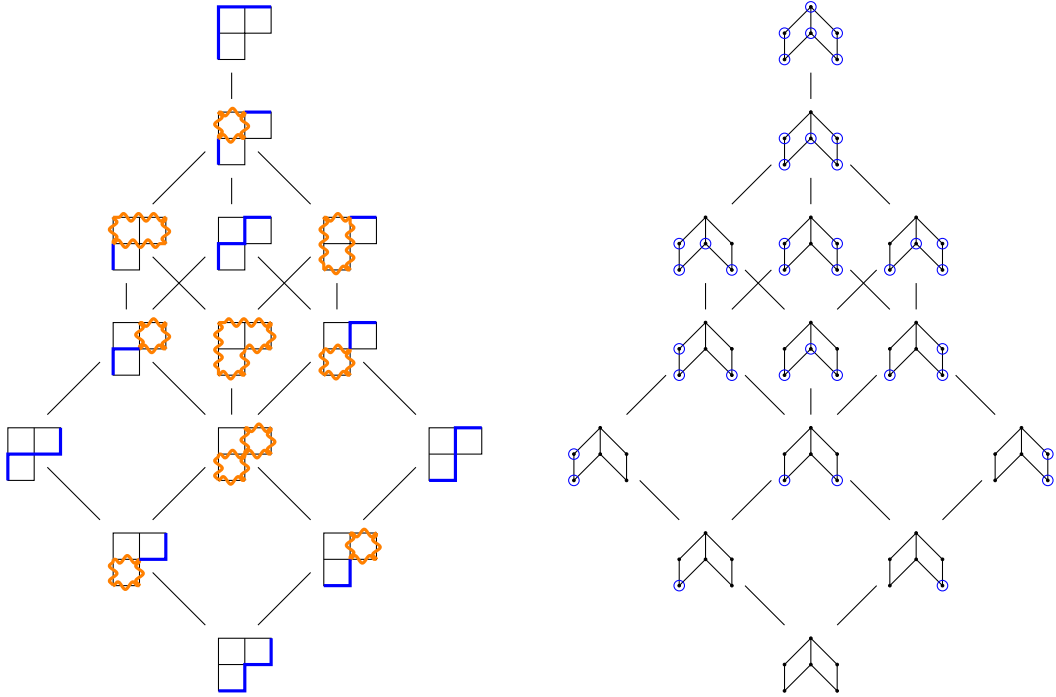
\begin {figure}[h]
    \centering
    \begin {tikzpicture}[scale=0.35]
    \tikzset {decoration={snake, amplitude=0.5mm, segment length = 2mm}}
    \newcommand{\drawsnake}{
        \draw (0,0) -- (0,1) -- (3,1) -- (3,0) -- cycle;
        \draw (1,0) -- (1,1);
        \draw (2,0) -- (2,1);
    }

    \newcommand{\drawdualsnake}{
        \draw (0,0) -- (0,2) -- (2,2) -- (2,1) -- (1,1) -- (1,0) -- cycle;
        \draw (0,1) -- (1,1) -- (1,2);
    }

    \newcommand{\drawposet}{
        \begin {scope}[shift={(-0.5,0)}]
        \draw[fill=black] (0,0) circle (0.05);
        \draw[fill=black] (1,1) circle (0.05);
        \draw[fill=black] (2,0) circle (0.05);
        \draw[fill=black] (0,1) circle (0.05);
        \draw[fill=black] (1,2) circle (0.05);
        \draw[fill=black] (2,1) circle (0.05);

        \draw (0,0) -- (1,1) -- (2,0);
        \draw (0,1) -- (1,2) -- (2,1);
        \draw (0,0) -- (0,1);
        \draw (1,1) -- (1,2);
        \draw (2,0) -- (2,1);
        \end {scope}
    }

    %%% Rank 0 %%%

    \drawdualsnake

    \draw[blue, line width = 1.2] (0,0) -- (1,0) -- (1,1) -- (2,1) -- (2,2);

    \draw (-0.5,2.5) -- (-2.5,4.5);
    \draw (1.5,2.5) -- (3.5,4.5);

    %%% Rank 1 %%%

    \begin {scope}[shift={(-4,4)}]
        \drawdualsnake

        \draw[orange, decorate, line width = 1.2] (0,0) -- (1,0);
        \draw[orange, decorate, line width = 1.2] (1,0) -- (1,1);
        \draw[orange, decorate, line width = 1.2] (1,1) -- (0,1);
        \draw[orange, decorate, line width = 1.2] (0,0) -- (0,1);

        \draw[blue, line width = 1.2] (1,1) -- (2,1) -- (2,2);

        \draw (-0.5,2.5) -- (-2.5,4.5);
        \draw (1.5,2.5) -- (3.5,4.5);
    \end {scope}

    \begin {scope}[shift={(4,4)}]
        \drawdualsnake

        \draw[orange, decorate, line width = 1.2] (1,1) -- (2,1);
        \draw[orange, decorate, line width = 1.2] (2,1) -- (2,2);
        \draw[orange, decorate, line width = 1.2] (2,2) -- (1,2);
        \draw[orange, decorate, line width = 1.2] (1,2) -- (1,1);

        \draw[blue, line width = 1.2] (0,0) -- (1,0) -- (1,1);

        \draw (-0.5,2.5) -- (-2.5,4.5);
        \draw (1.5,2.5) -- (3.5,4.5);
    \end {scope}

    %%% Rank 2 %%%

    \begin {scope}[shift={(-8,8)}]
        \drawdualsnake

        \draw[blue, line width = 1.2] (0,0) -- (0,1) -- (2,1) -- (2,2);

        \draw (1.5,2.5) -- (3.5,4.5);
    \end {scope}

    \begin {scope}[shift={(0,8)}]
        \drawdualsnake

        \draw[orange, decorate, line width = 1.2] (0,0) -- (1,0);
        \draw[orange, decorate, line width = 1.2] (1,0) -- (1,1);
        \draw[orange, decorate, line width = 1.2] (1,1) -- (0,1);
        \draw[orange, decorate, line width = 1.2] (0,0) -- (0,1);

        \draw[orange, decorate, line width = 1.2] (1,1) -- (2,1);
        \draw[orange, decorate, line width = 1.2] (2,1) -- (2,2);
        \draw[orange, decorate, line width = 1.2] (2,2) -- (1,2);
        \draw[orange, decorate, line width = 1.2] (1,2) -- (1,1);

        \draw (0.5,2.5) -- (0.5,3.5);
        \draw (-0.5,2.5) -- (-2.5,4.5);
        \draw (1.5,2.5) -- (3.5,4.5);
    \end {scope}

    \begin {scope}[shift={(8,8)}]
        \drawdualsnake

        \draw[blue, line width = 1.2] (0,0) -- (1,0) -- (1,2) -- (2,2);

        \draw (-0.5,2.5) -- (-2.5,4.5);
    \end {scope}

    %%% Rank 3 %%%

    \begin {scope}[shift={(-4,12)}]
        \drawdualsnake

        \draw[orange, decorate, line width = 1.2] (1,1) -- (2,1);
        \draw[orange, decorate, line width = 1.2] (2,1) -- (2,2);
        \draw[orange, decorate, line width = 1.2] (2,2) -- (1,2);
        \draw[orange, decorate, line width = 1.2] (1,2) -- (1,1);

        \draw[blue, line width = 1.2] (0,0) -- (0,1) -- (1,1);

        \draw (0.5,2.5) -- (0.5,3.5);
        \draw (1.5,2.5) -- (3.5,4.5);
    \end {scope}

    \begin {scope}[shift={(0,12)}]
        \drawdualsnake

        \draw[orange, decorate, line width = 1.2] (0,0) -- (1,0);
        \draw[orange, decorate, line width = 1.2] (1,0) -- (1,1);
        \draw[orange, decorate, line width = 1.2] (1,1) -- (2,1);
        \draw[orange, decorate, line width = 1.2] (2,1) -- (2,2);
        \draw[orange, decorate, line width = 1.2] (2,2) -- (0,2);
        \draw[orange, decorate, line width = 1.2] (0,2) -- (0,0);

        \draw (-0.5,2.5) -- (-2.5,4.5);
        \draw (1.5,2.5) -- (3.5,4.5);
    \end {scope}

    \begin {scope}[shift={(4,12)}]
        \drawdualsnake

        \draw[orange, decorate, line width = 1.2] (0,0) -- (1,0);
        \draw[orange, decorate, line width = 1.2] (1,0) -- (1,1);
        \draw[orange, decorate, line width = 1.2] (1,1) -- (0,1);
        \draw[orange, decorate, line width = 1.2] (0,0) -- (0,1);

        \draw[blue, line width = 1.2] (1,1) -- (1,2) -- (2,2);

        \draw (-0.5,2.5) -- (-2.5,4.5);
        \draw (0.5,2.5) -- (0.5,3.5);
    \end {scope}

    %%% Rank 4 %%%

    \begin {scope}[shift={(-4,16)}]
        \drawdualsnake

        \draw[blue, line width = 1.2] (0,0) -- (0,1);

        \draw[orange, decorate, line width = 1.2] (0,1) -- (2,1);
        \draw[orange, decorate, line width = 1.2] (2,1) -- (2,2);
        \draw[orange, decorate, line width = 1.2] (2,2) -- (0,2);
        \draw[orange, decorate, line width = 1.2] (0,2) -- (0,1);

        \draw (1.5,2.5) -- (3.5,4.5);
    \end {scope}

    \begin {scope}[shift={(0,16)}]
        \drawdualsnake

        \draw[blue, line width = 1.2] (0,0) -- (0,1) -- (1,1) -- (1,2) -- (2,2);

        \draw (0.5,2.5) -- (0.5,3.5);
    \end {scope}

    \begin {scope}[shift={(4,16)}]
        \drawdualsnake

        \draw[blue, line width = 1.2] (1,2) -- (2,2);

        \draw[orange, decorate, line width = 1.2] (0,0) -- (1,0);
        \draw[orange, decorate, line width = 1.2] (1,0) -- (1,2);
        \draw[orange, decorate, line width = 1.2] (1,2) -- (0,2);
        \draw[orange, decorate, line width = 1.2] (0,2) -- (0,0);

        \draw (-0.5,2.5) -- (-2.5,4.5);
    \end {scope}

    %%% Rank 5 %%%

    \begin {scope}[shift={(0,20)}]
        \drawdualsnake

        \draw[blue, line width = 1.2] (0,0) -- (0,1);
        \draw[blue, line width = 1.2] (1,2) -- (2,2);

        \draw[orange, decorate, line width = 1.2] (0,1) -- (1,1);
        \draw[orange, decorate, line width = 1.2] (1,1) -- (1,2);
        \draw[orange, decorate, line width = 1.2] (1,2) -- (0,2);
        \draw[orange, decorate, line width = 1.2] (0,2) -- (0,1);

        \draw (0.5,2.5) -- (0.5,3.5);
    \end {scope}

    %%% Rank 6 %%%

    \begin {scope}[shift={(0,24)}]
        \drawdualsnake

        \draw[blue, line width = 1.2] (0,0) -- (0,2) -- (2,2);
    \end {scope}

    %%%%%% ---------- Order Ideals ---------- %%%%%%

    \begin {scope} [shift = {(22,0)}]
    %%% Rank 0 %%%

    \drawposet

    \draw (-0.5,2.5) -- (-2,4);
    \draw (1.5,2.5) -- (3,4);

    %%% Rank 1 %%%

    \begin {scope}[shift={(-4,4)}]
        \drawposet

        \draw[blue] (-0.5,0) circle (0.2);

        \draw (-0.5,2.5) -- (-2,4);
        \draw (1.5,2.5) -- (3,4);
    \end {scope}

    \begin {scope}[shift={(4,4)}]
        \drawposet

        \draw[blue] (1.5,0) circle (0.2);

        \draw (-0.5,2.5) -- (-2,4);
        \draw (1.5,2.5) -- (3,4);
    \end {scope}

    %%% Rank 2 %%%

    \begin {scope}[shift={(-8,8)}]
        \drawposet

        \draw[blue] (-0.5,0) circle (0.2);
        \draw[blue] (-0.5,1) circle (0.2);

        \draw (1.5,2.5) -- (3,4);
    \end {scope}

    \begin {scope}[shift={(0,8)}]
        \drawposet

        \draw[blue] (-0.5,0) circle (0.2);
        \draw[blue] (1.5,0) circle (0.2);

        \draw (-0.5,2.5) -- (-2,4);
        \draw (1.5,2.5) -- (3,4);
        \draw (0.5,2.5) -- (0.5,3.5);
    \end {scope}

    \begin {scope}[shift={(8,8)}]
        \drawposet

        \draw[blue] (1.5,0) circle (0.2);
        \draw[blue] (1.5,1) circle (0.2);

        \draw (-0.5,2.5) -- (-2,4);
    \end {scope}

    %%% Rank 3 %%%

    \begin {scope}[shift={(-4,12)}]
        \drawposet

        \draw[blue] (-0.5,0) circle (0.2);
        \draw[blue] (-0.5,1) circle (0.2);
        \draw[blue] (1.5,0) circle (0.2);

        \draw (0.5,2.5) -- (0.5,3.5);
        \draw (1.5,2.5) -- (3,4);
    \end {scope}

    \begin {scope}[shift={(0,12)}]
        \drawposet

        \draw[blue] (-0.5,0) circle (0.2);
        \draw[blue] (0.5,1) circle (0.2);
        \draw[blue] (1.5,0) circle (0.2);

        \draw (-0.5,2.5) -- (-2,4);
        \draw (1.5,2.5) -- (3,4);
    \end {scope}

    \begin {scope}[shift={(4,12)}]
        \drawposet

        \draw[blue] (-0.5,0) circle (0.2);
        \draw[blue] (1.5,1) circle (0.2);
        \draw[blue] (1.5,0) circle (0.2);

        \draw (-0.5,2.5) -- (-2,4);
        \draw (0.5,2.5) -- (0.5,3.5);
    \end {scope}

    %%% Rank 4 %%%

    \begin {scope}[shift={(-4,16)}]
        \drawposet

        \draw[blue] (-0.5,0) circle (0.2);
        \draw[blue] (-0.5,1) circle (0.2);
        \draw[blue] (0.5,1) circle (0.2);
        \draw[blue] (1.5,0) circle (0.2);

        \draw (1.5,2.5) -- (3,4);
    \end {scope}

    \begin {scope}[shift={(0,16)}]
        \drawposet

        \draw[blue] (-0.5,0) circle (0.2);
        \draw[blue] (-0.5,1) circle (0.2);
        \draw[blue] (1.5,1) circle (0.2);
        \draw[blue] (1.5,0) circle (0.2);

        \draw (0.5,2.5) -- (0.5,3.5);
    \end {scope}

    \begin {scope}[shift={(4,16)}]
        \drawposet
        \draw[blue] (-0.5,0) circle (0.2);
        \draw[blue] (0.5,1) circle (0.2);
        \draw[blue] (1.5,1) circle (0.2);
        \draw[blue] (1.5,0) circle (0.2);

        \draw (-0.5,2.5) -- (-2,4);
    \end {scope}

    %%% Rank 5 %%%

    \begin {scope}[shift={(0,20)}]
        \drawposet

        \draw[blue] (-0.5,0) circle (0.2);
        \draw[blue] (-0.5,1) circle (0.2);
        \draw[blue] (0.5,1) circle (0.2);
        \draw[blue] (1.5,1) circle (0.2);
        \draw[blue] (1.5,0) circle (0.2);

        \draw (0.5,2.5) -- (0.5,3.5);
    \end {scope}

    %%% Rank 6 %%%

    \begin {scope}[shift={(0,24)}]
        \drawposet

        \draw[blue] (-0.5,0) circle (0.2);
        \draw[blue] (-0.5,1) circle (0.2);
        \draw[blue] (0.5,1) circle (0.2);
        \draw[blue] (0.5,2) circle (0.2);
        \draw[blue] (1.5,1) circle (0.2);
        \draw[blue] (1.5,0) circle (0.2);
    \end {scope}
    \end {scope}

\end {tikzpicture}

    \caption {Poset isomorphism between $L(G)$ and $J(\Bbb{P}(G))$}
    \label {fig:poset_isomorphism}
    \end {figure}
\end {example}

\bigskip

\section{Examples}

\begin{example}

\input{example_snake_twisted.tex}

\end{example}

\begin{example} \label{ex:expansions}

Consider the following triangulation of a hexagon. Its snake graph and dual snake graph are also pictured.
We remind the reader of our convention of going bottom-to-top (i.e. the first tile of the snake graph corresponds
the bottom triangles in the hexagon).        

\begin {center}
\begin {tikzpicture}[scale=0.5, every node/.style={scale=0.5}]
      \path (0,0)
      node[
        regular polygon,
        regular polygon sides=6,
        draw,
        inner sep=1.6cm,
      ] (hexagon) {}
      ;

      \def\r{2.6};

      \coordinate (v6) at ($\r*(1,0)$);
      \coordinate (v1) at ($\r*(0.5,{sqrt(3)*0.5})$);
      \coordinate (v2) at ($\r*(-0.5,{sqrt(3)/2})$);
      \coordinate (v3) at ($\r*(-1,0)$);
      \coordinate (v4) at ($\r*(-0.5,{-sqrt(3)/2})$);
      \coordinate (v5) at ($\r*(0.5,{-sqrt(3)/2})$);

      \draw (v2) -- (v6) -- (v3) -- (v5);

      \draw ($0.5*(v6)+0.5*(v1) + (0.5,0)$)  node {\LARGE$x_1$};
      \draw ($0.5*(v1)+0.5*(v2) + (0,0.3)$)  node {\LARGE$x_2$};
      \draw ($0.5*(v2)+0.5*(v3) + (-0.5,0)$) node {\LARGE$x_3$};
      \draw ($0.5*(v3)+0.5*(v4) + (-0.5,0)$) node {\LARGE$x_4$};
      \draw ($0.5*(v4)+0.5*(v5) + (0,-0.5)$) node {\LARGE$x_5$};
      \draw ($0.5*(v6)+0.5*(v5) + (0.5,0)$)  node {\LARGE$x_6$};

      \draw ($0.5*(v6)+0.5*(v2) + (-0.5,-0.2)$) node {\LARGE$x_9$};
      \draw ($0.6*(v6)+0.4*(v3) + (0,-0.3)$)    node {\LARGE$x_8$};
      \draw ($0.5*(v5)+0.5*(v3) + (0,-0.3)$)    node {\LARGE$x_7$};

      \draw ($0.5*(v6)+0.5*(v2) + (0.5,0.4)$)   node {\LARGE$\theta_4$};
      \draw ($0.5*(v6)+0.5*(v2) + (-1.5,-0.2)$) node {\LARGE$\theta_3$};
      \draw ($0.6*(v6)+0.4*(v3) + (0.5,-1.0)$)  node {\LARGE$\theta_2$};
      \draw ($0.5*(v5)+0.5*(v3) + (-0.4,-0.7)$) node {\LARGE$\theta_1$};

      \begin {scope} [shift={(6,0)}]
          \draw (0,-1.5) -- ++(9,0) -- ++(0,3) -- ++(-9,0) -- cycle;
          \draw (3,-1.5) -- ++(0,3);     
          \draw (6,-1.5) -- ++(0,3); 

          \draw (0,0)      node[left]  {\LARGE $x_4$};
          \draw (1.5,-1.5) node[below] {\LARGE $x_5$};
          \draw (4.5,-1.5) node[below] {\LARGE $x_7$};
          \draw (7.5,-1.5) node[below] {\LARGE $x_8$};
          \draw (9,0)      node[right] {\LARGE $x_1$};
          \draw (1.5,1.5)  node[above] {\LARGE $x_8$};
          \draw (4.5,1.5)  node[above] {\LARGE $x_9$};
          \draw (7.5,1.5)  node[above] {\LARGE $x_2$};
          \draw (3,0)      node[left]  {\LARGE $x_6$};
          \draw (6,0)      node[right] {\LARGE $x_3$};

          \begin {scope}[shift={(0,-1.5)}]
              \draw (0.4,0.4) node {\large $\theta_1$};
              \draw (2.6,2.6) node {\large $\theta_2$};
              \draw (3.4,0.4) node {\large $\theta_2$};
              \draw (5.6,2.6) node {\large $\theta_3$};
              \draw (6.4,0.4) node {\large $\theta_3$};
              \draw (8.6,2.6) node {\large $\theta_4$};
          \end {scope}
      \end {scope}

      \begin {scope}[shift={(18,-3)}]
        \draw (0,0) -- ++(3,0) -- ++(0,3) -- ++(3,0) -- ++(0,3) -- ++(-6,0) -- cycle;
        \draw (0,3) -- ++(3,0) -- ++(0,3);

        \draw (0,1.5) node[left]  {\LARGE $x_4$};
        \draw (1.5,0) node[below] {\LARGE $x_5$};
        \draw (3,1.5) node[right] {\LARGE $x_8$};
        \draw (1.5,3) node[above] {\LARGE $x_6$};
        \draw (0,4.5) node[left]  {\LARGE $x_7$};
        \draw (1.5,6) node[above] {\LARGE $x_9$};
        \draw (3,4.5) node[right] {\LARGE $x_3$};
        \draw (4.5,3) node[below] {\LARGE $x_8$};
        \draw (4.5,6) node[above] {\LARGE $x_1$};
        \draw (6,4.5) node[right] {\LARGE $x_2$};

        \draw (0.4,0.4) node {\large $\theta_1$};
        \draw (2.6,2.6) node {\large $\theta_2$};
        \draw (0.4,3.4) node {\large $\theta_2$};
        \draw (2.6,5.6) node {\large $\theta_3$};
        \draw (3.4,3.4) node {\large $\theta_3$};
        \draw (5.6,5.6) node {\large $\theta_4$};
      \end {scope}
\end {tikzpicture}
\end {center}

The following figures show, for each non-ordinary $T$-path (i.e. those that include super steps), a comparison
of all the combinatorial models discussed in the present paper: super $T$-paths, twisted super $T$-paths, double dimer covers, and lattice paths.
\input{example_main_theorem.tex}
\end{example}

\section{Super Fibonacci Numbers from Annuli and Once-Punctured Tori}

In this section, we generalize our formulas to two special classes of surfaces, the annuli and the once-punctured tori, which gives rise to a sequence of 
Grassmann numbers which we call \emph{super Fibonacci numbers}.

We first look at the decorated super-Teichm\"{u}ller space of a once punctured torus, 
which has been systematically studied in \cite{mcshane}\footnote{The third author also studied this situation
in an undergraduate research project, which motivates much of the current section.\label{footnote:sylvester}}.
An ideal triangulation $\Delta$ of a once punctured torus creates two different triangles on the surface.
This can be seen by presenting a torus by a piece of its universal cover --- a rectangle, and letting the four `vertices' of the rectangle be the puncture. 
Then an ideal triangulation decomposes the surface into two triangles (one white, the other gray), using three arcs:

\begin{figure}[h!]
\centering
\begin{tikzpicture}[scale=0.6]
	\coordinate (a) at (0,0);
	\coordinate (b) at (0,5);
	\coordinate (c) at (8,5);
	\coordinate (d) at (8,0);

	\filldraw[color=gray]  (a)..controls(1,2.5)..(b)--cycle;
	\draw (a)..controls(1,2.5)..(b);
	
	\filldraw[color=gray] (b)..controls(2.7,4)..(c)--cycle;
	\draw (b)..controls(2.7,4)..(c);
	
	\filldraw[color=gray] (c)..controls(5,1.5)..(a)--(d);
	\draw(c)..controls(5,1.5)..(a);

	\draw[thick] (a)--(b)--(c)--(d)--cycle;
	
	\node ()[scale=0.5,draw,circle,fill=black] at (a) {};
	\node ()[scale=0.5,draw,circle,fill=black] at (b) {};
	\node ()[scale=0.5,draw,circle,fill=black] at (c) {};
	\node ()[scale=0.5,draw,circle,fill=black] at (d) {};
\end{tikzpicture}
\end{figure} 

The quadrilaterals where a flip is taking place can be realized by
taking $a=c$ and $b=d$ in \Cref{fig:super_ptolemy}. 
We also choose a spin structure such
that the edges around both triangles are oriented cyclically as in \Cref{fig:initial_spin}.
The super Ptolemy transformation in \Cref{eqn:super_ptolemy_lambda,eqn:super_ptolemy_mu_right,eqn:super_ptolemy_mu_left} 
now takes the following form (see \Cref{fig:initial_spin} for the edge labels).
\begin{align}
    ef      &= a^2+b^2+ab \, \sigma \theta \label{eq:super_lambda_torus} \\
    \sigma' &= {{b\sigma-a\theta}\over{a^2+b^2}}
    \quad \text{ and }\quad
    \theta' = {{b\theta+a\sigma}\over{a^2+b^2}} \label{eq:muinvars_torus}
\end{align}

\begin{figure}[h]

\centering

\begin{tikzpicture}[scale=0.7, baseline, thick,every node/.style={sloped,allow upside down}]

\draw (0,0)--(3,0)--(60:3)--cycle;

\draw (0,0)--(3,0)--(-60:3)--cycle;

\draw (0,0)--node {\midarrow} (3,0);
\draw (3,0)--node {\midarrow} (60:3);
\draw (60:3)--node {\midarrow} (0,0);

\draw (3,0)--node {\midarrow} (-60:3);
\draw (-60:3)--node {\midarrow} (0,0);

\draw node[above] at (70:1.5){$a$};
\draw node[above] at (30:2.8){$b$};
\draw node[below] at (-30:2.8){$a$};
\draw node[below] at (-70:1.5){$b$};
\draw node[above] at (1,-0.12){$e$};

\draw node at (1.5,1){$\theta$};

\draw node at (1.5,-1){$\sigma$};

\end{tikzpicture}
\begin{tikzpicture}[baseline]

\draw[->, thick](0,0)--(1,0);

\node[above]  at (0.5,0) {};

\end{tikzpicture}
\begin{tikzpicture}[scale=0.7, baseline, thick,every node/.style={sloped,allow upside down}]
\draw (0,0)--(60:3)--(-60:3)--cycle;
\draw (3,0)--(60:3)--(-60:3)--cycle;

\draw (1.5,-2) --node {\midarrow} (1.5,2);

\draw (60:3)--node {\midarrow} (3,0);
\draw (60:3)--node {\midarrow} (0,0);

\draw (3,0)--node {\midarrow} (-60:3);
\draw (0,0)--node {\midarrow} (-60:3);

\draw node[above] at (70:1.5){$a$};
\draw node[above] at (30:2.8){$b$};
\draw node[below] at (-30:2.8){$a$};
\draw node[below] at (-70:1.5){$b$};
\draw node[left] at (1.7,1){$f$};

\draw node at (0.8,0){$\theta'$};

\draw node at (2.2,0){$\sigma'$};

\end{tikzpicture}
\caption{Cyclic orientation is preserved under flips.}
\label{fig:initial_spin}
\end{figure}
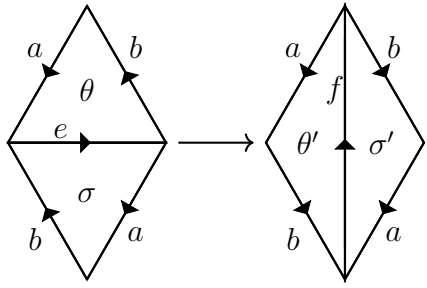

Notice that the ideal arcs surrounding both $\theta$ and $\theta'$ are counterclockwise, and those surrounding both $\sigma$ and $\sigma'$ are clockwise.
In other words, we obtain the same oriented triangulation after the flip as before.
This means flipping any of the three arcs will always result in the same relations \ref{eq:super_lambda_torus} and \ref{eq:muinvars_torus},
and will always result in the same oriented triangulation. \Cref{fig:other_flips} illustrates the result of
flipping the other two edges ($a$ and $b$).

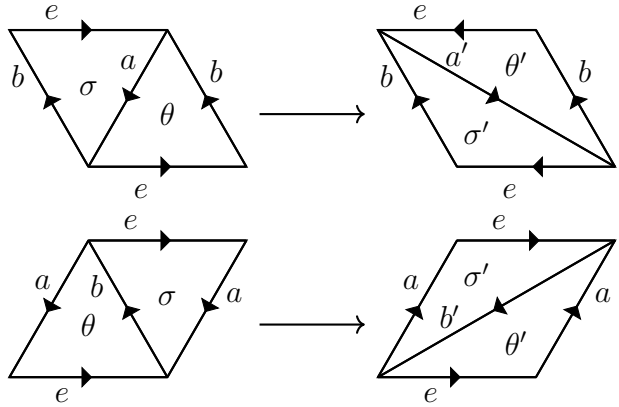
\begin{figure}[h]
\centering
\begin{tikzpicture}[scale=0.7, baseline, thick,every node/.style={sloped,allow upside down}]

\draw (0,0)--(3,0)--(60:3)--(120:3)--cycle;
\draw (0,0)--(60:3);

\draw (0,0)--node {\midarrow} (3,0);
\draw (3,0)--node {\midarrow} (60:3);
\draw (60:3)--node {\midarrow} (0,0);
\draw (120:3)--node {\midarrow} (60:3);
\draw (0,0)--node {\midarrow} (120:3);

\draw node[above] at (65:1.8){$a$};
\draw node[above] at (30:2.8){$b$};
\draw node[below] at (1,-0.12){$e$};
\draw node[above] at (-0.7,2.6){$e$};
\draw node[left] at (120:2){$b$};

\draw node at (1.5,1){$\theta$};
\draw node at (0,1.5){$\sigma$};

\draw[->, thick](3.25,1)--(5.25,1);

\begin {scope}[shift={(7,0)}]
    \draw (0,0)--(3,0)--(60:3)--(120:3)--cycle;
    \draw (120:3) -- (3,0);

    \draw (3,0)--node {\midarrow} (0,0);
    \draw (3,0)--node {\midarrow} (60:3);
    \draw (120:3)--node {\midarrow} (3,0);
    \draw (60:3)--node {\midarrow} (120:3);
    \draw (0,0)--node {\midarrow} (120:3);

    \draw node[above] at (0,1.7){$a'$};
    \draw node[above] at (30:2.8){$b$};
    \draw node[below] at (1,-0.12){$e$};
    \draw node[above] at (-0.7,2.6){$e$};
    \draw node[left] at (120:2){$b$};

    \draw node at (60:2.25){$\theta'$};
    \draw node at (60:0.75){$\sigma'$};
\end {scope}

\begin {scope}[shift={(1.5,-4)}]
    \draw (0,0)--(-3,0)--(120:3)--(60:3)--cycle;
    \draw (0,0)--(120:3);

    \draw (-3,0)--node {\midarrow} (0,0);
    \draw (0,0)--node {\midarrow} (120:3);
    \draw (120:3)--node {\midarrow} (-3,0);
    \draw (120:3)--node {\midarrow} (60:3);
    \draw (60:3)--node {\midarrow} (0,0);

    \draw node[right] at (60:1.8){$a$};
    \draw node[left] at (-2,1.8){$a$};
    \draw node[below] at (-2,0){$e$};
    \draw node[above] at (-0.7,2.6){$e$};
    \draw node[left] at (120:2){$b$};

    \draw node at (-1.5,1){$\theta$};
    \draw node at (0,1.5){$\sigma$};
\end {scope}

\draw[->, thick](3.25,-3)--(5.25,-3);

\begin {scope}[shift={(8.5,-4)}]
    \draw (0,0)--(-3,0)--(120:3)--(60:3)--cycle;
    \draw (-3,0)--(60:3);

    \draw (-3,0)--node {\midarrow} (0,0);
    \draw (60:3)--node {\midarrow} (-3,0);
    \draw (-3,0)--node {\midarrow} (120:3);
    \draw (120:3)--node {\midarrow} (60:3);
    \draw (0,0)--node {\midarrow} (60:3);

    \draw node[right] at (60:1.8){$a$};
    \draw node[left] at (-2,1.8){$a$};
    \draw node[below] at (-2,0){$e$};
    \draw node[above] at (-0.7,2.6){$e$};
    \draw node        at (145:2){$b'$};

    \draw node at (120:0.75){$\theta'$};
    \draw node at (120:2.25){$\sigma'$};
\end {scope}

\end{tikzpicture}
\caption{Effect of flipping edges $a$ and $b$}
\label{fig:other_flips}
\end{figure}

Since any sequence of flips results in the same oriented triangulation, then for any edge, $\sigma'$ will always be on the right and $\theta'$ on the left.
Since $\sigma \theta = \sigma' \theta'$, the Ptolemy relation will always take the form
\begin{equation} \label{eq:super_markov_mutation}
    ef = a^2 + b^2 + ab \, \varepsilon
\end{equation}

where $\varepsilon = \sigma \theta$.  We can therefore forget the original $\mu$-invariants and just use the variable $\varepsilon$.

In the classical (non-super) case, applying the Ptolemy transformation on two of the edges alternately gives rises to odd-indexed Fibonacci numbers. 
In the same spirit, starting with one of the four oriented triangulations of the once punctured torus illustrated in \Cref{fig:other_flips}, 
setting $a = Z_1$, $b= Z_2$, $e = 1$, and flipping the edges $a$ and $b$ alternatively
will give us a sequence of elements of the super algebra, $\{Z_m\}$ satisfying the recurrence relation:
\begin{equation} \label{eqn:super_sequence}
    Z_{m} Z_{m-2} = Z_{m-1}^2 + Z_{m-1} \varepsilon + 1
\end{equation}
for $m \geq 3$.

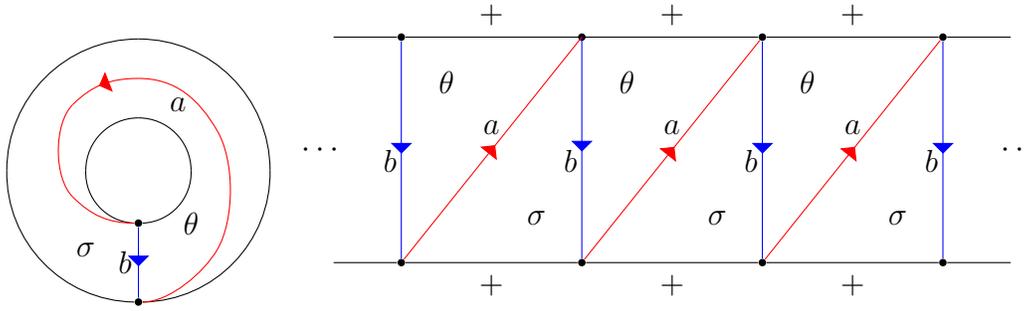
\begin{figure}
	\begin{tikzpicture}[scale=0.7,every node/.style={sloped,allow upside down}]
	
		\node [] (0) at (0, 1) {};
		\node [circle,fill=black,inner sep=1pt] (1) at (0, -1) {};
		\node [] (2) at (0, 2.5) {};
		\node [] (4) at (-1, 0) {};
		\node [] (5) at (1, 0) {};
		\node [ ] (6) at (2.5, 0) {};
		\node [ ] (7) at (-2.5, 0) {};
		\node [ ] (8) at (1.5, -1.5) {};
		\node [ ] (9) at (1.5, 0.75) {};
		\node [ ] (10) at (0, 1.75) {};
		\node [ ] (11) at (-1.25, 1.25) {};
		\node [ ] (13) at (-1.25, -0.5) {};
		\node [ ] (14) at (-0.25, -1.75) {$b$};
		\node [circle,fill=black,inner sep=1pt] (17) at (0, -2.5) {};
		\node [ ] (18) at (0.75, 1.25) {$a$};
		
		\node () at (-1,-1.5) {$\sigma$};
		\node () at (1,-1) {$\theta$};

		\draw [bend left=45] (4.center) to (0.center);
		\draw [bend left=45] (0.center) to (5.center);
		\draw [bend left=45] (5.center) to (1);
		\draw [bend right=45] (4.center) to (1);
		\draw [bend left=45] (7.center) to (2.center);
		\draw [bend left=45] (2.center) to (6.center);
		\draw [color=red, in=-60, out=60, looseness=0.75] (8.center) to (9.center);
		\draw [style=red, in=0, out=120] (9.center) to (10.center);
		\draw [style=red, in=45, out=180] (10.center) to node {\midarrow} (11.center);
		\draw [style=red, in=-180, out=-45] (13.center) to (1);
		\draw [color=red, in=135, out=-135, looseness=0.75] (11.center) to (13.center);
		\draw [bend right=45] (7.center) to (17);
		\draw [bend right=45] (17) to (6.center);
		\draw [color=red, in=-120, out=0, looseness=0.75] (17) to (8.center);
		\draw [color=blue] (1) -- node {\midarrow} (17);
	
\end{tikzpicture}
\begin{tikzpicture}[scale=0.6,every node/.style={sloped,allow upside down}]

		\node [circle,fill=black,inner sep=1pt] (0) at (-6, 3) {};
		\node [circle,fill=black,inner sep=1pt] (1) at (-6, -2) {};
		\node [circle,fill=black,inner sep=1pt] (2) at (-2, 3) {};
		\node [circle,fill=black,inner sep=1pt] (3) at (-2, -2) {};
		\node [circle,fill=black,inner sep=1pt] (4) at (2, 3) {};
		\node [circle,fill=black,inner sep=1pt] (5) at (2, -2) {};
		\node [circle,fill=black,inner sep=1pt] (6) at (6, 3) {};
		\node [circle,fill=black,inner sep=1pt] (7) at (6, -2) {};
		\node [] (8) at (7.5, 3) {};
		\node [] (9) at (7.5, -2) {};
		\node [] (10) at (-7.5, 3) {};
		\node [] (11) at (-7.5, -2) {};
		\node [] (12) at (-2, 3) {};
		\node [] (13) at (-5, 2) {$\theta$};
		\node [] (14) at (-3, -1) {$\sigma$};
		\node [] (15) at (-1, 2) {$\theta$};
		\node [] (16) at (1, -1) {$\sigma$};
		\node [] (17) at (3, 2) {$\theta$};
		\node [] (18) at (5, -1) {$\sigma$};
		\node [] (19) at (-7.75, 0.5) {$\cdots$};
		\node [] (20) at (7.75, 0.5) {$\cdots$};
		\node [] (21) at (-6.25, 0.25) {$b$};
		\node [] (22) at (-4, 1) {$a$};
		\node [] (23) at (-2.25, 0.25) {$b$};
		\node [] (24) at (0, 1) {$a$};
		\node [] (25) at (1.75, 0.25) {$b$};
		\node [] (26) at (4, 1) {$a$};
		\node [] (27) at (5.75, 0.25) {$b$};
		\node [] (28) at (-4, 3.5) {$+$};
		\node [] (29) at (0, 3.5) {$+$};
		\node [] (30) at (4, 3.5) {$+$};
		\node [] (31) at (-4, -2.5) {$+$};
		\node [] (32) at (0, -2.5) {$+$};
		\node [] (33) at (4, -2.5) {$+$};

		\draw (0) to (12.center);
		\draw (4) to (12.center);
		\draw (4) to (6);
		\draw (3) to (1);
		\draw (3) to (5);
		\draw (5) to (7);
		\draw [style=blue] (0) to node {\midarrow}  (1);
		\draw [style=blue] (12.center) to node {\midarrow} (3);
		\draw [style=blue] (4) to node {\midarrow} (5);
		\draw [style=blue] (6) to node {\midarrow} (7);
		\draw [style=red] (1) to node {\midarrow} (12.center);
		\draw [style=red] (3) to node {\midarrow} (4);
		\draw [style=red] (5) to node {\midarrow} (6);
		\draw (6) to (8.center);
		\draw (7) to (9.center);
		\draw (10.center) to (0);
		\draw (11.center) to (1);

\end{tikzpicture}

\caption{Triangulation of a marked annulus and its universal cover.}
\label{fig:annulus}
\end{figure}

One can derive \cref{eqn:super_sequence} in an alternative way, using the decorated super-Teichm\"uller space of an annulus.
Consider an annulus with two marked points, one on each boundary component, with orientation as depicted in \Cref{fig:annulus}. 
Similar to the torus case, if we set the $\lambda$-lengths of the boundary arcs to be $1$ and set $a=Z_1$ and $b=Z_2$, 
then flipping the edges $a$ and $b$ alternately gives the same sequence $\{Z_m\}$ satisfying the same recurrence \cref{eqn:super_sequence}.

Similar to the case of the once-punctured torus, the relative spin structure is preserved in both orders: either flipping $a$ then $b$ or flipping $b$ first. However, the two orders work in a slightly different way. Flipping $a$ first would only alter the boundary orientations and leave the interior ones unchanged, while flipping $b$ first does change the orientations on the interior edges. In particular, see how the spin structure is affected by flips in Figures \ref{fig:annulus_flips1} and \ref{fig:annulus_flips2}.

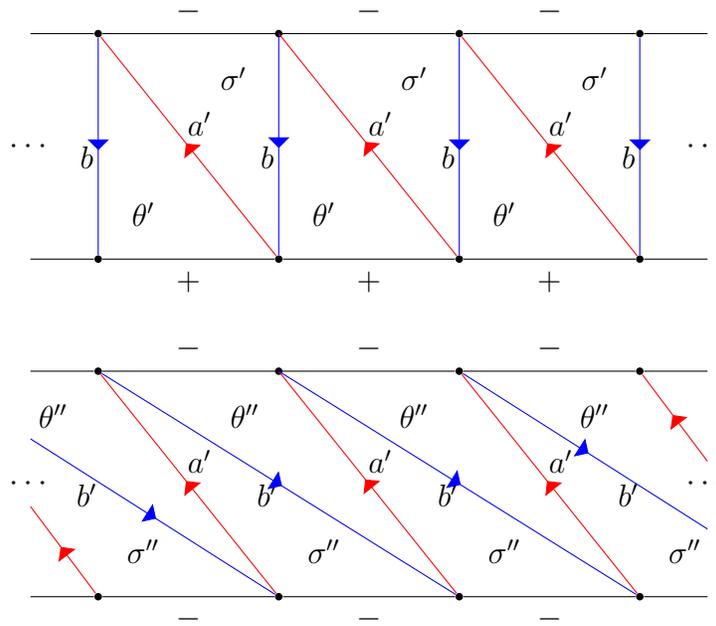
\begin{figure}

\begin{tikzpicture}[scale=0.6, every node/.style={sloped,allow upside down}]
		\node [circle,fill=black,inner sep=1pt] (0) at (-6, 3) {};
		\node [circle,fill=black,inner sep=1pt] (1) at (-6, -2) {};
		\node [circle,fill=black,inner sep=1pt] (2) at (-2, 3) {};
		\node [circle,fill=black,inner sep=1pt] (3) at (-2, -2) {};
		\node [circle,fill=black,inner sep=1pt] (4) at (2, 3) {};
		\node [circle,fill=black,inner sep=1pt] (5) at (2, -2) {};
		\node [circle,fill=black,inner sep=1pt] (6) at (6, 3) {};
		\node [circle,fill=black,inner sep=1pt] (7) at (6, -2) {};
		\node [] (8) at (7.5, 3) {};
		\node [] (9) at (7.5, -2) {};
		\node [] (10) at (-7.5, 3) {};
		\node [] (11) at (-7.5, -2) {};
		\node [] (12) at (-2, 3) {};
		\node [] (13) at (-5, -1) {$\theta'$};
		\node [] (14) at (-3, 2) {$\sigma'$};
		\node [] (15) at (-1, -1) {$\theta'$};
		\node [] (16) at (1, 2) {$\sigma'$};
		\node [] (17) at (3, -1) {$\theta'$};
		\node [] (18) at (5, 2) {$\sigma'$};
		\node [] (19) at (-7.5, 0.5) {$\cdots$};
		\node [] (20) at (7.5, 0.5) {$\cdots$};
		\node [] (21) at (-6.25, 0.25) {$b$};
		\node [] (22) at (-3.75, 1) {$a'$};
		\node [] (23) at (-2.25, 0.25) {$b$};
		\node [] (24) at (0.25, 1) {$a'$};
		\node [] (25) at (1.75, 0.25) {$b$};
		\node [] (26) at (4.25, 1) {$a'$};
		\node [] (27) at (5.75, 0.25) {$b$};
		\node [] (28) at (-4, 3.5) {$-$};
		\node [] (29) at (0, 3.5) {$-$};
		\node [] (30) at (4, 3.5) {$-$};
		\node [] (31) at (-4, -2.5) {$+$};
		\node [] (32) at (0, -2.5) {$+$};
		\node [] (33) at (4, -2.5) {$+$};
		\draw (0) to (12.center);
		\draw (4) to (12.center);
		\draw (4) to (6);
		\draw (3) to (1);
		\draw (3) to (5);
		\draw (5) to (7);
		\draw [style=blue] (0) to node {\midarrow}(1);
		\draw [style=blue] (12.center) to node {\midarrow}(3);
		\draw [style=blue] (4) to node {\midarrow}(5);
		\draw [style=blue] (6) to node {\midarrow}(7);
		\draw (6) to (8.center);
		\draw (7) to (9.center);
		\draw (10.center) to (0);
		\draw (11.center) to (1);
		\draw [style=red] (3) to node {\midarrow}(0);
		\draw [style=red] (5) to node {\midarrow}(12.center);
		\draw [style=red] (7) to node {\midarrow}(4);
\end{tikzpicture}

\vspace{0.3cm}

\begin{tikzpicture}[scale=0.6,every node/.style={sloped,allow upside down}]

		\node [circle,fill=black,inner sep=1pt] (0) at (-6, 3) {};
		\node [circle,fill=black,inner sep=1pt] (1) at (-6, -2) {};
		\node [circle,fill=black,inner sep=1pt] (2) at (-2, 3) {};
		\node [circle,fill=black,inner sep=1pt] (3) at (-2, -2) {};
		\node [circle,fill=black,inner sep=1pt] (4) at (2, 3) {};
		\node [circle,fill=black,inner sep=1pt] (5) at (2, -2) {};
		\node [circle,fill=black,inner sep=1pt] (6) at (6, 3) {};
		\node [circle,fill=black,inner sep=1pt] (7) at (6, -2) {};
		\node [] (8) at (7.5, 3) {};
		\node [] (9) at (7.5, -2) {};
		\node [] (10) at (-7.5, 3) {};
		\node [] (11) at (-7.5, -2) {};
		\node [] (12) at (-2, 3) {};
		\node [] (13) at (-7, 2) {$\theta''$};
		\node [] (14) at (-1, -1) {$\sigma''$};
		\node [] (15) at (-2.75, 2) {$\theta''$};
		\node [] (16) at (3, -1) {$\sigma''$};
		\node [] (17) at (1, 2) {$\theta''$};
		\node [] (18) at (7, -1) {$\sigma''$};
		\node [] (19) at (-7.5, 0.5) {$\cdots$};
		\node [] (20) at (7.5, 0.5) {$\cdots$};
		\node [] (21) at (-6.25, 0.25) {$b'$};
		\node [] (22) at (-3.75, 1) {$a'$};
		\node [] (23) at (-2.25, 0.25) {$b'$};
		\node [] (24) at (0.25, 1) {$a'$};
		\node [] (25) at (1.75, 0.25) {$b'$};
		\node [] (26) at (4.25, 1) {$a'$};
		\node [] (27) at (5.75, 0.25) {$b'$};
		\node [] (28) at (-4, 3.5) {$-$};
		\node [] (29) at (0, 3.5) {$-$};
		\node [] (30) at (4, 3.5) {$-$};
		\node [] (31) at (-4, -2.5) {$-$};
		\node [] (32) at (0, -2.5) {$-$};
		\node [] (33) at (4, -2.5) {$-$};
		\node [] (35) at (7.5, -0.5) {};
		\node [] (36) at (-7.5, 1.5) {};
		\node [] (37) at (7.5, 1) {};
		\node [] (38) at (-7.5, 0) {};
		\node [] (39) at (5, 2) {$\theta''$};
		\node [] (40) at (-5, -1) {$\sigma''$};

		\draw (0) to (12.center);
		\draw (4) to (12.center);
		\draw (4) to (6);
		\draw (3) to (1);
		\draw (3) to (5);
		\draw (5) to (7);
		\draw (6) to (8.center);
		\draw (7) to (9.center);
		\draw (10.center) to (0);
		\draw (11.center) to (1);
		\draw [style=red] (3) to node {\midarrow} (0);
		\draw [style=red] (5) to node {\midarrow}(12.center);
		\draw [style=red] (7) to node {\midarrow}(4);
		\draw [style=blue] (0) to node {\midarrow}(5);
		\draw [style=blue] (12.center) to node {\midarrow}(7);
		\draw [style=blue] (4) to node {\midarrow}(35.center);
		\draw [style=blue] (36.center) to node {\midarrow}(3);
		\draw [style=red] (37.center) to node {\midarrow}(6);
		\draw [style=red] (1) to node {\midarrow}(38.center);

\end{tikzpicture}

\caption{Flipping $a$ (Top) then $b$ (Bottom) of a marked annulus, labeled as above, only changes orientation on the boundaries.}
\label{fig:annulus_flips1}
\end{figure}

\begin{figure}

	\begin{tikzpicture}[scale=0.6,every node/.style={sloped,allow upside down}]

		\node [circle,fill=black,inner sep=1pt] (0) at (-6, 3) {};
		\node [circle,fill=black,inner sep=1pt] (1) at (-6, -2) {};
		\node [circle,fill=black,inner sep=1pt] (2) at (-2, 3) {};
		\node [circle,fill=black,inner sep=1pt] (3) at (-2, -2) {};
		\node [circle,fill=black,inner sep=1pt] (4) at (2, 3) {};
		\node [circle,fill=black,inner sep=1pt] (5) at (2, -2) {};
		\node [circle,fill=black,inner sep=1pt] (6) at (6, 3) {};
		\node [circle,fill=black,inner sep=1pt] (7) at (6, -2) {};
		\node [] (8) at (7.5, 3) {};
		\node [] (9) at (7.5, -2) {};
		\node [] (10) at (-7.5, 3) {};
		\node [] (11) at (-7.5, -2) {};
		\node [] (12) at (-2, 3) {};
		\node [] (13) at (-5, 2) {$\theta'$};
		\node [] (14) at (-3, -1) {$\sigma'$};
		\node [] (15) at (-1, 2) {$\theta'$};
		\node [] (16) at (1, -1) {$\sigma'$};
		\node [] (17) at (3, 2) {$\theta'$};
		\node [] (18) at (5, -1) {$\sigma'$};
		\node [] (19) at (-7.5, 0.5) {$\cdots$};
		\node [] (20) at (7.5, 0.5) {$\cdots$};
		\node [] (21) at (-6, 1) {$b'$};
		\node [] (22) at (-4, 1) {$a$};
		\node [] (23) at (-2, 1) {$b'$};
		\node [] (24) at (0, 1) {$a$};
		\node [] (25) at (2, 1) {$b'$};
		\node [] (26) at (4, 1) {$a$};
		\node [] (27) at (6, 1) {$b'$};
		\node [] (28) at (-4, 3.5) {$+$};
		\node [] (29) at (0, 3.5) {$+$};
		\node [] (30) at (4, 3.5) {$+$};
		\node [] (31) at (-4, -2.5) {$+$};
		\node [] (32) at (0, -2.5) {$+$};
		\node [] (33) at (4, -2.5) {$+$};
		\node [] (34) at (-7.5, -0.5) {};
		\node [] (35) at (7.5, 1.5) {};
		\node [] (36) at (-7, -1) {$\sigma'$};
		\node [] (37) at (7, 2) {$\theta'$};
		\node [] (39) at (7.5, 0) {};
		\node [] (40) at (-7.5, 1) {};

		\draw (0) to (12.center);
		\draw (4) to (12.center);
		\draw (4) to (6);
		\draw (3) to (1);
		\draw (3) to (5);
		\draw (5) to (7);
		\draw [style=red] (1) to node {\midrevarrow} (12.center);
		\draw [style=red] (3) to node {\midrevarrow} (4);
		\draw [style=red] (5) to node {\midrevarrow} (6);
		\draw (6) to (8.center);
		\draw (7) to (9.center);
		\draw (10.center) to (0);
		\draw (11.center) to (1);
		\draw [style=blue] (34.center) to node {\midarrow}  (12.center);
		\draw [style=blue] (1) to node {\midarrow} (4);
		\draw [style=blue] (3) to node {\midarrow} (6);
		\draw [style=blue] (5) to node {\midarrow} (35.center);
		\draw [style=red] (7) to  (39.center);
		\draw [style=red]  (40.center) to (0);

\end{tikzpicture}

\vspace{0.2cm}

\begin{tikzpicture}[scale=0.6,every node/.style={sloped,allow upside down}]

		\node [circle,fill=black,inner sep=1pt] (0) at (-6, 3) {};
		\node [circle,fill=black,inner sep=1pt] (1) at (-6, -2) {};
		\node [circle,fill=black,inner sep=1pt] (2) at (-2, 3) {};
		\node [circle,fill=black,inner sep=1pt] (3) at (-2, -2) {};
		\node [circle,fill=black,inner sep=1pt] (4) at (2, 3) {};
		\node [circle,fill=black,inner sep=1pt] (5) at (2, -2) {};
		\node [circle,fill=black,inner sep=1pt] (6) at (6, 3) {};
		\node [circle,fill=black,inner sep=1pt] (7) at (6, -2) {};
		\node [] (8) at (7.5, 3) {};
		\node [] (9) at (7.5, -2) {};
		\node [] (10) at (-7.5, 3) {};
		\node [] (11) at (-7.5, -2) {};
		\node [] (12) at (-2, 3) {};
		\node [] (13) at (-5, 2.5) {$\theta''$};
		\node [] (14) at (-2, -1) {$\sigma''$};
		\node [] (15) at (-1, 2.5) {$\theta''$};
		\node [] (16) at (2, -1) {$\sigma''$};
		\node [] (17) at (3, 2.5) {$\theta''$};
		\node [] (18) at (6, -1) {$\sigma''$};
		\node [] (19) at (-7.5, 0.5) {$\cdots$};
		\node [] (20) at (7.5, 0.5) {$\cdots$};
		\node [] (21) at (-6.75, 0.25) {$b'$};
		\node [] (22) at (-2.5, 1.5) {$a'$};
		\node [] (23) at (-3, 0.25) {$b'$};
		\node [] (24) at (1.5, 1.5) {$a'$};
		\node [] (25) at (1, 0.25) {$b'$};
		\node [] (26) at (5.5, 1.5) {$a'$};
		\node [] (27) at (5, 0.25) {$b'$};
		\node [] (28) at (-4, 3.5) {$+$};
		\node [] (29) at (0, 3.5) {$+$};
		\node [] (30) at (4, 3.5) {$+$};
		\node [] (31) at (-4, -2.5) {$+$};
		\node [] (32) at (0, -2.5) {$+$};
		\node [] (33) at (4, -2.5) {$+$};
		\node [] (34) at (-7.5, -0.5) {};
		\node [] (35) at (7.5, 1.5) {};
		\node [] (36) at (-6, -1) {$\sigma''$};
		\node [] (37) at (7, 2.5) {$\theta''$};
		\node [] (39) at (7.5, -1) {};
		\node [] (40) at (-7.5, 1) {};
		\node [] (41) at (-7.5, -1) {};
		\node [] (42) at (7.5, 2) {};
		\node [] (43) at (7.5, 0.25) {};
		\node [] (44) at (-7.5, 0.75) {};
		\node [] (45) at (-6.5, 1.5) {$a'$};
		\node [] (46) at (-7.5, 2) {};

		\draw (0) to (12.center);
		\draw (4) to (12.center);
		\draw (4) to (6);
		\draw (3) to (1);
		\draw (3) to (5);
		\draw (5) to (7);
		\draw (6) to (8.center);
		\draw (7) to (9.center);
		\draw (10.center) to (0);
		\draw (11.center) to (1);
		\draw [style=blue] (34.center) to node {\midrevarrow} (12.center);
		\draw [style=blue] (1) to node {\midrevarrow} (4);
		\draw [style=blue] (3) to node {\midrevarrow} (6);
		\draw [style=blue] (5) to node {\midrevarrow} (35.center);
		\draw [style=red] (41.center) to node {\midarrow} (4);
		\draw [style=red] (1) to node {\midarrow} (6);
		\draw [style=red] (3) to node {\midarrow} (42.center);
		\draw [style=red] (5) to (43.center);
		\draw [style=blue] (7) to (39.center);
		\draw [style=red] (44.center) to (12.center);
		\draw [style=blue] (46.center) to (0);
\end{tikzpicture}

\caption{On the other hand, flipping $b$ (Top) then $a$ (Bottom) of a marked annulus, labeled as above, changes orientation of internal arcs.}
\label{fig:annulus_flips2}
\end{figure}
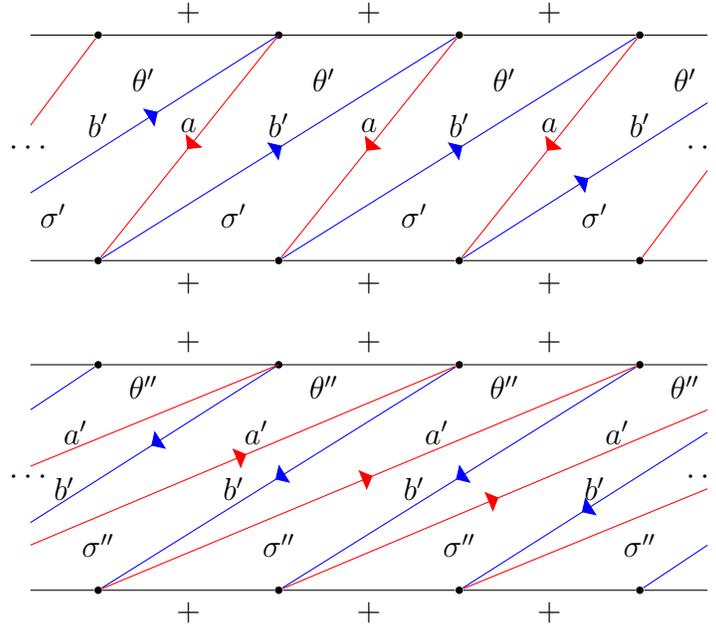

For the remainder of this section, we will give an explicit solution of recurrence (\ref{eqn:super_sequence}) in the special case that $Z_1=a=1$ and $Z_2=b=1$.
The solution is given by the double dimer partition functions of snake graphs as discussed earlier.
Although we have only defined snake graphs for arcs in a polygon, the construction of \Cref{def:Ggamma}
can easily be generalized to arcs in surfaces with nontrivial topology as in \cite{ms10}. The arcs obtained
from the flip sequence described above yield snake graphs with words $W(G) = R^{2n+1} = RR \cdots R$. In other words,
the resulting snake graphs are horizontal rows of tiles (with an odd number of tiles). Since the arcs cross the same
triangles multiple times, the $\mu$-invariants in the corners of the tiles will repeat.  With this construction in mind, we define the following.

\begin{definition} \label{def:G_m}
Let $G_m$ be the snake graph with $m$ tiles in a horizontal row, 
where all edges have weight $1$, and all tiles have the same two $\mu$-invariants $\sigma$ and $\theta$:

\begin {center}
\begin {tikzpicture}
    \draw (0,0) -- (3,0) -- (3,1) -- (0,1) -- cycle;
    \draw (1,0) -- (1,1);
    \draw (2,0) -- (2,1);

    \draw (3.5,0.5) node {$\cdots$};

    \draw (0.2,0.2) node {\tiny $\sigma$};
    \draw (0.8,0.8) node {\tiny $\theta$};
    \draw (1.2,0.2) node {\tiny $\theta$};
    \draw (1.8,0.8) node {\tiny $\sigma$};
    \draw (2.2,0.2) node {\tiny $\sigma$};
    \draw (2.8,0.8) node {\tiny $\theta$};
\end {tikzpicture}
\end {center}
\end{definition}

\begin {remark} \label{rem:snake}
    This snake graph $G_m$ from \Cref{def:G_m} can also be obtained from a triangulated polygon
    by specializing some of the variables. Specifically, take a polygon with a ``\emph{zig-zag}'' triangulation (see e.g. \Cref{ex:expansions} for the $m=3$ case).
    Then the construction from \Cref{def:Ggamma} will give a snake graph with the word $W(G) = RR \cdots R$,
    and the snake graph will be a horizontal row of boxes. Finally, setting $\theta_1 = \theta_3 = \cdots = \theta_{2n-1}$
    and $\theta_2 = \theta_4 = \cdots = \theta_{2n}$ will give the correct $\mu$-invariants.
\end {remark}

Let $x_m$ be the number of double dimer covers of $G_m$ which do not include cycles. These are in bijection with ordinary (i.e. not double) dimer covers.
Hence the sequence of $x_m$'s coincide with every-other Fibonacci number (see e.g. Section 2 of \cite{MusPropp}).  More precisely, if we index the Fibonacci numbers $F_n$ by 
$F_1 = F_2 = 1$ and $F_n = F_{n-1} + F_{n-2}$ for $n\geq 3$, then $x_m = F_{m+2}$ for $m\geq 1$.  Let $y_m$ be the number of double dimer covers of $G_m$ which include a single cycle of odd length. 
Finally, we define $p_m = x_m + y_m \varepsilon$, where $\varepsilon = \sigma \theta$. We will denote the odd-indexed $p_m$'s by
$z_n = p_{2n-5}$ and denote the even-indexed $p_m$'s by $w_n = p_{2n-4}$.  See Remark \ref{rem:w_n} below.
}

\begin{remark} \label{rem:y_m}
Note that $y_m$ is not defined as the total number of double dimer covers containing a cycle, but only those with a single 
cycle of odd length. Because every tile of $G_m$ contains the same $\mu$-invariants $\sigma$ and $\theta$, a cycle of even length 
would contain a factor $\sigma^2$ or $\theta^2$, and so such a double dimer cover would have weight zero. Similarly, 
a double dimer cover with more than one cycle would also contain multiple factors of $\sigma \theta$. So indeed $p_m = x_m + y_m \varepsilon$ 
is the double dimer partition function for $G_m$.
\end{remark}

\bigskip

Recall that the super $\lambda$-lengths in an annulus (for the flip sequence described earlier) are denoted by the $Z_m$'s.  

\begin {prop} \label{prop:annulus_lambda_lengths}
The super algebra elements $Z_m$'s defined above agree with the $z_m$'s, which, as explained in Remark \ref{rem:y_m}, are the double dimer partition functions for $G_{2m-5}$.  Here, $G_{2m-5}$ is the snake graph with $\mu$-invariants as defined in \Cref{def:G_m}.
\end {prop}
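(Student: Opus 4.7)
The plan is to lift the annular flip sequence to the universal cover of the annulus and then apply \Cref{thm:main} to a sub-polygon of the strip carrying the induced zig-zag triangulation. Since the super Ptolemy relations \eqref{eqn:super_ptolemy_lambda}--\eqref{eqn:super_ptolemy_mu_left} are local, the super $\lambda$-length $Z_m$ coincides with the super $\lambda$-length of any lift of the arc; so for $m \ge 3$ one may identify $Z_m$ with the super $\lambda$-length of the unique longest diagonal $\tilde\gamma_m$ in a finite sub-polygon whose inherited zig-zag triangulation has exactly $2m-5$ internal diagonals.

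By \Cref{rem:snake}, the snake graph associated to $\tilde\gamma_m$ becomes exactly $G_{2m-5}$ after we (i) set every $\lambda$-length (both internal and boundary) to $1$, matching the annular specialization $a = b = 1$ together with the unit boundary arcs, and (ii) identify each $\theta_{2i-1}$ with $\sigma$ and each $\theta_{2i}$ with $\theta$, matching the identification of $\mu$-invariants in the annular quotient. Applying \Cref{thm:main}(a) to $\tilde\gamma_m$ and performing this specialization yields
\[
    Z_m \;=\; \frac{1}{\mathrm{cross}(\tilde\gamma_m)} \sum_{M \in D(G_{2m-5})} \mathrm{wt}(M) \;=\; \sum_{M \in D(G_{2m-5})} \mathrm{wt}(M),
\]
since $\mathrm{cross}(\tilde\gamma_m)$ and every edge weight specialize to $1$. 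By \Cref{rem:y_m}, the only surviving double dimer covers are those with no cycles (each contributing $1$, totaling $x_{2m-5}$) and those with a single odd-length cycle (each contributing $\varepsilon$, totaling $y_{2m-5}\,\varepsilon$); all other covers vanish because $\sigma^2 = \theta^2 = 0$ and $\varepsilon^k = 0$ for $k \ge 2$. Thus the sum equals $x_{2m-5} + y_{2m-5}\,\varepsilon = p_{2m-5}$, as required.

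The main obstacle will be reconciling the annular spin structure with the default orientation required by \Cref{thm:main}. The annular recurrence uses the cyclic spin structure illustrated in \Cref{fig:initial_spin}, whereas \Cref{thm:main} is stated for the default orientation of \Cref{def:pos_order} on a polygon. Although \Cref{lm:spin} guarantees that any interior orientation is equivalent to the default and therefore leaves $Z_m$ invariant, we must still check that after moving to the default orientation and applying the resulting identification of $\mu$-invariants, every single-odd-cycle double dimer cover contributes $+\varepsilon$ (rather than $-\varepsilon$) to the weighted sum, so that the partition function indeed equals $p_{2m-5}$ with the unsigned count $y_{2m-5}$ from \Cref{def:G_m}. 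This reduces to verifying the positive order on a single zig-zag fan segment and confirming that, under the parity-based identification of $\mu$-invariants, cycles enclosing an odd consecutive block of tiles always pair a $\sigma$-labeled corner with a $\theta$-labeled corner in the order matching $\sigma\theta$; this can be done directly by tracing the default orientation through a two-fan zig-zag.
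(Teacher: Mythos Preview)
Your approach differs genuinely from the paper's. The paper never invokes \Cref{thm:main} here; instead it establishes, through the chain of combinatorial lemmas culminating in \Cref{lem:quadratic_relation}, that the double dimer partition functions $z_n$ themselves satisfy the quadratic recurrence $z_n z_{n-2} = z_{n-1}^2 + z_{n-1}\varepsilon + 1$, and then simply matches the initial conditions $z_3 = Z_3 = 2+\varepsilon$ and $z_4 = Z_4 = 5 + 6\varepsilon$. Your route via the universal cover is more conceptual and, were it complete, would also settle the weighted case that the paper explicitly leaves open as a conjecture in \Cref{rem:z_n_with_vars}.

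That said, your argument has two real gaps. First, ``locality'' of the super Ptolemy relation does not by itself show that $Z_m$ equals the specialized polygonal $\lambda$-length. An annular flip of $a$ replaces the unique arc $a$ by $a'$, so the subsequent annular flip of $b$ sits in a quadrilateral with \emph{two} sides equal to $a'$; but if in the polygon you flip only a single lift $a_0$ and then try to flip the adjacent $b_1$, that quadrilateral has one side $a_0'$ and one side the still-unflipped $a_1$, which after specialization gives $(a',a)$ rather than $(a',a')$. To rescue the argument you must either prove that super $\lambda$-lengths are natural under covering maps (not established anywhere in the paper), or explicitly exhibit a polygonal flip sequence---for instance, flipping enough lifts of $a$ before touching any $b_j$---whose every step specializes to the corresponding annular step, while tracking how the transformed $\mu$-invariants $\sigma',\theta'$ land in the correct triangles. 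Second, the orientation and sign check in your final paragraph is essential and is not actually carried out: you must verify that the default orientation on the zig-zag polygon, combined with the parity identification $\theta_{2i-1}\mapsto\sigma$, $\theta_{2i}\mapsto\theta$, makes every odd-length cycle contribute $+\sigma\theta$ in the positive order rather than $-\sigma\theta$. Asserting that this ``can be done directly'' is not a proof.
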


\bigskip

We will prove this proposition in steps using a sequence of lemmas.

\bigskip

\begin {lemma}
    If $m$ is even, then
    \[ p_m = p_{m-1} + p_{m-2} + \left( p_{m-2} + p_{m-4} + \cdots + p_2 + 1 \right) \varepsilon \]
    If $m$ is odd, then
    \[ p_m = p_{m-1} + p_{m-2} + \left( p_{m-2} + p_{m-4} + \cdots + p_1 + 1 \right) \varepsilon \]
\end {lemma}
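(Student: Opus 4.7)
The plan is to classify each double dimer cover $M \in D(G_m)$ by the multiplicity $i := M(e_m) \in \{0, 1, 2\}$ of the right edge of the last tile, writing $D(G_m) = D^{(0)} \sqcup D^{(1)} \sqcup D^{(2)}$, and then to sum weights within each class.

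For $i = 2$, the degree-$2$ condition at the two rightmost vertices forces $M(b_m) = M(t_m) = 0$, so restriction to the first $m-1$ tiles gives a bijection $D^{(2)} \to D(G_{m-1})$; since every edge label equals $1$, this contributes $p_{m-1}$ (essentially \Cref{lem:lemma1}(a)). For $i = 0$, the same degree condition forces $M(b_m) = M(t_m) = 2$, which propagates back to force $M(b_{m-1}) = M(t_{m-1}) = M(e_{m-1}) = 0$ at vertices $(m-1,0)$ and $(m-1,1)$; restriction to the first $m-2$ tiles yields a bijection $D^{(0)} \to D(G_{m-2})$ contributing $p_{m-2}$.

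The main case is $i = 1$, where $M(b_m) = M(t_m) = 1$. Propagating the degree-$2$ condition from vertex $(m-1,0)$ leftward, at each step one of two things happens: either $M(e_{m-j}) = 1$ (closing a cycle around the last $j$ tiles), or $M(e_{m-j}) = 0$ with $M(b_{m-j}) = M(t_{m-j}) = 1$ (extending a staircase-like pattern one tile further left). By induction on $j$, this yields a unique $k \in \{1, \dots, m\}$ such that $M$ contains a cycle enclosing precisely tiles $m-k+1, \dots, m$. If $k < m$, tile $m-k$ is forced to be empty and tiles $1, \dots, m-k-1$ form an arbitrary element of $D(G_{m-k-1})$; if $k = m$, the cycle circumscribes the whole graph with no remaining freedom. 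By \Cref{def:G_m}, the bottom-left and top-right $\mu$-invariants around such a cycle coincide when $k$ is even, forcing weight $0$ via $\sigma^2 = \theta^2 = 0$, while when $k$ is odd they are $\sigma$ and $\theta$ in some order, giving weight $\pm \varepsilon$.

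Summing over odd $k$ and using $p_0 = 1$, the $i = 1$ contribution equals $(p_{m-2} + p_{m-4} + \dots + p_2 + 1)\varepsilon$ for even $m$ and $(p_{m-2} + p_{m-4} + \dots + p_1 + 1)\varepsilon$ for odd $m$ (where for odd $m$ the ``$+1$'' collects the $k=m$ contribution of a cycle enclosing every tile). Adding the $i = 0$ and $i = 2$ contributions gives the stated formulas. The main obstacle I expect is tracking signs: the cycle weight $\theta_i \theta_j$ is sometimes $\sigma\theta$ and sometimes $\theta\sigma = -\sigma\theta$ depending on the parity of the cycle's left endpoint, so one must verify that after rewriting in the positive order inherited from the zig-zag triangulation of \Cref{rem:snake}, every odd-$k$ cycle contributes exactly $+\varepsilon$ with no cancellations; this can be checked by unpacking the default orientation on that triangulation so that the parity alternation of the corner labels is compensated by the parity alternation of the positive order.
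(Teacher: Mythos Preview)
Your proof is correct and follows essentially the same route as the paper's: decompose $D(G_m)$ according to the multiplicity of the right edge of the last tile (equivalently, into $D_R(G_m)$, $D_T(G_m)$, $D_{tr}(G_m)$), identify the first two pieces with $D(G_{m-1})$ and $D(G_{m-2})$ via \Cref{lem:lemma1} and \Cref{lem:lemma2}, and then stratify $D_{tr}(G_m)$ by the length of the cycle containing the last tile.

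Your sign concern is unnecessary here. In this section $p_m$ is \emph{defined} as $x_m + y_m\varepsilon$, where $y_m$ is the literal count of double dimer covers of $G_m$ containing exactly one cycle of odd length; it is not being computed as a signed partition function via \Cref{def:weight-M} and a positive order. So the lemma reduces to the two combinatorial identities $x_m = x_{m-1} + x_{m-2}$ and $y_m = y_{m-1} + y_{m-2} + (x_{m-2} + x_{m-4} + \cdots)$, which your bijections establish directly with no signs to track. (The $p_j\varepsilon$ terms on the right collapse to $x_j\varepsilon$ anyway since $\varepsilon^2 = 0$.)
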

\begin {proof}
    This is easily seen by the fact that $D(G_m) = D_R(G_m) \cup D_T(G_m) \cup D_{tr}(G_m)$. 
    The fact that the terms in $p_{m-1}$ correspond to the elements of $D_R(G_m)$ follows from \Cref{lem:lemma1}.
    The fact that the terms in $p_{m-2}$ correspond to the elements of $D_T(G_m)$ follows from \Cref{lem:lemma2}.
    Finally, consider the elements of $D_{tr}(G_m)$. They must have a cycle which surrounds the last tile. As mentioned
    earlier, there can only be one cycle, which must have odd length. The weight contributed by the cycle is $\varepsilon$.
    If the cycle surrounds only one tile, then there can by any double dimer cover from $D(G_{m-2})$ on the remaining part of the snake graph.
    If the cycle surrounds three tiles, the remaining part can have any double dimer cover from $D(G_{m-4})$, etc.  When $m$ is odd, these 
    contributions continue until the case of a cycle surrounding all of $G$.  On the other hand, if $m$ is even, the last contribution is the case of a cycle of 
    surrounding $(m-1)$ tiles and the remaining part must be a double dimer on the leftmost vertical edge of $G$.  In either case, there is exactly one such 
    way to complete the double dimer cover beyond the cycle of length $m$ (respectively $(m-1)$).
\end {proof}

\bigskip

\begin {lemma} \label{lem:p_m_simp}
    If $m$ is odd, then
    \[ p_m = p_{m-1} + p_{m-2} + p_{m-1} \varepsilon = (1 + \varepsilon)p_{m-1} + p_{m-2} \]
    If $m$ is even, then
    \[ p_m = p_{m-1} + p_{m-2} + (p_{m-1} - 1) \varepsilon = (1 + \varepsilon)p_{m-1} + p_{m-2} - \varepsilon \]
\end {lemma}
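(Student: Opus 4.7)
The plan is to match the two expressions for $p_m$ directly: both the previous lemma and the present statement write $p_m$ as $p_{m-1} + p_{m-2}$ plus a scalar multiple of $\varepsilon$, so it suffices to show these two $\varepsilon$-coefficients agree. A key simplification is that $\varepsilon^2 = 0$ (since $\varepsilon = \sigma\theta$ and $\sigma,\theta$ are anticommuting odd generators), so after multiplication by $\varepsilon$ only the $\varepsilon$-free component of each coefficient survives. Writing each $p_k = x_k + y_k\,\varepsilon$ with $x_k, y_k \in \mathbb{Z}$, the claim reduces to the purely numerical identities
\[
x_{m-1} \;=\; 1 + x_1 + x_3 + \cdots + x_{m-2} \quad (m \text{ odd}),
\]
\[
x_{m-1} \;=\; 2 + x_2 + x_4 + \cdots + x_{m-2} \quad (m \text{ even}).
\]

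The first preparatory step is to note that the body (i.e.\ $\varepsilon$-free) part of the previous lemma's recurrence immediately yields the Fibonacci-type recurrence $x_m = x_{m-1} + x_{m-2}$. Combined with the initial values $x_0 = 1$ and $x_1 = 2$ (the latter being the two ordinary matchings of the single tile $G_1$), the sequence $\{x_m\}$ is just $x_m = F_{m+2}$, though we will not need this identification explicitly.

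The two identities are then a straightforward strong induction on $m$. The base cases $m = 2$ (where the even identity reads $x_1 = 2$) and $m = 3$ (where the odd identity reads $x_2 = 1 + x_1 = 3$) are immediate. For the inductive step in the odd case, applying the identity at $m-2$ (also odd) gives $1 + x_1 + x_3 + \cdots + x_{m-4} = x_{m-3}$, and adding $x_{m-2}$ to both sides together with the Fibonacci-type recurrence gives
\[
1 + x_1 + x_3 + \cdots + x_{m-2} \;=\; x_{m-3} + x_{m-2} \;=\; x_{m-1},
\]
as desired. The even case is identical in form.

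I do not expect any real obstacle: the argument is essentially bookkeeping, and the main mathematical content is the classical Fibonacci identity $F_1 + F_3 + \cdots + F_{2k-1} = F_{2k}$ (and its even-indexed analogue) recast in the present notation. The role of the lemma is to repackage the cycle contribution in the previous recurrence into a cleaner self-referential form, which will be more convenient for the subsequent identification of $z_n$ with the super Fibonacci sequence arising from the annulus and once-punctured torus.
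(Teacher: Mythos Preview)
Your proposal is correct and follows essentially the same route as the paper: reduce via $\varepsilon^2=0$ to identities among the integers $x_k$, extract the Fibonacci-type recurrence $x_m = x_{m-1} + x_{m-2}$ from the $\varepsilon$-free part of the previous lemma, and finish by the standard telescoping induction. The only cosmetic difference is that the paper explicitly names the $x_m$'s as Fibonacci numbers $F_{m+2}$ during the induction, whereas you (rightly) note this identification is unnecessary and work directly with the recurrence.
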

\begin {proof}
    In light of the previous lemma, we only need to check that 
    $(p_{m-2} + p_{m-4} + \cdots  + p_2 + 1)\varepsilon$ is equal to $p_{m-1}\varepsilon$ (resp. $(p_{m-2} + p_{m-4} + \cdots  + p_1 + 1)\varepsilon$ is equal to $(p_{m-1}-1)\varepsilon$)
    when $m$ is odd (resp. even).  By multiplying by $\varepsilon$, all terms in either of these identities already containing an $\varepsilon$ vanish.  Hence, we only need to check that $x_{m-2} + x_{m-4} + \cdots  + x_2 + 1$ is equal to $x_{m-1}$ (resp. $x_{m-2} + x_{m-4} + \cdots  + x_1 + 1$ is equal to $x_{m-1}-1$.  As observed above, the $x_m$'s coincide with the Fibonacci numbers.  We thus prove our desired result via induction using the Fibonacci recurrence.
    
    Consider the base case when $m=3$.  We have $x_1 = F_3 = 2$, hence $x_1 + 1 = 3 = F_4 = x_2$.  Inductively for $m$ odd, 
    $x_{m-2} + (x_{m-4} + x_{m-6} + \cdots + x_1 + 1) = x_{m-2} + x_{m-3} = F_m + F_{m-1} = F_{m+1} = x_{m-1}$.

    For the $m$ even case, we consider the base case when $m=4$.  We have $x_2 = F_4 = 3$, hence $x_2 + 1 = 4 = F_5 - 1 = x_3 - 1$.  Inductively for $m$ even, 
    $x_{m-2} + (x_{m-4} + x_{m-6} + \cdots + x_2 + 1) = x_{m-2} + (x_{m-3} -1) = F_m + (F_{m-1} -1) = F_{m+1} - 1 = x_{m-1} - 1$.
\end {proof}

\bigskip

\begin {lemma} \label{lem:affine_recurrence}
    The $z_n$'s satisfy the recurrence
    \[ z_n = (3+2\varepsilon) z_{n-1} - z_{n-2} - \varepsilon \]
\end {lemma}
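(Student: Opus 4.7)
The plan is to iterate the two cases of \Cref{lem:p_m_simp} twice in order to convert the one-step (but parity-dependent) recurrence on the $p_m$'s into a uniform two-step recurrence on the odd-indexed subsequence $z_n = p_{2n-5}$. The only algebraic input beyond \Cref{lem:p_m_simp} is that $\varepsilon = \sigma\theta$ is nilpotent: since $\sigma^2 = \theta^2 = 0$, we have $\varepsilon^2 = \sigma\theta\sigma\theta = -\sigma^2\theta^2 = 0$, which gives the two identities
\[ (1+\varepsilon)^2 = 1+2\varepsilon, \qquad (1+\varepsilon)(1-\varepsilon) = 1. \]

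Set $m = 2n-5$, so $m$ is odd. First I would apply the odd-case recurrence of \Cref{lem:p_m_simp} to get $p_m = (1+\varepsilon)p_{m-1} + p_{m-2}$, then substitute the even-case recurrence for $p_{m-1}$ to obtain
\[ p_m = (1+\varepsilon)\bigl[(1+\varepsilon)p_{m-2} + p_{m-3} - \varepsilon\bigr] + p_{m-2} = (2+2\varepsilon)p_{m-2} + (1+\varepsilon)p_{m-3} - \varepsilon, \]
using $(1+\varepsilon)^2 = 1+2\varepsilon$ (and $(1+\varepsilon)\varepsilon = \varepsilon$).

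Next, I would use the odd-case recurrence on $p_{m-2}$, namely $p_{m-2} = (1+\varepsilon)p_{m-3} + p_{m-4}$, to solve for $p_{m-3}$. Since $(1+\varepsilon)$ is invertible with inverse $(1-\varepsilon)$, this gives $p_{m-3} = (1-\varepsilon)(p_{m-2} - p_{m-4})$. Substituting and simplifying via $(1+\varepsilon)(1-\varepsilon)=1$ yields
\[ p_m = (2+2\varepsilon)p_{m-2} + (p_{m-2} - p_{m-4}) - \varepsilon = (3+2\varepsilon)p_{m-2} - p_{m-4} - \varepsilon. \]
Translating back via $z_n = p_{2n-5}$, $z_{n-1} = p_{2n-7} = p_{m-2}$, $z_{n-2} = p_{2n-9} = p_{m-4}$, this is exactly the claimed recurrence.

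I do not expect a serious obstacle: the argument is pure algebraic manipulation once one recognizes $\varepsilon^2 = 0$. The only delicate point is correctly tracking the parity shift between the two cases of \Cref{lem:p_m_simp} (and in particular keeping the $-\varepsilon$ correction from the even case), which determines where the isolated $-\varepsilon$ in the final answer comes from.
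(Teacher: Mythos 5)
Your proposal is correct and takes essentially the same approach as the paper: substitute the even-case recurrence into the odd-case one, then eliminate the leftover $(1+\varepsilon)p_{m-3}$ term using the odd-case recurrence at index $m-2$. The only cosmetic difference is that you explicitly invert $(1+\varepsilon)$ to solve for $p_{m-3}$ and then cancel, whereas the paper substitutes $(1+\varepsilon)p_{m-3} = p_{m-2} - p_{m-4}$ directly; the net computation (and the reliance on $\varepsilon^2=0$) is identical.
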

Note that in the absence of the $\varepsilon$'s, this is the usual linear 
recurrence satisfied by every-other Fibonacci number.

\begin {proof}
    Written in terms of $p$'s, this becomes
    \[ p_{2n-5} = (3+2\varepsilon) p_{2n-7} - p_{2n-9} - \varepsilon \]
    First, since $2n-5$ is odd, we have
    \[ p_{2n-5} = (1+\varepsilon) p_{2n-6} + p_{2n-7} \]
    Since $2n-6$ is even, we have 
    \[ p_{2n-6} = (1+\varepsilon)p_{2n-7} + p_{2n-8} - \varepsilon \]
    Substitutuing this in the previous equation yields
    \begin {align*}  
        p_{2n-5} &= (1 + \varepsilon)^2 p_{2n-7} + (1+\varepsilon) p_{2n-8} - (1 + \varepsilon)\varepsilon + p_{2n-7} \\
               &= (2 + 2\varepsilon) p_{2n-7} + (1 + \varepsilon) p_{2n-8} - \varepsilon
    \end {align*}
    Finally, we note that since $2n-7$ is odd, we have the relation
    \[ (1 + \varepsilon) p_{2n-8} = p_{2n-7} - p_{2n-9} \]
    Substituting this in the equation above gives the desired result.
\end {proof}

\bigskip

\begin {lemma} \label{lem:quadratic_relation}
    The $z_n$'s satisfy the relation
    \[ z_n z_{n-2} = z_{n-1}^2 + z_{n-1} \varepsilon + 1 \]
\end {lemma}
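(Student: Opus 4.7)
The plan is to prove this identity algebraically by exhibiting the quadratic expression as a conserved quantity of the linear recurrence from \Cref{lem:affine_recurrence}. Define
\[ Q_n := z_n z_{n-2} - z_{n-1}^2 - z_{n-1}\varepsilon - 1. \]
I aim to show that $Q_{n+1} = Q_n$ for all $n$ in the valid range, so that a single base case ($n=3$) suffices.

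For the inductive step, I will apply \Cref{lem:affine_recurrence} in two ways. The recurrence $z_{n+1} = (3+2\varepsilon) z_n - z_{n-1} - \varepsilon$ gives
\[ z_{n+1} z_{n-1} = (3+2\varepsilon)\, z_n z_{n-1} - z_{n-1}^2 - \varepsilon z_{n-1}, \]
while rearranging the same recurrence with $n$ replaced by $n-1$ yields $z_{n-2} = (3+2\varepsilon) z_{n-1} - z_n - \varepsilon$, whence
\[ z_n z_{n-2} = (3+2\varepsilon)\, z_n z_{n-1} - z_n^2 - \varepsilon z_n. \]
Subtracting these two identities, the bulky $(3+2\varepsilon) z_n z_{n-1}$ term cancels, giving
\[ z_{n+1} z_{n-1} - z_n z_{n-2} = (z_n^2 - z_{n-1}^2) + (z_n - z_{n-1})\varepsilon. \]
Plugging this into $Q_{n+1} - Q_n = (z_{n+1} z_{n-1} - z_n z_{n-2}) - (z_n^2 - z_{n-1}^2) - (z_n - z_{n-1})\varepsilon$, everything cancels and $Q_{n+1} = Q_n$. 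Note that $\varepsilon = \sigma\theta$ is an even element of the super algebra and commutes with every $z_k$ (each $z_k$ is a polynomial in $\varepsilon$), so all multiplications above are unambiguous; no super-sign issues arise.

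For the base case, I verify $Q_3 = 0$ directly. From the double dimer partition function definition $z_n = p_{2n-5}$, together with the small snake graph calculations discussed in the excerpt, one obtains $z_1 = z_2 = 1$; then \Cref{lem:affine_recurrence} gives $z_3 = (3+2\varepsilon)(1) - 1 - \varepsilon = 2 + \varepsilon$. Thus $z_3 z_1 = 2 + \varepsilon = 1 + \varepsilon + 1 = z_2^2 + z_2 \varepsilon + 1$, confirming $Q_3 = 0$.

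I do not anticipate any serious obstacle; the main thing to be careful about is just the index bookkeeping and the initial values of $z_1, z_2$, which come from the underlying double dimer count on $G_{-3}$ and $G_{-1}$ (or equivalently from matching the annulus/torus data $Z_1 = Z_2 = 1$). Once these base values are in hand, the telescoping argument above is essentially mechanical. An alternative route would be a purely combinatorial bijective proof in the spirit of Kuo condensation, identifying double dimer covers of $G_{2n-5} \sqcup G_{2n-9}$ with those of $G_{2n-7} \sqcup G_{2n-7}$ plus a correction term; this would be more conceptual but considerably more involved, so the algebraic telescoping argument is the right route.
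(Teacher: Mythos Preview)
Your proposal is correct and follows essentially the same approach as the paper: both arguments use the affine recurrence of \Cref{lem:affine_recurrence} to show that the quadratic relation propagates inductively. Your packaging via the conserved quantity $Q_n$ and the subtraction of the two multiplied recurrences is a mild reorganization of the paper's computation, which instead substitutes the inductive hypothesis $z_{n-1}z_{n-3} = z_{n-2}^2 + \varepsilon z_{n-2} + 1$ directly into the expansion of $z_n z_{n-2}$; the algebraic content is identical.

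One small caution on the base case: your verification uses $z_1 = z_2 = 1$, but $z_n = p_{2n-5}$ is only combinatorially defined for $n \geq 3$. The paper is equally informal here (it omits an explicit base case), and in the proof of \Cref{prop:annulus_lambda_lengths} it effectively treats $z_3$ and $z_4$ as the initial data. If you want to be fully self-contained, it would be cleaner to take $n=5$ as the base case and verify $z_5 z_3 = z_4^2 + z_4\varepsilon + 1$ directly from $z_3 = 2+\varepsilon$, $z_4 = 5+6\varepsilon$, $z_5 = 13+20\varepsilon$, rather than invoking $z_1, z_2$.
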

\begin {proof}
    We follow the methodology used to prove Proposition 1 of \cite{MusPropp} in the ordinary (non-super) case.
    Using \Cref{lem:affine_recurrence}, we can rewrite the product $z_n z_{n-2}$ as 
    \begin{align*}
    z_{n} z_{n-2} &= \bigg( (3+2\varepsilon) z_{n-1} - z_{n-2} - \varepsilon\bigg) z_{n-2} \\
    &= (3+2\varepsilon) z_{n-1} z_{n-2} - \bigg( z_{n-2}^2 + \varepsilon z_{n-2}\bigg).
    \end{align*}
    By induction, we may rewrite \[ z_{n-1} z_{n-3} = z_{n-2}^2 + \varepsilon z_{n-2} + 1\] and hence 
    \begin{align*}
    z_{n} z_{n-2}  &= (3+2\varepsilon) z_{n-1} z_{n-2} - \bigg( z_{n-1}z_{n-3} - 1\bigg) \\
    &= z_{n-1} \bigg( (3+2\varepsilon) z_{n-2} - z_{n-3}\bigg) + 1.
    \end{align*}

    We use \Cref{lem:affine_recurrence} again, noting that $z_{n-1} = (3+2\varepsilon) z_{n-2} - z_{n-3} - \varepsilon$, and conclude that 
    \begin{align*} 
    z_{n} z_{n-2} &= z_{n-1} \bigg( z_{n-1} + \varepsilon\bigg) + 1 \\
    & = z_{n-1}^2 + \varepsilon z_{n-1} + 1
    \end{align*}
    as desired.
\end {proof}

\bigskip

\begin {proof}[Proof of \Cref{prop:annulus_lambda_lengths}]
    Comparing \Cref{eqn:super_sequence} and \Cref{lem:quadratic_relation}, we see that the $z_n$'s satisfy the same
    recurrence as the $\lambda$-lengths. We need only verify that they have the same initial conditions.  
    We observe that $z_3$ and $z_4$ are defined combinatorially as the double dimer partition functions for $G_1$ and $G_3$, respectively.  
    Comparing this with $Z_3$ and $Z_4$, as defined via recurrence (\ref{eqn:super_sequence}) and $Z_1 =Z_2 = 1$, 
    we indeed see that $Z_3 = (Z_2^2 + \varepsilon Z_2 + 1)/Z_1 = 2 + \varepsilon = z_3$ and $Z_4 = (Z_3^2 + \varepsilon Z_3 + 1)/Z_2 = 5 + 6 \varepsilon = z_4$.
\end {proof}

\begin{remark} \label{rem:z_n_with_vars}
In the more general case where $Z_1 = a$ and $Z_2 = b$ are not set to be $1$, we conjecture that the solution to recurrence (\ref{eqn:super_sequence}) 
is still given as the double dimer partition function of the snake graphs $G_{2m-5}$'s, while taking into account edge weights.

For example, letting $z_1=a$, $z_2=b$, we calculate double dimer partition functions for $G_1$, $G_2$, and $G_3$ respectively as 
$$p_1 = z_3 = \frac{b^2 + 1}{a} + \frac{b}{a}\varepsilon, $$
$$p_2 = \frac{b^2 + a^2 + 1}{ab} + \frac{b+a}{ab}\varepsilon\mathrm{~and~}$$ 
$$p_3 = z_4 = \frac{b^4 + 2b^2 + a^2 + 1}{a^2b} + \frac{ 2b^3 + 2b + a b^2 + a}{a^2b}\varepsilon.$$
In particular, $z_2 z_4 = z_3^2 + \varepsilon z_3 + 1$ in this case, hence $z_m = Z_m$ is still a solution to recurrence  (\ref{eqn:super_sequence}) 
for $1 \leq m \leq 4$ in this more general case.
\end{remark}

Above we defined $p_m = x_m + y_m \varepsilon$ based on counting the number of double dimer covers on the $2 \times (m+1)$ grid graph $G_m$ with $m$ tiles with $\mu$-invariants repeating $2$-periodically.  Recall that $x_m = F_{m+2}$ counts the double dimer covers with no cycles and hence is in bijection with dimer covers of $G_m$.   The $y_m$'s count the number of double dimer covers containing exactly one cycle, and the cycle must be of odd length.
We now give several different compact algebraic formulae for the $y_m$'s.  First we express them in terms of Fibonacci numbers.  Second, as a double-sum over binomial coefficients.  Finally, as a single sum utilizing binomial coefficients.

\begin{lemma} \label{lem:g_m}
    Define $g_m$ as the self-convolution of the Fibonacci sequence\footnote{$g_m$ is the OEIS sequence \href{https://oeis.org/A001629}{A001629}.}:
    \[ g_m = \sum_{k=1}^m F_k F_{m-k+1} \]
    Then $y_m$ can be expressed as the sum\footnote{$y_m$ is the OEIS sequence \href{https://oeis.org/A054454}{A054454} as well as the third
    column of the triangular array \href{https://oeis.org/A054453}{A054453}.}
    \[ y_m = \sum_{j=0}^{\lfloor m/2 \rfloor} g_{m-2j} \]
\end{lemma}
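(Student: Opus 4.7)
The plan is to enumerate the double dimer covers of $G_m$ counted by $y_m$ according to the location and size of their unique cycle, and then recognize the resulting count as a sum of Fibonacci self-convolutions.

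First I would observe that since $G_m$ is a $2 \times (m+1)$ grid graph, planarity forces every simple cycle to be the boundary of a contiguous $2 \times k$ block of tiles $i, i+1, \ldots, j$ with $k = j - i + 1$. Such a cycle has length $2k+2$ and uses up all $2(k+1)$ vertices in columns $i$ through $j+1$. Because of the $\sigma$/$\theta$ labelling pattern of \Cref{def:G_m}, the bottom-left and top-right corners of this block carry distinct $\mu$-invariants exactly when $i$ and $j$ share parity, i.e.\ when $k$ is odd; otherwise the cycle weight is $\sigma^2$ or $\theta^2 = 0$. So only cycles enclosing an odd number of tiles contribute to $y_m$, matching the definition of ``cycle of odd length'' used in the lemma.

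Next I would identify the complement of the cycle region as a disjoint union of two sub-snake-graphs on the columns $1, \ldots, i-1$ and $j+2, \ldots, m+1$, isomorphic respectively to $G_{i-2}$ and $G_{m-j-1}$ (with the convention that ``$G_{-1}$'' is the empty graph, having a single empty cover). Because the overall cover must contain only a single cycle, its restrictions to these two pieces have to be cycle-free, hence ordinary doubled dimer covers. Using the fact recalled in the paper that $G_\ell$ has $F_{\ell+2}$ dimer covers, the left piece contributes $F_i$ choices and the right piece $F_{m-j+1}$. Summing over all allowed $(i,k)$ gives
\[ y_m \;=\; \sum_{\substack{k\geq 1\,\text{odd} \\ k\leq m}}\; \sum_{i=1}^{m-k+1} F_i \, F_{m-k-i+2}. \]
For each fixed odd $k$, substituting $s = m-k+1$ identifies the inner sum with $\sum_{i=1}^{s} F_i F_{s+1-i} = g_s$. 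Writing $k = 2j+1$, the outer sum becomes $\sum_{j=0}^{\lfloor (m-1)/2 \rfloor} g_{m-2j}$, which equals the claimed $\sum_{j=0}^{\lfloor m/2 \rfloor} g_{m-2j}$ because the possibly-extra term $g_0$ vanishes (this is the empty sum) whenever $m$ is even.

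The main technical point to verify carefully is step two: one must check that the complement of the cycle region is genuinely a disjoint union of two sub-snake-graphs of the indicated sizes, including the boundary cases $i = 1$ or $j = m$ (where one piece is empty but the Fibonacci count $F_1 = 1$ still reads correctly), and one must invoke planarity of the thin $2 \times (m+1)$ grid to rule out any non-rectangular simple cycles. Once this geometric decomposition is in hand, the remainder of the argument is a routine rewriting of the resulting double sum.
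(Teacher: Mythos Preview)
Your proof is correct and follows essentially the same approach as the paper's: both arguments enumerate the double dimer covers counted by $y_m$ according to the position and size of the unique odd cycle, and count the cycle-free complement on each side as a product of Fibonacci numbers. The only organizational difference is that the paper first interprets $g_m$ combinatorially as the number of double dimer covers of $G_m$ with a single one-tile cycle (so that a $(2j{+}1)$-tile cycle on $G_m$ reduces to a one-tile cycle on $G_{m-2j}$), whereas you sum directly over all $(i,k)$ and then recognize the inner sum as $g_{m-k+1}$ algebraically; the underlying combinatorics is identical.
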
 

\begin{proof} 
    We will first show that $g_m$ as the number of double dimer covers of the graph $G_m$ which contain exactly one cycle and that cycle is of exactly length one.  The placement in the graph $G_m$ of the unique cycle bisects the graph.  Consequently, a double dimer cover is completed by picking the equivalent of a dimer cover on the left side of $G_m$ and a dimer cover on the right side of $G_m$.  Summing over the possibilities gives us 
    $g_m = \sum_{k=1}^m F_k F_{m-k+1}$ as desired.

    Since $y_m$ counts all double dimers containing a cycle, rather than only those of length one, we next count the double dimer covers containing a 
    cycle of length three, five, etc.  In each of these cases, counting the number of double dimer covers with a cycle of length $(2j+1)$ on $G_m$ is equivalent to counting the number of double dimer covers with a cycle of length $1$ on $G_{m-2j}$.  Consequently, we get the formula 
    $y_m = \sum_{j=0}^{\lfloor \frac{m}{2} \rfloor} g_{m-2j}$.
\end{proof}

\begin{lemma} \label{lem:g_m_recurrence}
    The values of $g_m$ satisfy the recurrence 
    $$g_m = g_{m-2} + g_{m-1} + x_{m-2}$$
    for $m \geq 3$. 
\end{lemma}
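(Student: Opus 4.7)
The plan is to reduce this lemma to a pure identity for the Fibonacci convolution, noting that by definition $x_{m-2} = F_m$. Specifically, I will prove
\[
g_m = g_{m-1} + g_{m-2} + F_m \qquad (m \ge 3)
\]
using nothing beyond the recurrence $F_n = F_{n-1} + F_{n-2}$ together with the initial values $F_1 = F_2 = 1$. A combinatorial proof in the spirit of the rest of this section is also possible --- decomposing a double dimer cover of $G_m$ containing a single unit square cycle according to the position of that cycle --- but the algebraic route is shorter and self-contained.

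First I would add $g_{m-1}$ and $g_{m-2}$ and shift the summation index in $g_{m-2}$ by one so that both sums feature the common factor $F_{m-k}$. Writing
\[
g_{m-1} + g_{m-2} \;=\; \sum_{k=1}^{m-1} F_k F_{m-k} \;+\; \sum_{k=2}^{m-1} F_{k-1} F_{m-k},
\]
the combined summand (for $k \ge 2$) becomes $(F_k + F_{k-1})\,F_{m-k}$, which by the Fibonacci recurrence collapses to $F_{k+1} F_{m-k}$. A second reindexing $j = k+1$ then turns this into a sum that looks almost exactly like $g_m$, plus a leftover $F_1 F_{m-1}$ term from the $k=1$ end of the first sum.

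The remaining task, and the only point that needs care, is chasing the boundary terms: the reindexed sum ranges over $j = 3, \ldots, m$ while $g_m$ ranges over $j = 1, \ldots, m$, so one must account for the missing $F_1 F_m$ and $F_2 F_{m-1}$ contributions against the extra $F_1 F_{m-1}$. Using $F_1 = F_2 = 1$ the $F_{m-1}$ terms cancel, leaving the discrepancy $F_1 F_m = F_m$. This gives $g_m - g_{m-1} - g_{m-2} = F_m = x_{m-2}$, as required. The main obstacle is purely the careful bookkeeping of these index shifts; conceptually the identity is a one-line manipulation.
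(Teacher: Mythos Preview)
Your argument is correct and follows essentially the same approach as the paper's proof: both combine $g_{m-1}$ and $g_{m-2}$ into a single sum via the Fibonacci recurrence and then match boundary terms using $F_1=F_2=1$. The only cosmetic difference is that you shift the index in $g_{m-2}$ and apply $F_k+F_{k-1}=F_{k+1}$ to the first factor, whereas the paper leaves the indices in place and applies $F_{m-1-k}+F_{m-k}=F_{m+1-k}$ to the second factor; the two computations are mirror images of one another.
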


\begin{proof}
    We use $F_1 = 1$, $x_{m-2} = F_m$, and \Cref{lem:g_m}  to expand the right-hand-side as
    \begin{align*}
        g_{m-2} + g_{m-1} + x_{m-2} &= \sum_{k=1}^{m-2} F_k F_{m-1-k} + \sum_{k=1}^{m-1} F_k F_{m-k} + F_m F_1 \\
                                    &= \sum_{k=1}^{m-2} F_k F_{m-1-k} \\ 
                                       &+ \left( \sum_{k=1}^{m-2} F_k F_{m-k} + F_{m-1}F_1\right) + F_m F_1.
    \end{align*}
    Note that the two sums involving $(m-2)$ terms each can now be combined using the Fibonacci recurrence as 
    $$\sum_{k=1}^{m-2} F_k (F_{m-1-k} + F_{m-k}) = \sum_{k=1}^{m-2} F_k F_{m+1-k}.$$ 
    Consequently, we see
    $$g_{m-2} + g_{m-1} + x_{m-2} = \left(\sum_{k=1}^{m-2} F_k F_{m+1-k}\right) + F_{m-1} F_1 + F_m F_1$$
    However, using the equality $F_1 = F_2$, we can rewrite this as 
    $$g_{m-2} + g_{m-1} + x_{m-2} = \sum_{k=1}^{m-2} F_k F_{m+1-k} + F_{m-1} F_2 + F_m F_1 = \sum_{k=1}^{m} F_k F_{m+1-k} = g_m$$
    where the last equality follows from \Cref{lem:g_m}.
\end{proof}

\begin{lemma} \label{lem:Fib}
    The Fibonacci numbers can also be expanded in terms of binomial coefficients as 
    $$x_m = F_{m+2} = \sum_{j=0}^{ \lfloor \frac{m+1}{2} \rfloor} {m+1 - j \choose j}$$
\end{lemma}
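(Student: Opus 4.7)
The plan is to prove this classical Fibonacci identity by a direct combinatorial argument, leveraging the dimer-cover interpretation of $x_m = F_{m+2}$ already established in the paper. Recall that $x_m$ counts the ordinary dimer covers of $G_m$, the $2 \times (m+1)$ grid graph. There is a standard weight-preserving bijection between dimer covers of the $2 \times (m+1)$ grid and tilings of a $1 \times (m+1)$ strip by unit squares and dominoes: a vertical dimer corresponds to a square, while a pair of horizontal dimers stacked in adjacent columns corresponds to a single horizontal domino covering two squares of the strip. Hence $F_{m+2}$ equals the number of such square/domino tilings of the strip.

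Next, I would stratify these tilings by the number $j$ of dominoes used. If a tiling of the $1\times(m+1)$ strip uses exactly $j$ dominoes, then it must also use $(m+1)-2j$ squares, for a total of $(m+1-j)$ tiles arranged in a row. Such a tiling is uniquely specified by choosing which $j$ out of the $(m+1-j)$ tile-positions are dominoes, giving exactly $\binom{m+1-j}{j}$ tilings. Since a tiling can have at most $\lfloor (m+1)/2 \rfloor$ dominoes (and the binomial coefficient vanishes automatically beyond this), summing over $j$ yields
\[
F_{m+2} \;=\; \sum_{j=0}^{\lfloor (m+1)/2 \rfloor} \binom{m+1-j}{j},
\]
as desired.

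As a sanity check I would verify the small cases $m=0$ (right-hand side equals $\binom{1}{0}=1=F_2$) and $m=1$ (right-hand side equals $\binom{2}{0}+\binom{1}{1}=2=F_3$), which fixes the indexing convention $F_1=F_2=1$ used in the paper. An alternative route, for readers who prefer an algebraic proof, is induction on $m$: apply Pascal's rule $\binom{m+1-j}{j} = \binom{m-j}{j} + \binom{m-j}{j-1}$ to each term, re-index the second sum by $j' = j-1$, and recognize the resulting two sums as the expressions for $F_{m+1}$ and $F_m$ respectively; the Fibonacci recurrence then closes the induction.

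The main (minor) obstacle is purely bookkeeping: ensuring the upper summation limits match on both sides after applying Pascal's rule, and verifying that the edge terms (corresponding to $j=0$ and $j=\lfloor (m+1)/2 \rfloor$) behave correctly under the parity of $m$. No deep input is required beyond the standard square/domino tiling interpretation of the Fibonacci numbers that is already implicit in the dimer-cover count $x_m = F_{m+2}$.
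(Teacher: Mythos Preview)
Your proof is correct and essentially identical to the paper's: both stratify dimer covers of $G_m$ by the number $j$ of horizontal-dimer pairs, observe that such a cover is encoded by a sequence of $m+1-j$ symbols (you call them squares/dominoes, the paper calls them $V$/$H$), and count the $\binom{m+1-j}{j}$ choices. The paper omits the small-case checks and the alternative Pascal-rule induction you sketch, but the core combinatorial argument is the same.
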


\begin{proof}
    This identity goes back to Lucas \cite{lucas1878theorie} and appears explicitly in Hoggatt-Lind \cite{hoggatt1969compositions}.  
    Here we give a combinatorial proof analogous to that of \Cref{lem:g_m}.  We recall that a double dimer cover with no cycles 
    is in bijection with a dimer cover, and utilize this throughout.
    
    Observe that the graph $G_m$ has $m$ possible tiles on which to place a pair of horizontal dimers but $(m+1)$ possible vertical edges 
    in which to place a vertical dimer.  Hence a dimer cover of $G_m$ with exactly $2j$ horizontal dimers consists 
    of $j$ tiles which contain a pair of horizontal dimers as well as $m+1-2j$ vertical dimers.  Writing out this dimer cover as a word of 
    $j + (m+1-2j) = m+1-j$ letters consisting of $j$ instances of $H$ and $(m+1-2j)$ instances of $V$, we see such dimer covers are 
    indeed counted by the binomial coefficient ${m+1-j \choose j}$.   
\end{proof}

\bigskip

\begin{prop} \label{prop:g_n_sum}
    The values of $g_n$ can be expressed compactly via the algebraic formula
    $$g_n = \sum_{j=1}^{\lfloor \frac{n+1}{2} \rfloor} j {n+1-j \choose j}.$$
\end{prop}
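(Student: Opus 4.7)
The plan is to prove the identity by strong induction on $n$, using the recurrence $g_n = g_{n-1} + g_{n-2} + x_{n-2}$ from \Cref{lem:g_m_recurrence} together with the binomial expansion of Fibonacci numbers from \Cref{lem:Fib}.

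Define $h_n := \sum_{j=1}^{\lfloor (n+1)/2 \rfloor} j \binom{n+1-j}{j}$; the goal is to show $h_n = g_n$. First I would verify the base cases $n=1,2$: a direct computation gives $h_1 = 1 = F_1 F_1 = g_1$ and $h_2 = 2 = F_1 F_2 + F_2 F_1 = g_2$. The inductive step amounts to showing that $h_n$ satisfies the same recurrence as $g_n$, namely
\[ h_n = h_{n-1} + h_{n-2} + x_{n-2}. \]

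To do this, I would apply Pascal's rule $\binom{n+1-j}{j} = \binom{n-j}{j} + \binom{n-j}{j-1}$ inside the definition of $h_n$, split the sum accordingly, and observe that the first piece $\sum_{j\ge 1} j\binom{n-j}{j}$ is exactly $h_{n-1}$. For the second piece, I reindex with $k=j-1$ to get
\[ \sum_{k\ge 0}(k+1)\binom{n-1-k}{k} \;=\; \sum_{k\ge 1} k\binom{n-1-k}{k} + \sum_{k\ge 0}\binom{n-1-k}{k}, \]
where the first summand is $h_{n-2}$ and the second summand is $\sum_{k\ge 0}\binom{n-1-k}{k} = F_n = x_{n-2}$ by \Cref{lem:Fib} (applied with $m=n-2$). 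Combining yields the desired recurrence.

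The only mildly delicate point is bookkeeping the ranges of summation $j$ (and $k$) against the floor functions $\lfloor (n+1)/2 \rfloor$, $\lfloor n/2 \rfloor$, $\lfloor (n-1)/2 \rfloor$, since the binomial coefficients $\binom{n-j}{j}$ etc.\ vanish outside these ranges; this makes the reindexing clean but needs to be checked carefully to ensure no boundary terms are lost. Once the recurrence $h_n = h_{n-1}+h_{n-2}+x_{n-2}$ is established and matched with \Cref{lem:g_m_recurrence}, together with the two base cases, induction completes the proof.
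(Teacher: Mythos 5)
Your proof is correct and follows essentially the same strategy as the paper: induction on $n$ via the recurrence $g_n = g_{n-1}+g_{n-2}+x_{n-2}$ from \Cref{lem:g_m_recurrence}, combined with Pascal's rule and \Cref{lem:Fib}. The only difference is presentational — you apply Pascal's rule directly to $\binom{n+1-j}{j}$ and treat the sums as extending over all $j$ (with vanishing tails), which neatly avoids the explicit split into $n$ even and $n$ odd cases that the paper carries out; the underlying identities and logical structure are the same.
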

\begin{proof}
    This formula appears in the notes on the page for OEIS sequence \href{https://oeis.org/A001629}{A001629}.  See also \cite{czabarka2015discrete}.  However, here we give a self-contained proof. 

    Induction and Lemmas \ref{lem:g_m_recurrence} and \ref{lem:Fib} allow us to rewrite 
    $$g_m = g_{m-2} + g_{m-1} + x_{m-2} = 
    \sum_{j=1}^{\lfloor \frac{m-1}{2} \rfloor} j {m-1-j \choose j}
    + \sum_{j=1}^{\lfloor \frac{m}{2} \rfloor} j {m-j \choose j}
    + \sum_{j=0}^{\lfloor \frac{m-1}{2} \rfloor} {m-1-j \choose j}$$
    $$ =  {m-1 \choose 0} + \sum_{j=1}^{\lfloor \frac{m-1}{2} \rfloor} (j+1) {m-1-j \choose j} + \sum_{j=1}^{\lfloor \frac{m}{2} \rfloor} j {m-j \choose j}.$$

    At this point, we divide into cases, based on the parity of $m$. First consider the case when $m = 2k$ is even. Then $\lfloor \frac{m-1}{2} \rfloor = k-1$ 
    and $\lfloor \frac{m}{2} \rfloor = k$, and so we have
    \begin {align*}
        g_m &= 1 + \sum_{j=1}^{k-1} (j+1) \binom{m-1-j}{j} + \sum_{j=1}^k j \binom{m-j}{j} \\
            &= 1 + \sum_{j=2}^k j \binom{m-j}{j-1} + \sum_{j=1}^k j \binom{m-j}{j} \\
            &= 1 + \binom{m-1}{1} + \sum_{j=2}^k j \left[ \binom{m-j}{j-1} + \binom{m-j}{j} \right] \\
            &= m + \sum_{j=2}^k j \binom{m-j+1}{j} 
    \end {align*}
    Finally, note that $m = 1 \cdot \binom{m-1+1}{1}$, and also $k = \lfloor \frac{m+1}{2} \rfloor$, so this agrees with the desired formula in the case $m=2k$.

    Next, we consider the case where $m = 2k+1$ is odd. In this case, $\lfloor \frac{m-1}{2} \rfloor = \lfloor \frac{m}{2} \rfloor = k$, so we have
    \begin {align*}
        g_m &= 1 + \sum_{j=1}^k (j+1) \binom{m-1-j}{j} + \sum_{j=1}^k j \binom{m-j}{j} \\
            &= 1 + \sum_{j=2}^{k+1} j \binom{m-j}{j-1} + \sum_{j=1}^k j \binom{m-j}{j} \\
            &= 1 + \binom{m-1}{1} + (k+1) \binom{m-k-1}{k} + \sum_{j=2}^k j \left[ \binom{m-j}{j-1} + \binom{m-j}{j} \right] \\
            &= m + (k+1) + \sum_{j=2}^k j \binom{m-j+1}{j}
    \end {align*}
    As before, $m = 1 \cdot \binom{m-1+1}{1}$ gives the $j=1$ term, and since $m=2k+1$ we see additionally $k+1 = (k+1) \binom{m-(k+1)+1}{k+1}$ is the $j=k+1$ term.
    Since $ \lfloor \frac{m+1}{2} \rfloor = k+1$, this gives the result.
\end{proof}

\begin{corollary} 
    Hence the parameter $y_m$ can also be expressed compactly as
    $$y_m = \sum_{n=0}^{\lfloor \frac{m}{2} \rfloor} \sum_{j=1}^{\lfloor \frac{n+1}{2} \rfloor} j {n+1-j \choose j}$$
\end{corollary}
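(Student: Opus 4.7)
The plan is to combine the two preceding results, \Cref{lem:g_m} and \Cref{prop:g_n_sum}, by direct substitution, as the corollary is stated as an immediate consequence of these. First I would invoke \Cref{lem:g_m}, rewritten (by renaming the summation index from $j$ to $n$) as
\[ y_m = \sum_{n=0}^{\lfloor m/2 \rfloor} g_{m-2n}. \]
Then I would apply \Cref{prop:g_n_sum}, which provides the compact binomial-coefficient formula for $g_k$ with $k \geq 0$, to each term $g_{m-2n}$ appearing in the outer sum. Substituting the resulting expression into the outer sum yields the claimed nested double sum.

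Since both ingredients have already been proved, the argument contains no new combinatorial or algebraic content; there is no genuine obstacle. The only bookkeeping to carry out is verifying that the index ranges and the arguments of the binomial coefficients transform correctly under the substitution $k = m-2n$, which is a routine check. Thus the corollary is obtained immediately by combining the statements of the lemma and the proposition.
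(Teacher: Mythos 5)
Your strategy — substituting the compact formula of \Cref{prop:g_n_sum} into $y_m = \sum_{j=0}^{\lfloor m/2 \rfloor} g_{m-2j}$ from \Cref{lem:g_m} — is exactly right and is what the ``Hence'' in the corollary signals. But your assertion that this substitution ``yields the claimed nested double sum'' is not correct, and the ``routine check'' you defer is precisely where the difficulty lies. Carrying the substitution out literally produces
\[
y_m = \sum_{n=0}^{\lfloor m/2 \rfloor} g_{m-2n} = \sum_{n=0}^{\lfloor m/2 \rfloor}\; \sum_{j=1}^{\lfloor (m-2n+1)/2 \rfloor} j \binom{m-2n+1-j}{j},
\]
in which the inner sum depends on the argument $m-2n$, not on the bare outer index $n$. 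The corollary as printed has $n$ where $m-2n$ should appear; read literally it asserts $y_m = \sum_{n=0}^{\lfloor m/2 \rfloor} g_n$, which already fails for small $m$: for $m=1$ it gives $g_0 = 0$ rather than $y_1 = 1$, and for $m=2$ it gives $g_0 + g_1 = 1$ rather than $y_2 = 2$. So the printed statement contains a typographical error, and a careful execution of your own plan — actually tracking the argument of $g$ under the substitution $k \mapsto m - 2n$ — would have revealed it. A correct proof should either work with the corrected formula displayed above or explicitly note the misprint; your proposal claims the bookkeeping comes out as stated when in fact it does not.
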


\bigskip

\begin{prop} \label{prop:xyg}
    We can express
    \[ 
        x_n + y_n = \begin {cases}
            g_{n+1}     & \text{ if $n$ is even} \\[1ex]
            g_{n+1} + 1 & \text{ if $n$ is odd}
        \end {cases}
    \]
\end{prop}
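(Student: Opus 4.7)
The plan is to prove this by induction on $n$, with two base cases corresponding to the two parity classes. For $n=1$, direct computation gives $x_1 + y_1 = F_3 + g_1 = 2 + 1 = 3 = g_2 + 1$, and for $n=2$, we have $x_2 + y_2 = F_4 + (g_2 + g_0) = 3 + 2 = 5 = g_3$, using the convention $g_0 = 0$ (equivalently $y_0 = 0$), which is forced by the empty-sum definition $g_0 = \sum_{k=1}^0 F_k F_{-k+1}$.

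For the inductive step with $n \geq 3$, I would first extract the identity $y_n = g_n + y_{n-2}$ by peeling off the $j=0$ term from the sum in \Cref{lem:g_m}; the remaining terms $g_{n-2}, g_{n-4}, \ldots$ constitute exactly $y_{n-2}$. Combined with the Fibonacci recurrence $x_n = x_{n-1} + x_{n-2}$ (which follows from $x_n = F_{n+2}$ in \Cref{lem:Fib}), this yields
\[
    x_n + y_n \;=\; (x_{n-1} + g_n) + (x_{n-2} + y_{n-2}).
\]

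Next I would invoke \Cref{lem:g_m_recurrence}, rearranged as $x_{n-1} + g_n = g_{n+1} - g_{n-1}$, to rewrite the first parenthesized term. The inductive hypothesis applied to $n-2$, together with the observation that $n-2$ and $n$ have the same parity (so the correction $\delta_n \in \{0,1\}$ satisfies $\delta_{n-2} = \delta_n$), gives $x_{n-2} + y_{n-2} = g_{n-1} + \delta_n$. Substituting and telescoping:
\[
    x_n + y_n \;=\; (g_{n+1} - g_{n-1}) + (g_{n-1} + \delta_n) \;=\; g_{n+1} + \delta_n,
\]
which completes the induction.

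I expect no serious obstacle here: the proof is essentially a clean algebraic consequence of the recurrences established in Lemmas \ref{lem:g_m}, \ref{lem:g_m_recurrence}, and \ref{lem:Fib}. The only points requiring care are (i) fixing the convention $g_0 = y_0 = 0$ so that the base case $n=2$ and the identity $y_n = g_n + y_{n-2}$ hold uniformly across parities, and (ii) verifying that the parity of the $+1$ correction propagates correctly through the two-step induction, which it does since the recurrence shifts $n$ by $2$ rather than by $1$. A combinatorial bijection between the disjoint union $D_0(G_n) \sqcup D_1^{\text{odd}}(G_n)$ (dimer covers plus odd-cycled double dimer covers on $G_n$) and length-one-cycled double dimer covers on $G_{n+1}$ might also be possible, but the inductive argument is shorter and avoids case analysis on cycle placement.
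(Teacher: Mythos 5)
Your proof is correct, but it takes a genuinely different route from the paper's. The paper proves this proposition by a combinatorial decomposition of $\widehat D(G_n)$ (double dimer covers with non-zero weight) into $\widehat D_R \cup \widehat D_T \cup \widehat D_{tr}$, invoking the recurrence lemmas from Section 5 to count the first two pieces, and then handling $\widehat D_{tr}(G_n)$ by a separate case analysis on cycle length together with the Fibonacci partial-sum identities (the source of the parity split). You instead work entirely at the level of the algebraic identities already established: peel $y_n = g_n + y_{n-2}$ directly from Lemma~\ref{lem:g_m}, combine with $x_n = x_{n-1} + x_{n-2}$ and the rearranged form of Lemma~\ref{lem:g_m_recurrence} (with $m = n+1$, valid for $n \geq 2$), and let the induction hypothesis at $n-2$ absorb the parity correction $\delta_n$. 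This is shorter and avoids re-deriving the placement-of-the-cycle case analysis; in effect you have noticed that the combinatorial content needed here is already encoded in the earlier lemmas, so no new bijection is required. Your extra bookkeeping — declaring $g_0 = y_0 = 0$ and checking that it is consistent with the empty-sum convention in the definition of $g_m$ — is the right move and is needed to make the $n=2$ base case and the $y_n = g_n + y_{n-2}$ shift hold uniformly; the paper sidesteps this by never invoking $g_0$ explicitly. Both proofs are valid; yours trades the paper's combinatorial insight for a cleaner telescoping argument that leans harder on Lemmas~\ref{lem:g_m}, \ref{lem:g_m_recurrence}, and \ref{lem:Fib}.
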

\begin {proof}
    We will prove this result by induction on $n$. For $n=1$, we have $x_n + y_n = 3$, and 
    $g_2 + 1 = F_1 F_2 + F_2 F_1 + 1 = 1 \cdot 1 + 1 \cdot 1 + 1 = 3$.
    For $n=2$, $x_n + y_2 = 5$, and $g_3 = F_1 F_3 + F_2^2 + F_3 F_1 = 1 \cdot 2 + 1^2 + 2 \cdot 1 = 5$. So the base cases are confirmed.

    For the inductive step we will make use of the fact that $D(G_n) = D_T(G_n) \cup G_R(G_n) \cup D_{tr}(G_n)$. 
    However, we require a slight modification. Let $\widehat{D}(G)$ denote the set of double dimer covers with non-zero weight
    (and similarly for $\widehat{D}_R(G)$, $\widehat{D}_T(G)$, and $\widehat{D}_{tr}(G)$). Note that with this notation, we have $x_n + y_n = \left| \widehat{D}(G_n) \right|$.

    \Cref{lem:lemma1} and \Cref{lem:lemma2} easily generalize to give bijections $\widehat{D}_R(G_n) \to \widehat{D}(G_{n-1})$ 
    and $\widehat{D}_T(G_n) \to \widehat{D}(G_{n-2})$. This means
    \begin {equation} \label{eq:DR_union_DT}
        \left| \widehat{D}_R(G_n) \cup \widehat{D}_T(G_n) \right| = x_{n-1} + y_{n-1} + x_{n-2} + y_{n-2} = 1 + g_{n-1} + g_n,
    \end {equation}
    where the last equality follows by induction.
    
    Finally, we consider $\widehat{D}_{tr}(G_n)$. Note that elements of $D_{tr}(G)$ have a cycle around the last tile. But since elements
    of $\widehat{D}(G_n)$ can only have a single cycle (of odd length), this means the cycle around the last tile is the \emph{only} cycle.
    If the cycle at the end of $G_n$ surrounds $2k+1$ tiles,
    then the remaining part of the double dimer cover is an element of $D(G_{n-2-2k})$ which contains no cycles. Recall that these are counted by
    $x_{n-2-2k}$, which by \Cref{lem:Fib} is equal to $F_{n-2k}$. So we have
    \[ \left| \widehat{D}_{tr}(G_n) \right| = \sum_{k=1}^{\lfloor \frac{n+1}{2} \rfloor} F_{n-2k} \]
    Using the identities $\sum_{i=1}^k F_{2i-1} = F_{2k}$ and $\sum_{i=1}^k F_{2i} = F_{2k+1} - 1$, just like in the proof of 
    \Cref{lem:p_m_simp}, we get two cases depending on the parity:
    \begin {equation} \label{eq:D_tr} 
        \left| \widehat{D}_{tr}(G_n) \right| = 
        \begin {cases}
            F_{n+1} - 1 = x_{n-1} -1 & \text{ if $n$ is even} \\[1ex]
            F_{n+1}     = x_{n-1}    & \text{ if $n$ is odd}
        \end {cases}
    \end {equation}
    Finally, recall that $x_n + y_n = \left| \widehat{D}(G_n) \right| = \left| \widehat{D}_R(G_n) \cup \widehat{D}_T(G_n) \right| + \left| \widehat{D}_{tr}(G_n) \right|$.      
    Combining \Cref{eq:DR_union_DT}, \Cref{eq:D_tr}, and \Cref{lem:g_m_recurrence}, gives the result.
\end {proof}

\bigskip

The following result was conjectured by the third author in an unpublished work (see \Cref{footnote:sylvester}). 

\bigskip

\begin{prop} \label{prop:y_m_combined}
    $$y_{2m}   =  \sum_{k=0}^m (2k)   \binom{m+k+1}{2k+1}$$
    $$y_{2m+1} =  \sum_{k=0}^m (2k+1) \binom{m+k+2}{2k+2}$$
    Note that these two expressions can be combined to give a formula for all $y_n$:
    \[ y_n = \sum_{k=0}^{\lfloor n/2 \rfloor} (n - 2k) \binom{n-k+1}{n-2k+1} \]
\end{prop}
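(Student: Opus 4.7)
The plan is to prove the combined form $y_n = \sum_{k=0}^{\lfloor n/2\rfloor}(n-2k)\binom{n-k+1}{n-2k+1}$, from which the two stated formulas follow immediately by the substitution $k\mapsto m-k$ with $n=2m$ or $n=2m+1$. Using the symmetry $\binom{n-k+1}{n-2k+1}=\binom{n-k+1}{k}$ together with the block decomposition of dimer covers used in the proof of \Cref{lem:Fib}, the binomial $\binom{n-k+1}{k}$ counts dimer covers $M$ of $G_n$ with exactly $k$ H-pairs and $v:=n+1-2k$ vertical dimers (``V's''). Since $n-2k=v-1$, the summand $(n-2k)\binom{n-k+1}{k}$ is then the number of pairs $(M,G)$ in which $G$ is a choice of gap between two consecutive V-columns of $M$.

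Next I would construct a bijection between these pairs and the set of double dimer covers of $G_n$ with exactly one non-trivial cycle, which is what $y_n$ counts by \Cref{rem:y_m}. Given $(M,G)$ with consecutive V's at columns $c<c'$, the non-V columns strictly between $c$ and $c'$ are paired into adjacent H-pairs, forcing $c'-c=2k+1$ to be odd. Send $(M,G)$ to the double dimer cover obtained by doubling every edge of $M$ lying outside the sub-strip on columns $c,c+1,\dots,c'$, and replacing the content of that sub-strip with the perimeter cycle around the $2k+1$ tiles $T_{c+1},\dots,T_{c'}$. Because the tile count is odd, the $\sigma$--$\theta$ alternation in \Cref{def:G_m} makes the bottom-left and top-right $\mu$-invariants of the cycle different, so its weight is non-zero, while every other component of the image is a doubled edge and hence the cycle is unique. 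The inverse map is forced by $2$-regularity: in any double dimer cover with a single non-trivial cycle, every other connected component must be a doubled edge, so one may undouble the exterior to recover single dimers and replace the cycle around tiles $a,\dots,a+2k$ with V-dimers at columns $a-1$ and $a+2k$ together with $k$ H-pairs at the intermediate tiles.

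Finally, to reconcile the summation range, note that $k\le\lfloor n/2\rfloor$ is equivalent to $v\ge 1$: the case $v=1$ gives $n-2k=0$ and contributes nothing, consistent with the absence of gaps when there is only one V; and the case $v=0$ arises only for $n$ odd (as the unique all-H-pair cover) and is correctly excluded by the upper bound, since otherwise the formal expression $(v-1)$ would contribute $-1$ from the term at $k=(n+1)/2$. The main technical step will be verifying the bijection at the two boundary columns $c$ and $c'$: one must confirm that in the image the cycle alone provides degree $2$ at those vertices, via the vertical edges at columns $c,c'$ combined with the adjacent top and bottom edges of $T_{c+1}$ and $T_{c'}$, so that no edge straddling the cycle boundary is used and the undoubling is unambiguous.
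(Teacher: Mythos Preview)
Your proposal is correct and takes a genuinely different route from the paper. The paper's proof is purely algebraic: it invokes \Cref{prop:xyg} to write $y_{2m+1}=g_{2m+2}-x_{2m+1}+1$, substitutes the closed forms from \Cref{prop:g_n_sum} and \Cref{lem:Fib}, and then simplifies term-by-term using the identity $j\binom{2m+3-j}{j}=(2m+4-2j)\binom{2m+3-j}{j-1}$ followed by an index reversal. Your argument is instead a direct bijection: you read $(n-2k)\binom{n-k+1}{k}$ as counting pairs $(M,G)$ where $M$ is a dimer cover of $G_n$ with $k$ H-pairs and $G$ is a gap between consecutive V's, and you biject these with the single-odd-cycle double dimer covers counted by $y_n$. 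This bypasses \Cref{prop:g_n_sum} and \Cref{prop:xyg} entirely and gives a combinatorial explanation for why the coefficient $(n-2k)$ appears, which the algebraic proof does not.

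One small point to clean up: you use the symbol $k$ both for the summation index (the total number of H-pairs in $M$) and, in the line ``forcing $c'-c=2k+1$'', for the number of H-pairs in the chosen gap. These are different quantities in general---the bijection does not preserve any fixed $k$---so you should rename the local variable (say $\ell$) when describing the image. The boundary verification you flag at the end is indeed routine: since $M$ has V's at columns $c$ and $c'$, no dimer of $M$ crosses into the sub-strip, so doubling the exterior and inserting the perimeter cycle produces a valid $2$-regular multigraph with exactly one non-trivial cycle.
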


\begin{proof}
    We will prove the formula for $y_{2m+1}$. The calculation for $y_{2m}$ is completely analogous.

    Using Propositions \ref{prop:g_n_sum} and \ref{prop:xyg}, respectively, we write 
    $$g_{2m+2} = \sum_{j=1}^{m+1} j {2m+3-j \choose j} = x_{2m+1} + y_{2m+1} - 1.$$
    Then, using Lemma \ref{lem:Fib}, we can rearrange the above equation to obtain
    $$y_{2m+1} = 1 + \sum_{j=1}^{m+1} j {2m+3-j \choose j}  - \sum_{j=0}^{m+1} {2m+2-j \choose j}.$$

    By peeling off the last term, and then shifting the index in the second sum, we get
    $$y_{2m+1} = 1 + \left(\sum_{j=1}^{m+1} \left( j {2m+3-j \choose j}  - {2m+3-j \choose j-1}\right)\right) - {m+1 \choose m+1} .$$

    By applying the classical binomial coefficient identity $\binom{n}{k} = \frac{n+1-k}{k} \binom{n}{k-1}$ to the expressions above, we can
    see that $j \binom{2m+3-j}{j} = (2m+4-2j) \binom{2m+3-j}{j-1}$. Subtracting $\binom{2m+3-j}{j-1}$ from both sides, we see that
    the summands are each equal to $(2m+3-2j) \binom{2m+3-j}{j-1}$. We can thus rewrite the equation above as
    $$y_{2m+1} = \sum_{j=1}^{m+1} (2m+3-2j) {2m+3-j \choose j-1}.$$
    Finally, to get the desired result, we let $k = m+1 - j$ and run through the sum backwards.
\end{proof}

\begin{remark} \label{rem:p_even}
    Recall that for $n \geq 3$, the $x_{n}$'s are the Fibonacci numbers when $x_1=x_2=1$.  
    In the cluster algebra associated to an annulus with a marked point on each boundary, where $\{x_1,x_2\}$ is the initial cluster 
    consisting of two formal variables, then the values of $z_n = x_{2n-5}$ are the corresponding cluster variables associated to other possible arcs around the annulus.

    One can also focus on the even-indexed entries, but instead of corresponding to cluster variables, the $x_{2n-4}$'s 
    correspond to the lambda lengths of peripheral arcs which start at the marked point on the inner boundary, wind around $(n-1)$ times, and 
    end at the marked point on the inner boundary.  Such peripheral arcs have self-crossings, in particular $(n-2)$ such self-intersections, when $n\geq 3$.  
    See \cite{gunawan2019cluster}.
\end{remark}

\begin{remark} \label{rem:w_n}
    As explained above after \Cref{rem:snake}, for $n\geq 3$, $x_{2n-4}$'s have a combinatorial interpretation as the (weighted) 
    number of dimer covers in the snake graph $G_{2n-4}$.  Additionally, as we just saw in Remark \ref{rem:p_even}, such numbers 
    (resp. expressions) have interpretations as lambda lengths of peripheral arcs in the annulus.
     
    Defining $w_n = p_{2n-4} = x_{2n-4} + y_{2n-4}$, we see that for $n\geq 3$, $w_n = p_{2n-4}$ counts the (weighted) number of 
    double dimer covers in $G_{2n-4}$.  For example, as computed in Remark \ref{rem:z_n_with_vars}, we see that 
    $w_3 = p_2 = \frac{b^2 + a^2 + 1}{ab} + \frac{b+a}{ab}\varepsilon$.  Comparison with the classical case motivates the following conjecture.
\end{remark}

\begin{conj} \label{conj:w_n}
    If we let $w_1=w_2=1$ (or if we let $w_1=a$ and $w_2=b$), the values of $w_n = p_{2n-4}$ correspond to the super $\lambda$-lengths of a peripheral arc in 
    an annulus, as described in \Cref{rem:p_even}, except in the context of the decorated super-Teichm\"uller space.
\end{conj}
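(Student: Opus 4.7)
The plan is to mirror, case-by-case, the argument used in \Cref{prop:annulus_lambda_lengths} for the odd-indexed subsequence $z_n = p_{2n-5}$. The first task is to give a precise meaning to the super $\lambda$-length $W_n$ of the peripheral arc $\gamma_n$ described in \Cref{rem:p_even}. Since $\gamma_n$ has $(n-2)$ self-intersections for $n\geq 3$, its super $\lambda$-length is not directly defined by the Penner--Zeitlin formalism of \cite{pz_19}; following the classical treatment in \cite{gunawan2019cluster}, one would resolve each self-crossing using the super Ptolemy relation \Cref{eqn:super_ptolemy_lambda}, yielding an explicit rational expression for $W_n$ in terms of the cluster variables $Z_m = z_m$ already treated by \Cref{prop:annulus_lambda_lengths} and the odd quantity $\varepsilon = \sigma\theta$.

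Next, I would extract a mixed two-term recurrence relating the sequences $\{z_n\}$ and $\{w_n\}$ on the combinatorial side by applying \Cref{lem:p_m_simp} at consecutive indices. Substituting $p_{2n-5}=z_n$ and $p_{2n-4}=w_n$ into the two cases of \Cref{lem:p_m_simp} gives
\begin{align*}
    z_{n+1} &= (1+\varepsilon)w_n + z_n, \\
    w_{n+1} &= (1+\varepsilon)z_{n+1} + w_n - \varepsilon.
\end{align*}
Eliminating $z$ from this pair yields a linear recurrence for $w_n$ of the form $w_{n+1} = (3+2\varepsilon)w_n - w_{n-1} + c\,\varepsilon$ for an explicit constant $c$, in direct analogy with \Cref{lem:affine_recurrence}. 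On the geometric side, I would derive the same recurrence by applying successive super Ptolemy transformations to quadrilaterals bounded by $\gamma_n$, $\gamma_{n-1}$, and the adjacent cluster arcs coming from the flip sequence that produces $\gamma_n$. Comparing base cases $W_2$ and $W_3$ against $w_2 = 1$ and $w_3$ (either in the normalized case or in the general case, as computed in \Cref{rem:z_n_with_vars}) would then force $W_n = w_n$ for all $n$ by induction.

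The main obstacle is the first step: rigorously defining $W_n$ in the super setting and proving that the skein-style resolution of self-intersections is independent of the order in which crossings are resolved. In the classical case this is guaranteed by Roger--Yang-type skein relations, but the corresponding super skein algebra has not yet been developed. An alternative, more combinatorial route would be to extend \Cref{thm:main} to \emph{band graphs}, i.e., the cyclic graphs obtained by gluing the two ends of a snake graph, so that $w_n$ is \emph{by construction} the double-dimer partition function of the band graph associated to $\gamma_n$. One would then need to verify that this partition function satisfies the same Penner--Zeitlin axioms by a super Pentagon-style consistency check mirroring \cite[Appendix A]{moz21}. Either route reduces the conjecture to verifying concrete recurrences and initial conditions that the purely combinatorial quantity $w_n = p_{2n-4}$ already satisfies.
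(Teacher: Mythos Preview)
The statement you are attempting to prove is labeled a \emph{conjecture} in the paper, and the paper does not supply a proof. Immediately after stating it, the authors write that ``the main obstruction to proving [it] \dots\ is that \dots\ to be able to compute the super $\lambda$-length of an arc with self-intersections would require a super analogue of the skein relations for resolving a crossing rather than simply a super analogue of the Ptolemy exchange relation.'' In other words, the quantity $W_n$ you are trying to identify with $w_n$ is not even defined in the existing super-Teichm\"uller framework, and the authors present $w_n$ precisely as a \emph{candidate} that might be used to reverse-engineer the missing definition.

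Your proposal correctly isolates this same gap: you note that the first step, ``rigorously defining $W_n$ in the super setting and proving that the skein-style resolution of self-intersections is independent of the order in which crossings are resolved,'' is the obstacle, and that no super skein algebra currently exists to carry this out. That is exactly the paper's position. So your proposal is not a proof but a sketch of what a proof \emph{would} require, and on that level it aligns with the authors' own discussion. The combinatorial portion of your plan is sound: your mixed recurrences $z_{n+1}=(1+\varepsilon)w_n+z_n$ and $w_{n+1}=(1+\varepsilon)z_{n+1}+w_n-\varepsilon$ follow correctly from \Cref{lem:p_m_simp}, and eliminating $z$ gives $w_{n+1}=(3+2\varepsilon)w_n-w_{n-1}$ (so in fact your constant $c$ is zero). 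But matching this against a recurrence for $W_n$ presupposes that $W_n$ has been defined and shown to satisfy a Ptolemy-type relation, which is the open problem.
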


The main obstruction to proving \ref{conj:w_n} and such a geometric significance is that unlike the ordinary case, we 
only have $\mu$-invariants attached to triangles in a triangulation.  To be able to compute the super $\lambda$-length 
of an arc with self-intersections would require a super analogue of the skein relations for resolving a crossing rather than 
simply a super analogue of the Ptolemy exchange relation for flipping a diagonal in an oriented quadrilateral.

Nonetheless, since we have the candidate of $w_n$'s for the peripheral arcs in an annulus, along with a conjectured 
combinatorial interpretation that would allow us to calculate the corresponding expressions, perhaps it is possible to 
reverse engineer how to properly define super skein relations in the context of decorated super-Teichm\"uller space.

Another natural follow-up question to the above work involves super-Markov numbers.

\begin{remark} \label{rem:Markoff}
    The super $\lambda$-lengths of a once-punctured torus are studied by Huang, Penner, and Zeitlin in \cite{mcshane} 
    where triples of arcs in a triangulation have super $\lambda$-lengths satisfying the super analogue 
    of the Markoff equation (c.f. \cite[equation 26]{mcshane})
    \[x^2+y^2+z^2+(xy+yz+xz)\varepsilon=3(1+\varepsilon)xyz\]  
    where $\varepsilon=\sigma\theta$.
    This motivates the following conjecture.
\end{remark}

\begin{conj} \label{conj:Markoff}
    Begin with an oriented triangulation of the once-punctured torus like in \Cref{fig:other_flips}.  
    However, in this case, keep $e$ as a formal variable instead of setting it to be $1$.  Allowing flips in all three directions 
    iteratively yields all possible arcs in a once punctured torus.  We claim that the super $\lambda$-lengths of such arcs are super analogues 
    of the Markoff numbers, and have combinatorial interpretations using double dimer covers of the snake graphs appearing in Section 7 of \cite{propp05} 
    in the presence of appropriately specialized $\mu$-invariants.
\end{conj}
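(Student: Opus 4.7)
The plan is to adapt the inductive framework of \Cref{thm:main} to the once-punctured torus setting in three stages: set up the combinatorial objects, verify the super Markoff relation, and then establish the double dimer formula by induction on flip distance. First, following Section 7 of \cite{propp05}, to any arc $\gamma$ in the once-punctured torus obtained from the initial triangulation by a sequence of flips, I associate a Markoff snake graph $G_\gamma$ using the same rule as \Cref{def:Ggamma}: each crossing of $\gamma$ with an internal edge produces a tile, and adjacent tiles are glued along the shared triangle. Because the torus has only two triangles, each tile of $G_\gamma$ is labeled by the pair $(\sigma, \theta)$ according to which torus triangle sits in its bottom-left and top-right corners, with the pattern alternating as dictated by the flip sequence. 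I then define $\lambda_\gamma$ combinatorially by the analogue of \Cref{thm:main}(a), where the weight of a double dimer cover is the natural specialization of \Cref{def:weight-M} with these repeated $\mu$-invariants.

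The second stage is to verify the super Markoff relation among triples. Starting from the cyclically oriented initial triangulation of \Cref{fig:initial_spin} and applying the super Ptolemy relations \Cref{eqn:super_ptolemy_lambda,eqn:super_ptolemy_mu_right,eqn:super_ptolemy_mu_left} with the identifications $a=c$ and $b=d$ forced by the torus gluing, one obtains recurrences of the form $ef = a^2 + b^2 + ab\,\varepsilon$ where $\varepsilon = \sigma\theta$. Since every flip preserves the cyclic spin structure (as observed in \Cref{fig:initial_spin} and \Cref{fig:other_flips}), the identification $\sigma'\theta' = \sigma\theta$ continues to hold after arbitrary flip sequences, so no sign ambiguities arise. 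Combining these local flip relations with the Farey-mediant bookkeeping of arcs, as in \cite{propp05}, produces the super Markoff equation of Huang--Penner--Zeitlin \cite[equation 26]{mcshane} for any triple of arcs forming a coherent triangulation.

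The third stage is the main induction, proving that $\lambda_\gamma$ as defined combinatorially equals the actual super $\lambda$-length. I would proceed by induction on the number of flips separating the triangulation containing $\gamma$ from the initial one. The base case reduces to $G_1$ and $G_2$ and can be checked directly, as in the base case of \Cref{thm:main} and the calculations of \Cref{prop:annulus_lambda_lengths}. For the inductive step, fix a flip that exchanges $\gamma$ for $\gamma'$ across a quadrilateral with identified opposite sides; the super Ptolemy relation expresses the new $\lambda$-length as a three-term sum in terms of $e$, $a$, $b$, and the collapsed $\mu$-invariants $\sigma,\theta$. On the combinatorial side, $G_{\gamma'}$ is built from $G_\gamma$ by exactly the concatenation/staircase-extension moves whose double dimer partition functions obey the three-term recursions in \Cref{lem:lemma1,lem:lemma2,lem:lemma3,lem:lemma4}, and once the $\mu$-invariants are specialized these match the super Ptolemy terms.

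The hard part will be the $\mu$-invariant bookkeeping inside the induction. Because the $\theta_i$ variables from the polygon case are identified in pairs on the torus, many monomials that were independent in \Cref{thm:main} now collide; double dimer covers whose cycles contribute $\sigma^2$ or $\theta^2$ vanish, and one must verify that the surviving terms reassemble into exactly the torus super $\lambda$-length, rather than some strictly larger combinatorial sum. This demands careful tracking, through each application of \Cref{lem:lemma4} in particular, of whether a newly enlarged cycle surrounds an odd or even number of $\sigma$-tiles versus $\theta$-tiles, and how the toggling involutions of \Cref{def:star_involutions} behave after specialization. A secondary subtlety is that the torus admits four spin structures while our combinatorial construction singles out one; one must verify that the cyclic orientation of \Cref{fig:initial_spin} is genuinely preserved under \emph{every} flip sequence, not merely the three depicted in \Cref{fig:other_flips}. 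Once these two points are settled, the remainder of the proof should parallel the template of \Cref{thm:main} closely enough that \Cref{conj:Markoff} follows.
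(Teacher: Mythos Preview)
The statement you are attempting to prove is labeled in the paper as a \emph{Conjecture}, not a theorem or proposition. The paper provides no proof whatsoever; it presents this as an open problem, immediately followed by further open questions (Questions~11.15 and~11.16). There is therefore no ``paper's own proof'' against which to compare your attempt.

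As for your proposal itself: what you have written is a strategic outline, not a proof. You correctly identify the two genuine obstacles---the collapse of distinct $\mu$-invariants under the torus identification (so that the involutions of \Cref{def:star_involutions} no longer behave as in the polygon case, and one must separately track which cycles survive the specialization $\theta_i \mapsto \sigma$ or $\theta$), and the need to verify that the cyclic spin structure is preserved under \emph{arbitrary} flip sequences rather than just the three shown. But you do not resolve either one; you explicitly defer them with phrases like ``once these two points are settled.'' The first of these is precisely the content of the conjecture: the polygon proof of \Cref{thm:main} relies on the $\theta_i$ being independent anticommuting variables so that the toggling involutions $\dagger$ and $\ast$ act freely, and it is not at all clear that the recursions of \Cref{lem:lemma4} survive the identification intact. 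Your proposal amounts to a plausible roadmap for attacking the conjecture, but it does not close the gap that makes it a conjecture in the first place.
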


We end with the following questions.

\begin{question} 
    Assuming the validity of \Cref{conj:Markoff}, do the coefficients of the $\varepsilon$'s in super Markoff numbers have compact algebraic formulae analogous to the 
    formula for the $y_m$'s as coefficients of $\varepsilon$'s in super Fibonacci numbers (see \Cref{prop:y_m_combined})?
\end{question}

\begin{question} \label{ques:MSW}
    Going beyond the annlus with one marked point on each boundary, or the once punctured torus, can we find formulae for super $\lambda$-lengths 
    of arcs on unpunctured surfaces (or certain punctured surfaces) that have combinatorial interpretations as double dimer covers of snake graphs, 
    using the same snake graphs interpreting ordinary $\lambda$-lengths of arcs as in \cite{msw_11}?
\end{question}

Before approaching \Cref{ques:MSW}, we note that it is not obvious for every triangulation and every arc that there is a corresponding spin structure (i.e. oriented triangulation relative to the boundary and equivalence on triangles) and flip sequence that preserves positive expansion formulas as one iterates the computation.  Nonetheless, the above combinatorial interpretation would at least give a candidate for what algebraic expressions may be reasonable to try.

\section*{Acknowledgements}

The authors would like to thank the support of the NSF grants DMS-1745638 and DMS-1854162, as well as the University of Minnesota UROP program.  We would also like to thank Bruce Sagan for many helpful conversations and Ralf Schiffler for his questions motivating some of this work.

\bibliographystyle {alpha}
\bibliography {dimers}
\end{document}